\numberwithin{equation}{section}
\newtheorem{lemma}{Lemma}[section]
\newtheorem{proposition}[lemma]{Proposition}
\newtheorem{theorem}{Theorem}[section]
\theoremstyle{remark}
\def\wideubar{\underaccent{{\cc@style\underline{\mskip10mu}}}}
\def\Wideubar{\underaccent{{\cc@style\underline{\mskip8mu}}}}
\def\widebar{\accentset{{\cc@style\underline{\mskip10mu}}}}
\def\Widebar{\accentset{{\cc@style\underline{\mskip8mu}}}}
\newcommand{\lrn}{\left\vert\kern-0.3ex\left\vert\kern-0.3ex\left\vert}
\newcommand{\rrn}{\right\vert\kern-0.3ex\right\vert\kern-0.3ex\right\vert}
\begin{document}
	
	\title{
	\bf  Global existence and optimal decay rate of the classical solution to 3-D Radiative Hydrodynamics model with or without Heat Conductivity}

\author[a]{Guiqiong Gong}
\author[b,c]{Jiawei Zhou}
\author[b,c]{Boran Zhu\thanks{Corresponding author at:School of Mathematics and Statistics, Wuhan University, Wuhan 430072, China.\\
		Email: boran\_zhu@outlook.com(B. Zhu); gqgong@swjtu.edu.cn(G, Gong); zhoujiawei@whu.edu.cn(J. Zhou)}}
\affil[a]{\small School of Mathematics, Southwest Jiaotong University, Chengdu 611756, China }
\affil[b]{\small School of Mathematics and Statistics, Wuhan University, Wuhan 430072, China}
\affil[c]{\small Computational Science Hubei Key Laboratory, Wuhan University, Wuhan 430072, China}
\date{\empty}

\maketitle
	\begin{abstract}
        The classical solution of the 3-D radiative hydrodynamics model is studied in $H^k$-norm under two different conditions, with and without heat conductivity. We have proved the following results in both cases. First, when the $H^k$ norm of the initial perturbation around a constant state is sufficiently small and the integer $k\geq2$, a unique classical solution to such Cauchy problem is shown to exist. Second, if we further assume that the $L^1$ norm of the initial perturbation is small too, the i-order($0\leq i\leq k-2$) derivative of the solutions have the decay rate of $(1+t)^{-\frac 34-\frac i2}$ in $H^2$ norm. Third, from the results above we can see that for radiative hydrodynamics, the radiation can do the same job as the heat conduction, which means if the thermal conductivity coefficient turns to $0$, because of the effect of radiation, the solvability of the system and decay rate of the solution stay the same.  
	
		\vspace{2mm}
		
		\noindent  {\bf Keywords:} Radiation hydrodynamics;  Optimal decay rate;  Spectrum analysis

		\vspace{2mm}
	\end{abstract}
	

	\section{ Introduction}

   The key aim of studying the radiation hydrodynamics is to understand the motion of compressible fluids including radiation effects. The importance of thermal radiation in physical problems increases as the temperature is raised. At moderate temperatures, the role of the radiation is transporting energy by radiative processes. At higher temperatures, the energy and momentum densities of the radiation field may become dominate the corresponding fluid quantities. In this paper, we consider a radiative hydrodynamics model of the compressible Navier-Stokes equations coupled with an elliptic equation for radiative flux, which is used to describe the motion of viscous and heat-conducting fluids with radiative effects.
   This model always can be used to simulate, for instance, nonlinear stellar pulsation, supernova explosions and stellar winds astrophysics and so on. The compressible radiative hydrodynamics system takes the form of
	\begin{equation}
		\label{cns1}
		\left\{	
		\begin{aligned}
		&\rho_t+{\rm div}(\rho \bm{u})=0,\\
		&(\rho \bm{u})_t+{\rm div}(\rho \bm{u}\otimes\bm{u})
		+\nabla P= 2\mu{\rm div} \left(\mathcal{D}\left(\bm{u} \right)\right)+\mu'\nabla{\rm div}\bm{u},\\
		&(\rho E)_t+{\rm div}(\rho \bm{u}E+\bm{u}P)+{\rm div}\bm{q}
		=\kappa\Delta\theta+{\rm div}\left(2\mu \mathcal{D}\left(\bm{u} \right)\cdot\bm{u}+\mu'{\rm div}\bm{u}\ \bm{u}\right),\\
		&-\nabla{\rm div}\bm{q}+\bm{q}+\nabla\left(\theta^4\right)=0,
		\end{aligned}	
		\right.
	\end{equation}
    Here $\rho$, $\theta$, vectors $\bm{u},\ \bm{q}\in\mathbb{R}^3$ are respectively the mass density, absolute temperature, velocity field and radiative heat flux. Constants $\mu, \mu'>0$ denote coefficients of viscosity and $\kappa>0$ is heat conduction. For ideal polytropic gases, the specific total energy $E$, the pressure $P$ and the internal energy $e$ are prescribed as that
	\begin{equation}\label{1.2}
	E=e+\tfrac{1}{2}|\bm{u}|^2,\quad P=R\rho\theta,
	\quad e=C_v\theta,
	\end{equation}
    where $A>0$ and $R>0$ are the specific gas constants, $C_v$ denotes specific heat at constant volume.
	The fluid is considered to be Newtonian, so $\mathcal{D}\left(\bm{u}\right)$ takes the form as 
	\begin{equation}
	\label{VST}
	\mathcal{D}\left(\bm{u}\right)=\frac12\left(\nabla\bm{u}+(\nabla\bm{u})^{\mathsf{T}}\right),
	\end{equation}

	In this paper, we consider the system \eqref{cns1}--\eqref{1.2} in space $(x,t)\in \mathbb{R}^3\times\mathbb{R}^{+}$ with initial data that
	\begin{align} \label{initial0}
	(\rho,\bm{u},\theta)(0,\bm{x})
	=(\rho_0,\bm{u}_0,\theta_0)(x) \quad {\rm for}\  \ \bm{x}\in\mathbb{R}^3,
	\end{align}
    and the far field behavior that
    \begin{align} \label{initial1}
	(\rho,\bm{u},\theta)(t,\bm{x})
	\rightarrow(\bar{\rho},0,\bar{\theta}) \quad {\rm as}\  \ |\bm{x}|\rightarrow 0,t\geq 0,
	\end{align}
    without loss of generality, we set $\bar{\rho}=\bar{\theta}=1$ in the following paper.

     Due to the importance of radiative hydrodynamics in mathematics and physics, there is a lot of literature devoted to the mathematical theory of it. In one-dimensional case, the radiation have a quite good effect on the system. Even if we eliminate the heat conduction and viscosity, unlike the Cauchy problem of one-dimensional compressible Euler equation whose smooth solutions must blow up in finite time, the Cauchy problem of system \eqref{cns1} can still guarantee a unique global smooth solution while the initial date is sufficiently smooth and perturbation is small. The relevant study is abundant, one can refer to \citet{MR1679939,MR2022134}, \citet{MR2820208} for the global existence of classical solutions and \citet{MR4085490} for pointwise structure. About the stability of elementary waves, one can refer to \citet{MR2371479} and \citet{MR2360621} for the shock wave, \cite{MR2836843} for rarefaction wave and \cite{MR2812581} for viscous contact wave. As for the composite of several elementary waves, there are \citet{MR3906863} about two viscous shock waves, \citet{MR3015674} about rarefaction and contact waves, \citet{MR2873128} about viscous contact wave and rarefaction waves. With additional heat conduction and viscosity, \citet{MR3595322} has studied the large-time behavior toward the combination of two rarefaction waves and viscous contact wave. Under this circumstance, large initial perturbation problem can be considered. About the relevant work, we refer to \citet{MR20220425} and \citet{2204.04760}.
     In multi-dimensional case, see \cite{MR2329017} for the global existence of the weak entropy solution, \cite{MR2527538} for pointwise estimates of the global classical solutions, \cite{MR3887119} for the asymptotic stability of planar rarefaction wave, \cite{MR3989198,MR4085632} for radial symmetric classical solutions in an exterior domain and a bounded concentric annular domain, \cite{MR2402882,MR2414408} for decay rates of the planar rarefaction waves. Close to our topics, \citet{MR2815767} obtained the global existence of the classical solution and the decay rate by energy method.
       
     In this paper, we have three main results. First, we improve the work in \cite{MR2815767} in which, the global existence of the classical solution is obtained under the assumption that initial data is small enough in $H^k(k\geq4)$ and the thermal conductivity coefficient $\kappa\neq0$. We obtain the same global existence while $k\geq2$. Furthermore, the same results is proved for the case that $\kappa=0$. Second, with additional assumption that the $L^1$-norm of the initial data is sufficiently small, we obtain the decay rate of the solutions which is the same as that for the corresponding linear system. Such decay rate is known as the optimal decay rate for the nonlinear system. (With further assumptions on the initial data, we can also obtain the lower bound of the decay rate.) The method we use to derive the optimal decay rate improves that in \cite{MR3473451} and \cite{MR3498181}. The strategy in \cite{MR3473451} is not allow us to obtain the optimal decay rate of higher derivatives and that in \cite{MR3498181} although enable us to derive the optimal decay rate for higher older derivatives, we cannot get the optimal decay rate for $k-2$ order derivative, when the initial data belongs to $H^k(k\geq4)$. Because there is no decay in $k$ and $k-1$ order derivatives. We overcome such difficulties by applying inductive method and treat $H^2$ norm of the derivatives as a whole unit to obtain the decay rate of $k$ and $k-1$ order derivatives. The reason why optimal rate for $k-2$ order derivate is so important is that it is the best result we can get because of the application of Duhamel's principle. Third, we can see from the results above that for radiative hydrodynamics, the radiation can do the same job as the heat conduction, which means if the thermal conductivity coefficient turns to $0$, because of the effect of radiation, the solvability of the system and the decay rate of the solutions stay the same. We state our main results in the following Theorems.
\begin{theorem}\label{th1.1}
	If there exists a small enough constant $\epsilon_1>0$ such that the initial data satisfies 
	  \begin{equation*}
	    \|(\rho_0-1,\bm{u}_0,\theta_0-1)\|_{H^2}\leq\epsilon_1,
	  \end{equation*}
    the Cauchy problem \eqref{cns1}--\eqref{initial1} admits a unique global solution $\bm{V}=(\rho,\bm{u},\theta,\bm{q})$. When $\kappa\neq0$, the solution $\bm{V}$ satisfies
	  \begin{eqnarray*}
	    &\qquad\ \  \rho-1\in C\left([0,\infty);H^2\left(\mathbb{R}^3\right)\right)\cap C^1\left([0,\infty);H^1\left(\mathbb{R}^3\right)\right),\\[2mm]
	    &(\bm{u},\theta-1,\bm{q})\in C\left([0,\infty);H^2\left(\mathbb{R}^3\right)\right)\cap C^1\left([0,\infty);L^2\left(\mathbb{R}^3\right)\right),\\[2mm]
	    &\nabla\rho\in L^2\left([0,\infty);H^{1}\left(\mathbb{R}^3\right)\right),\  \nabla(\bm{u},\theta),\bm{q}\in L^2\left([0,\infty);H^2\left(\mathbb{R}^3\right)\right)
	  \end{eqnarray*}
  and
    \begin{eqnarray*}
      &\left\|(\rho-1,\bm{u},\theta-1,\bm{q})(t)\right\|_{H^2}^2
      +\int_{0}^{t}\|\nabla\rho(\tau)\|_{H^{1}}^2
        +\|\nabla(\bm{u},\theta)(\tau)\|_{H^2}^2+\|\bm{q}(\tau)\|_{H^2}^2{\rm d}\tau\\[2mm]
      &\leq C\ \|(\rho_0-1,\bm{u}_0,\theta_0-1)\|_{H^2}^2.
    \end{eqnarray*}
 When $\kappa=0$, the solution $\bm{V}$ satisfies
	  \begin{eqnarray*}
	  	&(\rho-1,\theta-1,\bm{q})\in C\left([0,\infty);H^2\left(\mathbb{R}^3\right)\right)\cap C^1\left([0,\infty);H^1\left(\mathbb{R}^3\right)\right),\\[2mm]
	    &\qquad\qquad\qquad\bm{u}\in C\left([0,\infty);H^2\left(\mathbb{R}^3\right)\right)\cap C^1\left([0,\infty);L^2\left(\mathbb{R}^3\right)\right),\\[2mm]
	    &\nabla(\rho, \theta)\in L^2\left([0,\infty);H^{1}\left(\mathbb{R}^3\right)\right),\  \nabla\bm{u},\bm{q}\in L^2\left([0,\infty);H^2\left(\mathbb{R}^3\right)\right)
      \end{eqnarray*}
 and
 \begin{eqnarray*}
 	&\left\|(\rho-1,\bm{u},\theta-1,\bm{q})(t)\right\|_{H^2}^2
 	+\int_{0}^{t}\|\nabla(\rho, \theta)(\tau)\|_{H^{1}}^2
 	+\|\nabla\bm{u}(\tau)\|_{H^2}^2+\|\bm{q}(\tau)\|_{H^2}^2{\rm d}\tau\\[2mm]
 	&\leq C\ \|(\rho_0-1,\bm{u}_0,\theta_0-1)\|_{H^2}^2.
 \end{eqnarray*}
\end{theorem}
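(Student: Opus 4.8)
The plan is to establish Theorem~\ref{th1.1} by the standard continuation argument: local existence plus uniform-in-time a priori estimates. The main work is the a priori estimates, which I would organize around the \emph{a priori assumption} that $\sup_{[0,T]}\|(\rho-1,\bm u,\theta-1,\bm q)(t)\|_{H^2}\le\delta$ for a small $\delta$ to be determined, and then close it by showing the energy functional is controlled by the (small) initial data alone. Throughout, the radiative-flux equation $-\nabla\operatorname{div}\bm q+\bm q+\nabla(\theta^4)=0$ should be read as an elliptic equation expressing $\bm q$ (and $\operatorname{div}\bm q$) in terms of $\theta$; applying $\operatorname{div}$ gives $(I-\Delta)\operatorname{div}\bm q=-\Delta(\theta^4)$, so $\operatorname{div}\bm q=-\Delta(I-\Delta)^{-1}(\theta^4)$, and in particular $\|\bm q\|_{H^{s}}\lesssim\|\nabla(\theta^4)\|_{H^{s-1}}\lesssim\|\nabla\theta\|_{H^{s-1}}$ on the perturbation level. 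This reduces the fourth equation to a perturbative source term $\operatorname{div}\bm q$ in the energy equation, and shows all the $\bm q$-norms in the statement follow for free from the $\theta$-norms once those are controlled.

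First I would reformulate \eqref{cns1} in perturbation variables $(n,\bm u,\vartheta)=(\rho-1,\bm u,\theta-1)$, writing the system as a symmetrizable hyperbolic--parabolic system (for $\kappa\neq0$) or hyperbolic with partial parabolicity (for $\kappa=0$) plus the elliptic constraint, with quadratic nonlinear remainders. Step~1: the basic $L^2$ estimate, obtained by multiplying the equations by the appropriate multipliers dictated by the symmetrizer $\operatorname{diag}(R/\rho,\rho,\rho C_v/\theta)$; the viscous and heat-conduction terms give $\|\nabla\bm u\|_{L^2}^2$ and (when $\kappa\neq0$) $\kappa\|\nabla\vartheta\|_{L^2}^2$, while the radiative term contributes, after integrating by parts against $\vartheta$, a term comparable to $\|\operatorname{div}\bm q\|_{L^2}\|\nabla\vartheta\|_{L^2}$ which by the elliptic estimate is again $\lesssim\|\nabla\vartheta\|_{L^2}^2$ with the \emph{right sign} — this is precisely the point that radiation mimics heat conduction and is what lets the case $\kappa=0$ go through. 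Step~2: apply $\nabla^\alpha$ for $1\le|\alpha|\le2$, commute through, and repeat, using the a priori smallness to absorb the commutator/nonlinear terms; here the bounds $k\ge2$ and the algebra/Moser inequalities in $H^2(\mathbb R^3)$ (so $H^2\hookrightarrow L^\infty$) are exactly what is needed to control products like $\nabla^\alpha(n\,\operatorname{div}\bm u)$ etc.

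Step~3: the dissipation from Steps~1--2 controls $\|\nabla\bm u\|_{H^2}^2+\|\nabla\vartheta\|_{H^{s}}^2$ ($s=2$ if $\kappa\neq0$, $s=1$ if $\kappa=0$, the loss of one derivative in the $\kappa=0$ case coming from $\operatorname{div}\bm q$ carrying only $\|\nabla\vartheta\|_{H^1}$), but \emph{not} $\nabla n$. To recover dissipation for the density I would use the now-standard trick of testing the momentum equation against $\nabla n$ (equivalently, forming $\frac{d}{dt}\int\rho\bm u\cdot\nabla n$): the pressure term produces $R\|\nabla n\|_{L^2}^2$, the time derivative of $\bm u$ is traded using the continuity equation, and the remaining terms are either already-controlled dissipation or higher order; differentiating once more gives $\|\nabla^2 n\|_{L^2}^2$. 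In the $\kappa=0$ case one also gets $\nabla\vartheta$ dissipation to the needed order from the radiative term directly, so $\|\nabla\vartheta\|_{H^1}$ is available; note the statement only claims $\nabla(\rho,\theta)\in L^2_t H^1$ there, consistent with this. Step~4: combine into a Lyapunov inequality $\frac{d}{dt}\mathcal E(t)+c\mathcal D(t)\le 0$ with $\mathcal E\sim\|(n,\bm u,\vartheta,\bm q)\|_{H^2}^2$, integrate in time, close the a priori assumption, and invoke local existence + continuation. The regularity-class assertions ($C([0,\infty);H^2)\cap C^1(\cdots)$) follow from the equations once the energy bound is in hand — e.g. $n_t=-\operatorname{div}(\rho\bm u)$ lands in $H^1$, and in the $\kappa=0$ case $\vartheta_t$ picks up $\operatorname{div}\bm q\in H^1$ giving $\vartheta-1\in C^1([0,\infty);H^1)$, whereas for $\kappa\neq0$ the term $\kappa\Delta\vartheta$ costs two derivatives so only $\vartheta-1\in C^1([0,\infty);L^2)$, matching the stated dichotomy.

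The main obstacle is Step~3 in the case $\kappa=0$: without heat conduction one must verify that the elliptic radiative term alone supplies enough $\vartheta$-dissipation (and the correct number of derivatives) to close the estimate, and simultaneously that the $\nabla n$--$\nabla\vartheta$ coupling in the momentum/energy equations does not destroy the sign when one cross-tests. Concretely, the term $\nabla P=R(\nabla n+\vartheta\nabla n+n\nabla\vartheta+\nabla\vartheta)$ links density and temperature, so recovering $\|\nabla n\|_{H^1}^2$ via the $\int\rho\bm u\cdot\nabla n$ functional generates a $\|\nabla\vartheta\|_{H^1}\|\nabla n\|_{H^1}$ cross term that must be absorbed by Young's inequality into the $\vartheta$-dissipation — which is only of order $H^1$ when $\kappa=0$, so the bookkeeping is tight and is the crux of why the theorem requires exactly this combination of norms.
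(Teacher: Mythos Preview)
Your proposal is correct and follows essentially the same route as the paper: reformulate in perturbation variables, combine local existence with a uniform a~priori estimate via the continuation argument, and obtain the a~priori estimate by (i) the basic $L^2$ energy identity, (ii) its differentiated versions for $1\le|\alpha|\le2$, (iii) the cross functional $\langle\partial_{\bm x}^{\alpha}\bm u,\nabla\partial_{\bm x}^{\alpha}\mathcal n\rangle$ to recover density dissipation, and (iv) assembly into a Lyapunov inequality, with the $\bm q$--bounds read off from the elliptic radiative equation. The one tactical difference is that you propose to \emph{eliminate} $\bm q$ through $(I-\Delta)\operatorname{div}\bm q=-\Delta(\theta^4)$ and extract the $\vartheta$--dissipation from the resulting nonlocal term in the energy equation, whereas the paper keeps $\bm q$ as an unknown and tests the fourth equation of the system against $\partial_{\bm x}^{\alpha}\bm q$ (yielding $\|\partial_{\bm x}^{\alpha}\bm q\|^2+\|\partial_{\bm x}^{\alpha}\operatorname{div}\bm q\|^2$) and, in the $\kappa=0$ case, additionally against $\nabla\partial_{\bm x}^{\alpha}\mathcal m$ (yielding $\|\nabla\partial_{\bm x}^{\alpha}\mathcal m\|^2$ modulo already-controlled terms); the two devices are equivalent, the paper's version simply avoiding the operator $(I-\Delta)^{-1}$ in the energy estimates.
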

\begin{theorem}\label{nablak1}
	If there exists a small enough constant $\epsilon_2>0$ such that the initial data satisfies  
	\begin{equation*}
	\|(\rho_0-1,\bm{u}_0,\theta_0-1)\|_{L^1\cap H^{k+2}}\leq\epsilon_2,\qquad(\ \text{for any integer}\ k\geq0) 
	\end{equation*}
	then there exists a positive constant $C$ such that
	\begin{align*}
		&\|\nabla^{i}(\rho-1,\bm{u}_0,\theta-1)(t)\|_2\leq C(1+t)^{-\frac 34-\frac i2}\|(\rho_0-1,\bm{u}_0,\theta_0-1)\|_{L^1\cap H^{i+2}}\quad(0\leq i\leq k),\\
		&\|\nabla^j\bm{q}\|\leq C\|\nabla^{j+1}\theta\|\quad(0\leq j\leq k+1),\qquad\text{for any $t\geq0$}.
	\end{align*}
\end{theorem}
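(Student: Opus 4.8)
The plan is to interlock three ingredients — sharp $L^{1}$–$L^{2}$ decay estimates for the semigroup of the linearized system (obtained by spectral analysis), Duhamel's principle for the nonlinear system, and the uniform energy estimates of Theorem~\ref{th1.1} re-run at the regularity level $H^{k+2}$ — and to glue them together by an induction on the differentiation order $i$, in which the decay already established for $\nabla^{j}\bm W$ with $j<i$ controls the nonlinear term at order $\nabla^{i}$ and thereby drives a time-weighted energy inequality. As a reduction I would slave $\bm q$ to $\theta$: on gradient fields $-\nabla\,{\rm div}=-\Delta$, so the last equation of \eqref{cns1} is $\bm q=-(I-\Delta)^{-1}\nabla(\theta^{4})$, and substituting this into the energy equation leaves $\bm W:=(\rho-1,\bm u,\theta-1)$ solving $\partial_{t}\bm W=\mathcal{A}\bm W+\bm N(\bm W)$, where $\mathcal{A}$ is the linearized operator and every term of $\bm N(\bm W)$ is at least quadratic in $\bm W$ and carries at most two derivatives of $\bm W$ (the two-derivative part being of the type $\bm W\,\nabla^{2}\bm W$, coming from $\tfrac{\mu}{\rho}\Delta\bm u$ and the like). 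Crucially, the cases $\kappa\neq0$ and $\kappa=0$ are then treated identically: the symbol of $-{\rm div}\,(I-\Delta)^{-1}\nabla$ equals $|\xi|^{2}/(1+|\xi|^{2})\asymp|\xi|^{2}$ near $\xi=0$, so the radiative flux always supplies, at low frequency, precisely the parabolic dissipation of $\theta$ that heat conduction would otherwise provide, while at high frequency the viscous dissipation of $\bm u$ together with the coupling already confine the spectrum of $\widehat{\mathcal{A}}(\xi)$ to a left half-plane.

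First I would analyse the eigenvalues of $\widehat{\mathcal{A}}(\xi)$: a Taylor expansion for $|\xi|\le r_{0}$ should give ${\rm Re}\,\lambda_{\ell}(\xi)\le-c|\xi|^{2}$, and a separate argument for $|\xi|\ge r_{0}$ gives ${\rm Re}\,\lambda_{\ell}(\xi)\le-c_{0}<0$; writing $(\cdot)_{\rm low}$ for the restriction to $|\xi|\le r_{0}$ and splitting the inverse Fourier integral there yields, for all $i\ge m\ge0$,
\begin{align*}
\|\nabla^{i}e^{t\mathcal{A}}f\|_{L^{2}}&\le C(1+t)^{-\frac34-\frac i2}\|f\|_{L^{1}}+Ce^{-c_{0}t}\|\nabla^{i}f\|_{L^{2}},\\
\|(\nabla^{i}e^{t\mathcal{A}}f)_{\rm low}\|_{L^{2}}&\le C(1+t)^{-\frac34-\frac{i-m}{2}}\|\nabla^{m}f\|_{L^{1}}.
\end{align*}
In parallel I would re-run the energy method of Theorem~\ref{th1.1} at level $H^{k+2}$ to get $\sup_{t}\|\bm W(t)\|_{H^{k+2}}^{2}+\int_{0}^{\infty}\mathcal{D}_{k+2}(\tau)\,{\rm d}\tau\le C\|\bm W_{0}\|_{H^{k+2}}^{2}$, and, for each $0\le i\le k$, a ``banded'' refinement: a functional $\mathcal{E}_{i}(t)\sim\|\nabla^{i}\bm W(t)\|_{H^{2}}^{2}$, built from the derivatives of orders $i,i+1,i+2$ together with the Matsumura--Nishida cross terms ($\int\nabla^{i}\bm u\cdot\nabla^{i+1}\rho$, and — when $\kappa=0$ — also $\int\nabla^{i}\bm u\cdot\nabla^{i+1}\theta$) needed to recover the missing density/temperature dissipation, with dissipation $\mathcal{D}_{i}$ controlling the $\nabla^{i+1},\nabla^{i+2}$ levels of $\bm W$, such that — after absorbing the cubic terms by the smallness of $\|\bm W\|_{H^{2}}$, using $\|\nabla^{j}\bm q\|\lesssim\|\nabla^{j+1}\theta\|$, and diverting the high-frequency part of the residual $\nabla^{i}$ term into $\mathcal{D}_{i}$ —
\begin{equation*}
\tfrac{{\rm d}}{{\rm d}t}\mathcal{E}_{i}(t)+c\,\mathcal{E}_{i}(t)\le C\,\|(\nabla^{i}\bm W(t))_{\rm low}\|_{L^{2}}^{2} .
\end{equation*}

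The induction proceeds as follows. For $i=0,1,2$ one feeds the linear decay into $\bm W(t)=e^{t\mathcal{A}}\bm W_{0}+\int_{0}^{t}e^{(t-s)\mathcal{A}}\bm N(\bm W(s))\,{\rm d}s$, uses $\|\bm N(\bm W)\|_{L^{1}}\lesssim\|\bm W\|_{L^{2}}\|\bm W\|_{H^{2}}$ and $\|\nabla^{i}\bm N(\bm W)\|_{L^{2}}\lesssim\|\bm W\|_{H^{2}}\|\bm W\|_{H^{i+2}}$, and closes a continuity argument on $\sup_{s\le t}\sum_{0\le j\le2}(1+s)^{3/4+j/2}\|\nabla^{j}\bm W(s)\|_{H^{2}}$, the $H^{2}$ parts being upgraded from the $L^{2}$ parts through the banded inequalities with $i=0,1,2$ and Grönwall; this delivers $\|\nabla^{j}\bm W(t)\|_{H^{2}}\lesssim(1+t)^{-3/4-j/2}$ for $j\le2$. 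Now assume $\|\nabla^{j}\bm W(t)\|_{H^{2}}\lesssim(1+t)^{-3/4-j/2}\|\bm W_{0}\|_{L^{1}\cap H^{j+2}}$ for all $j<i$; the heart of the step is to estimate $\|(\nabla^{i}\bm W(t))_{\rm low}\|_{L^{2}}$ from Duhamel by splitting at $s=t/2$ — on $[0,t/2]$ keeping all derivatives on the semigroup and using $\|\bm N(\bm W(s))\|_{L^{1}}\lesssim(1+s)^{-2}$ (a consequence of the cases $j\le2$), and on $[t/2,t]$ transferring $i-2$ derivatives onto $\bm N$ and estimating $\|\nabla^{i-2}\bm N(\bm W(s))\|_{L^{1}}$ factor-by-factor with the induction hypothesis, the only borderline product $\bm W\,\nabla^{i}\bm W$ carrying the small factor $\|\bm W\|_{L^{2}}\lesssim\delta(1+s)^{-3/4}$ and so being absorbed. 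This gives $\|(\nabla^{i}\bm W(t))_{\rm low}\|_{L^{2}}\lesssim(1+t)^{-3/4-i/2}\|\bm W_{0}\|_{L^{1}\cap H^{i+2}}$; inserting it into the banded inequality at level $i$ and using Grönwall yields $\mathcal{E}_{i}(t)\lesssim(1+t)^{-3/2-i}$, i.e. $\|\nabla^{i}\bm W(t)\|_{H^{2}}\lesssim(1+t)^{-3/4-i/2}$, which closes the step and — since $\|\nabla^{i}\bm W\|_{H^{2}}$ dominates $\|\nabla^{i+2}\bm W\|_{L^{2}}$ — establishes the stated bound for all $0\le i\le k$.

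The step I expect to be the real obstacle is exactly the one flagged in the introduction: the loss of decay at the very top. For $i\ge k+1$ the nonlinear Duhamel term at order $\nabla^{i}$ is forced to pair the top norm $\|\bm W\|_{H^{k+2}}$ — which is only bounded, never shown to decay — against a merely $(1+s)^{-3/4}$ factor and so cannot beat $(1+t)^{-3/4}$; the induction must therefore stop at $i=k$, and only the suboptimal rate $(1+t)^{-3/4-k/2}$ is available for $\nabla^{k+1}$ and $\nabla^{k+2}$. Treating $\|\nabla^{i}\bm W\|_{H^{2}}$ as a single unit for $0\le i\le k$ — rather than demanding the individually optimal rate at each order — is precisely the device that keeps the banded energy inequality self-contained while still producing the optimal $(1+t)^{-3/4-i/2}$ for every $i\le k$; this is in any case the best one can expect, the loss of two orders of regularity being intrinsic to Duhamel's principle. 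Finally, the bound $\|\nabla^{j}\bm q\|\le C\|\nabla^{j+1}\theta\|$ for $0\le j\le k+1$ is immediate from $\bm q=-(I-\Delta)^{-1}\nabla(\theta^{4})$: in Fourier variables $|\widehat{\nabla^{j}\bm q}(\xi)|\le\frac{|\xi|^{j+1}}{1+|\xi|^{2}}\,|\widehat{\theta^{4}-1}(\xi)|\le|\xi|^{j+1}|\widehat{\theta^{4}-1}(\xi)|$, whence $\|\nabla^{j}\bm q\|\le\|\nabla^{j+1}(\theta^{4}-1)\|$, and the Gagliardo--Nirenberg product estimate $\|\nabla^{j+1}(\theta^{4}-1)\|\le C(1+\|\theta-1\|_{L^{\infty}})\|\nabla^{j+1}(\theta-1)\|$ — valid since $\theta^{4}-1=4(\theta-1)+O((\theta-1)^{2})$ and the perturbation is small — completes the proof.
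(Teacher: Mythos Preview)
Your proposal is correct and follows essentially the same route as the paper. Both arguments rest on the same three pillars --- spectral/semigroup decay for the linearized system, a ``banded'' energy functional $\mathcal{E}_i\sim\|\nabla^i\bm W\|_{H^2}^2$ (with the Matsumura--Nishida cross term) satisfying a differential inequality that Gr\"onwall converts into decay once the source term is controlled, and Duhamel with a $t/2$ split to control that source term inductively --- and both stop the induction at $i=k$ for the reason you identify. The cosmetic differences are: (i) the paper writes the banded inequality as $\frac{{\rm d}}{{\rm d}t}F_i+F_i\le\|\nabla^i\bm W\|^2$ with the full $L^2$ norm on the right (obtained simply by adding $\|\nabla^i\bm W\|^2$ to both sides), whereas you isolate only the low-frequency part; (ii) on $[t/2,t]$ the paper uses the $L^2$--$L^2$ semigroup bound $\|\nabla^{i+1}e^{t\mathcal{A}}f\|\lesssim(1+t)^{-3/2}(\|\nabla^{i-2}f\|+\|\nabla^{i+1}f\|)$ rather than your $L^1$--$L^2$ version; (iii) the paper packages the continuity argument via the quadratic inequality $G_k\lesssim\text{data}+G_k^2$ and closes it with the Strauss lemma; and (iv) when $\kappa=0$ the paper does \emph{not} introduce a $\langle\bm u,\nabla\theta\rangle$ cross term but instead extracts dissipation of $\nabla\mathcal{m}$ directly from the radiative equation $\eqref{cns2}_4$ (see \eqref{m1}--\eqref{m2}). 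None of these alters the architecture of the proof.
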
	
\begin{theorem}\label{updown0}
	 If there exists a small enough constant $\epsilon_3>0$ such that the initial data satisfies  
	 \begin{align*}
	 &\|(\rho_0-1,\bm{u}_0,\theta_0-1)\|_{L^1\cap H^{k}}\leq\epsilon_3,\quad(\ \text{for any integer}\ k\geq2)\\[1.5mm]
	 &C'\|\rho_0-1\|_{L^1}^2-c_0'\|(\bm{u}_0,\theta_0-1)\|_{L^1}^2>C\epsilon_3^2 
	 \end{align*}
	 where constants $C'$ and $c'$ shall be determined in Proposition \ref{Pro9}. Then there exist two positive constants $a'_1,a'_2$ such that the solutions $(\rho,\bm{u},\theta,\bm{q})$ satisfy
	\begin{equation*}
	a'_1(1+t)^{-\frac{3}{4}}\leq \|(\rho-1,\bm{u},\theta-1,{\rm div}\bm{q})(t)\|\leq a'_2(1+t)^{-\frac{3}{4}},\qquad \text{for any $t\geq0$}.
	\end{equation*}
\end{theorem}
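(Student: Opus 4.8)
\emph{Plan of proof.} The idea is to measure $\bm V=(\rho-1,\bm u,\theta-1,\operatorname{div}\bm q)$ against the linearized evolution. Writing $\bm W=(\rho-1,\bm u,\theta-1)$, the first three equations of \eqref{cns1}, after eliminating $\bm q$ through the elliptic relation $-\nabla\operatorname{div}\bm q+\bm q+\nabla(\theta^4)=0$, take the form $\bm W_t=\mathcal L\bm W+\bm N(\bm W)$, where $\mathcal L$ is the linearization about the constant state (a constant-coefficient differential operator plus a bounded nonlocal radiative multiplier, which at low frequency acts like an extra heat diffusion) and $\bm N$ collects the quadratic-and-higher terms, several of them of divergence form. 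Duhamel's formula then gives
\begin{equation*}
\bm W(t)=e^{t\mathcal L}\bm W_0+\int_0^t e^{(t-s)\mathcal L}\bm N(\bm W(s))\,{\rm d}s .
\end{equation*}
The three ingredients are: (i) the upper bound, read off directly from Theorem \ref{nablak1}; (ii) a bound $C\epsilon_3^2(1+t)^{-3/4}$ for the Duhamel integral; and (iii) a matching lower bound $c_\star(1+t)^{-3/4}$ for $\|e^{t\mathcal L}\bm W_0\|$ coming from low-frequency spectral analysis, the initial-data condition being precisely what forces $c_\star>C\epsilon_3^2$, so that the subtraction leaves a positive principal term.

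\emph{Upper bound.} Theorem \ref{nablak1} with $i=0$ — which only needs $L^1\cap H^2$ data, hence is available here — gives $\|\bm W(t)\|\le C(1+t)^{-3/4}$. Applying $\operatorname{div}$ to the elliptic equation yields $\operatorname{div}\bm q=-(I-\Delta)^{-1}\Delta(\theta^4-1)$, so $\|\operatorname{div}\bm q(t)\|\le\|\theta^4-1\|\le C\|(\theta-1)(t)\|\le C(1+t)^{-3/4}$. Adding these gives $\|\bm V(t)\|\le a_2'(1+t)^{-3/4}$.

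\emph{Lower bound.} First estimate the Duhamel term. Since $\bm N$ is genuinely quadratic in $(\bm W,\nabla\bm W)$, the decay estimates of Theorem \ref{nablak1} together with the uniform $H^2$-bound of Theorem \ref{th1.1} control $\|\bm N(\bm W(s))\|_{L^1}$ by an integrable power of $(1+s)$ times $\epsilon_3^2$, and $\|\bm N(\bm W(s))\|_{H^1}$ by $\epsilon_3^2(1+s)^{-\alpha}$ for a suitable $\alpha$; splitting $\int_0^t=\int_0^{t/2}+\int_{t/2}^t$ and using the linear decay $\|e^{\tau\mathcal L}f\|\le C(1+\tau)^{-3/4}\|f\|_{L^1}+Ce^{-c\tau}\|f\|_{H^1}$ (with the extra $|\xi|$-factor on divergence-form data) yields $\bigl\|\int_0^t e^{(t-s)\mathcal L}\bm N(\bm W(s))\,{\rm d}s\bigr\|\le C\epsilon_3^2(1+t)^{-3/4}$. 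It remains to bound $\|e^{t\mathcal L}\bm W_0\|$ from below. On the Fourier side $\widehat{e^{t\mathcal L}\bm W_0}(\xi)=e^{t\mathcal A(\xi)}\widehat{\bm W_0}(\xi)$; one expands the eigenvalues and eigenprojections of the small matrix $\mathcal A(\xi)$ as $|\xi|\to0$. The slowest branch has $\operatorname{Re}\lambda(\xi)\sim-\nu|\xi|^2$, and its eigenprojection feeds the density datum with a coefficient of order one and the velocity and temperature data with coefficients of order $|\xi|$. Restricting the Plancherel integral to $|\xi|\le r_0$, using that $\widehat{\bm W_0}$ is continuous with $\widehat{\bm W_0}(0)=\int_{\mathbb{R}^3}\bm W_0\,{\rm d}x$ (as $\bm W_0\in L^1$), and rescaling $\int_{|\xi|\le r_0}e^{-2\nu|\xi|^2 t}\,{\rm d}\xi\sim(1+t)^{-3/2}$ produces the lower bound of Proposition \ref{Pro9},
\begin{equation*}
\|e^{t\mathcal L}\bm W_0\|\ \ge\ \bigl(C'\|\rho_0-1\|_{L^1}^2-c_0'\|(\bm u_0,\theta_0-1)\|_{L^1}^2\bigr)^{1/2}(1+t)^{-3/4},
\end{equation*}
which holds verbatim when $\kappa=0$ (only the eigenvalue expansions change, the temperature dissipation then being entirely radiation-induced). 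Combining the three facts,
\begin{equation*}
\|\bm V(t)\|\ \ge\ \|\bm W(t)\|\ \ge\ \Bigl(\bigl(C'\|\rho_0-1\|_{L^1}^2-c_0'\|(\bm u_0,\theta_0-1)\|_{L^1}^2\bigr)^{1/2}-C\epsilon_3^2\Bigr)(1+t)^{-3/4},
\end{equation*}
and the hypothesis $C'\|\rho_0-1\|_{L^1}^2-c_0'\|(\bm u_0,\theta_0-1)\|_{L^1}^2>C\epsilon_3^2$ makes the right-hand coefficient positive for $\epsilon_3$ small, which is the lower bound $a_1'(1+t)^{-3/4}$. (On a bounded interval $0\le t\le t_0$ one argues separately: the hypothesis forces $\rho_0\not\equiv1$, hence $\bm W_0\not\equiv0$; by the uniqueness in Theorem \ref{th1.1}, $\|\bm W(t)\|$ cannot vanish — otherwise the solution would coincide with the constant state thereafter, contradicting the lower bound already proved for large $t$ — so $\inf_{[0,t_0]}\|\bm W(t)\|>0$ by continuity, and $a_1'$ is taken as the minimum of the two contributions.)

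\emph{Main obstacle.} The hard part is the low-frequency spectral analysis behind Proposition \ref{Pro9}: computing the asymptotics as $\xi\to0$ of every eigenvalue and eigenprojection of $\mathcal A(\xi)$ — including the nonlocal radiative contribution, and separately in the degenerate case $\kappa=0$ where the temperature equation carries no intrinsic parabolicity — identifying the slow acoustic/thermal branch, and tracking precisely which components of $\widehat{\bm W_0}(0)$ enter its leading coefficient and with which powers of $|\xi|$, so that the errors from replacing $\widehat{\bm W_0}(\xi)$ by $\widehat{\bm W_0}(0)$ and from the faster-decaying branches are genuinely of lower order; this is exactly what produces the signed combination $C'\|\rho_0-1\|_{L^1}^2-c_0'\|(\bm u_0,\theta_0-1)\|_{L^1}^2$. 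The only other point needing care is the Duhamel bookkeeping — ensuring the nonlinear remainder decays at the sharp rate $(1+t)^{-3/4}$ rather than slower at the lowest admissible regularity — but this is routine once Theorems \ref{th1.1} and \ref{nablak1} are available.
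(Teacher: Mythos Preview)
Your proposal is correct and follows essentially the same route as the paper: Duhamel's formula for $\bm W=(\rho-1,\bm u,\theta-1)$, the linear lower bound of Proposition~\ref{Pro9} for $e^{t\mathcal L}\bm W_0$, the quadratic estimate $\|\bm N(\bm W(s))\|_{L^1\cap L^2}\lesssim\epsilon_3^2(1+s)^{-3/2}$ on the nonlinear source (using the decay from Theorem~\ref{nablak1}), and the convolution bound $\int_0^t(1+t-s)^{-3/4}(1+s)^{-3/2}\,{\rm d}s\lesssim(1+t)^{-3/4}$; the $\operatorname{div}\bm q$ component is then recovered from the elliptic relation exactly as you do. The only addition in your write-up is the explicit small-time argument for $0\le t\le t_0$, which the paper passes over.
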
	

In this paper, due to the complexity of the model, we can not get an explicit expression of Green's function for the semigroup of linearized system, which makes it difficult to analyze the spectral and establish the optimal time decay rates. In order to overcome this difficult, according to \cite{MR2164944}, we have to analyze and decompose carefully the semigroup of the corresponding linearized system in order to deal with the system \eqref{cns1}.
To this end, we first analyze the property of $e^{t\bm{A}(\bm{\xi})}$ at both lower frequency and higher frequency respectively. And on that basis, then we analyze the asymptotical expansions of $\lambda_i$, $P_i (i=1,2,3,4)$  and $e^{t\bm{A}(\bm{\xi})}$ for both lower and higher frequencies, which plays a very important role in our later analysis.

The rest of this paper is arranged as follows. We obtain the global existence of the classical solutions to 3-D radiative hydrodynamics in section 2. In section 3, we shall perform the spectral analysis on the linearized system. Finally, the optimal decay rate of the nonlinear system will be derived in section 4.

\noindent \textbf{Notation.} For notational simplicity, throughout this paper, $L^q(\mathbb{R}^3)(1\leq q\leq \infty)$ stands for the usual Lebesgue space
on $\mathbb{R}^3$
with norm $\|\cdot\|_{L^q}$ and $H^k(\mathbb{R}^3)(k\in \mathbb{N})$ the usual Sobolev space in the $L^2$ sense with norm $\|\cdot\|_k$.
For simplicity, we introduce $\|\cdot\|=\|\cdot\|_{L^2(\mathbb{R}^3)}$, $\|(\ \cdot\ ,\ \cdot\ ,\ \cdot\ )\|_{L^q(\mathbb{R}^3)}=\|\cdot\|_{L^q(\mathbb{R}^3)}+\|\cdot\|_{L^q(\mathbb{R}^3)}+\|\cdot\|_{L^q(\mathbb{R}^3)}$ and $\lrn\cdot\rrn=\|\cdot\|_{L^{\infty}(\mathbb{R}^3)}$. The inner-product in $L^2(\mathbb{R}^3)$ is denoted by $\langle\ ,\ \rangle$. We denote by $C(I;H^p(\mathbb{R}^3))$ the space of continuous
functions on the interval $I$ with values in $H^p(\mathbb{R}^3)$ and $L^2(I;H^p(\mathbb{R}^3))$ the space of $L^2$-functions on $I$
with values in $H^p(\mathbb{R}^3)$. We introduce
  $A\lesssim B$ if $A\leq C B$ holds uniformly for some positive constant $C$ and similarly for $A\gtrsim B$.

\section{Global existence}	
\subsection{Reformulation of the problem}
We assume that the equilibrium state of the radiation hydrodynamic model \eqref{cns1} is trivial, taking the form of that
\begin{equation}
\bar{\rho}=1,\quad\bar{\bm{u}}=0,\quad\bar{\theta}=1,\quad\bar{\bm{q}}=0.
\end{equation}
Let $\mathcal{n}=\rho-1,\ \mathcal{m}=\theta-1.$ Then $\left[\mathcal{n},\bm{u},\mathcal{m},\bm{q}\right]$ satisfies that
\begin{equation}\label{cns2}
\left\{
\begin{aligned}
&\mathcal{n}_t+{\rm div}\bm{u}={\rm S_1},\\[4mm]
&\bm{u}_t+R\nabla\mathcal{n}+R\nabla\mathcal{m}-\mu\Delta\bm{u}-(\mu+\mu')\nabla{\rm div}\bm{u}={\rm S_2},\\[4mm]
&C_v\mathcal{m}_t+R{\rm div}\bm{u}+{\rm div}q-\kappa\Delta\mathcal{m}={\rm S_3},\\[4mm]
&-\frac{1}{4}\nabla{\rm div}\bm{q}+\frac 1 4\bm{q}+\nabla\mathcal{m}={\rm S_4},
\end{aligned}
\right.
\end{equation}
where
\begin{equation}
\left\{
\begin{aligned}
{\rm S_1}=&-\mathcal{n}{\rm div}\bm{u}-\bm{u}\cdot\nabla\mathcal{n},\\[4mm]
{\rm S_2}=&\ -\bm{u}\cdot\nabla\bm{u}-\frac{\mathcal{n}}{\mathcal{n}+1}\left[\mu\Delta\bm{u}+\left(\mu+\mu'\right)\nabla{\rm div}\bm{u}\right]+R\left(1-\frac{\mathcal{m}+1}{\mathcal{n}+1}\right)\nabla\mathcal{n},\\[3mm]
{\rm S_3}=&-C_v\bm{u}\cdot\nabla\mathcal{m}-R\mathcal{m}{\rm div}\bm{u}-\kappa\frac{\mathcal{n}}{\mathcal{n}+1}\Delta\mathcal{m}+\frac{2\mu}{\mathcal{n}+1}\mathcal{D}(\bm{u}):\mathcal{D}(\bm{u})+\\
&\ \frac{\mu'}{\mathcal{n}+1}\left({\rm div}\bm{u}\right)^2+\frac{\mathcal{n}}{\mathcal{n}+1}{{\rm div}\bm{q}},\\[3mm]
{\rm S_4}=&-\frac 3 2\nabla(\mathcal{m}^2)-\nabla(\mathcal{m}^3)-\frac 1 4\nabla(\mathcal{m}^4).
\end{aligned}
\right.
\end{equation}

The corresponding initial data is given by that
\begin{align} \label{initial2}
(\mathcal{n},\bm{u},\mathcal{m})(0,\bm{x})
=(\mathcal{n}_0,\bm{u}_0,\mathcal{m}_0)(\bm{x}) \quad {\rm for}\  \ \bm{x}\in\mathbb{R}^3.
\end{align}

We need to derive the a priori estimates of the solutions $(\mathcal{n},\bm{u},\mathcal{m},\bm{q})\in X(0,T;N)$ ($T>0$, $N>0$) to the initial boundary value problem \eqref{cns2}--\eqref{initial2}.
Here we define the sets
\begin{equation*}
\begin{split}
X_1(t_1,t_2 ;N):=\Big\{(\mathcal{n},\bm{u},\mathcal{m},\bm{q})\Big|\ &\mathcal{E}_1(t_1,t_2)\leq N^2;\ 
\mathcal{n}\in C\left([0,\infty);H^2\left(\mathbb{R}^3\right)\right)\cap C^1\left([0,\infty);H^1\left(\mathbb{R}^3\right)\right),\\[2mm]
&(\bm{u},\mathcal{m},\bm{q})\in C\left([0,\infty);H^2\left(\mathbb{R}^3\right)\right)\cap C^1\left([0,\infty);L^2\left(\mathbb{R}^3\right)\right),\\[2mm]
&\nabla\mathcal{n}\in L^2\left([0,\infty);H^{1}\left(\mathbb{R}^3\right)\right),\  \nabla(\bm{u},\mathcal{m}),\bm{q}\in L^2\left([0,\infty);H^2\left(\mathbb{R}^3\right)\right)\ \Big\},
\end{split}
\end{equation*}
\begin{equation*}
\begin{split}
X_2(t_1,t_2 ;N):=\Big\{(\mathcal{n},\bm{u},\mathcal{m},\bm{q})\Big|\ &\mathcal{E}_2(t_1,t_2)\leq N^2;\ 
\bm{u}\in C\left([0,\infty);H^2\left(\mathbb{R}^3\right)\right)\cap C^1\left([0,\infty);L^2\left(\mathbb{R}^3\right)\right),\\[2mm]
&(\mathcal{n},\mathcal{m},\bm{q})\in C\left([0,\infty);H^2\left(\mathbb{R}^3\right)\right)\cap C^1\left([0,\infty);L^2\left(\mathbb{R}^3\right)\right),\\[2mm]
&\nabla(\mathcal{n},\mathcal{m})\in L^2\left([0,\infty);H^{1}\left(\mathbb{R}^3\right)\right),\  \nabla\bm{u},\bm{q}\in L^2\left([0,\infty);H^2\left(\mathbb{R}^3\right)\right)\ \Big\},
\end{split}
\end{equation*}
for constants $N$, $t_1$, and $t_2$ ($t_1\leq t_2$),  where
\begin{align*} 
\mathcal{E}_1(t_1,t_2):=\sup_{t\in[t_1,t_2]}
\|(\mathcal{n},\bm{u},\mathcal{m},\bm{q})(t)\|_2^2
+\int_{t_1}^{t_2}\left(\|\nabla\mathcal{n}(\tau)\|_1^2
+\|\nabla(\bm{u},\mathcal{m})(\tau)\|_2^2+\|\bm{q}(\tau)\|^2_2\right)\mathrm{d}s,
\end{align*}
\begin{align*} 
\mathcal{E}_2(t_1,t_2):=\sup_{t\in[t_1,t_2]}
\|(\mathcal{n},\bm{u},\mathcal{m},\bm{q}(t)\|_2^2
+\int_{t_1}^{t_2}\left(\|\nabla(\mathcal{n}, \mathcal{m})(\tau)\|_1^2
+\|\nabla\bm{u}(\tau)\|_2^2+\|\bm{q}(\tau)\|^2_2\right)\mathrm{d}s.
\end{align*}
As usual, the global existence of solution to system \eqref{cns2} can be obtained by combining the local existence result with the a priori estimates.
\begin{proposition}[Local existence]\label{existence}
	If the initial data $(\mathcal{n}_0,\bm{u}_0,\mathcal{m}_0)\in H^2(\mathbb{R}^3)$ satisfies that
	  \begin{equation*}
	   \inf_{\bm{x}\in\mathbb{R}^3}\{\mathcal{n}_0(\bm{x})+1\}>0,
	  \end{equation*}
there exists a positive constant $T_0$ depending on $\mathcal{E}_i(0,0)$ such that the initial value problem \eqref{cns2} has a unique solution $(\mathcal{n},\bm{u},\mathcal{m},\bm{q})\in X_i(0,T_0;2N)$ which satisfies
	  \begin{equation*}
          \inf_{\bm{x}\in\mathbb{R}^3,0\leq t\leq T_0}\{\mathcal{n}(t,\bm{x})+1\}>0.
      \end{equation*}
$(i=1\  \text{as} \ \kappa\neq0;\ i=2 \ \text{as}\ \kappa=0)$
\end{proposition}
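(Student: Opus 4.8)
The plan is the classical one: build approximate solutions by iterating linear problems, establish a uniform bound on a short time interval, extract a limit by compactness, and prove uniqueness by a difference estimate. The structural observation that makes this routine is that the fourth equation in \eqref{cns2} is an \emph{elliptic} problem for $\bm{q}$: writing it as $\tfrac14\bigl(I-\nabla\,\mathrm{div}\bigr)\bm{q}=-\nabla\mathcal{m}+\mathrm{S}_4$ and noting that $I-\nabla\,\mathrm{div}$ is an isomorphism on every $H^s(\mathbb{R}^3)$ (in Fourier variables it is multiplication by the positive-definite matrix $I+\xi\xi^{\mathsf{T}}$), one may solve $\bm{q}=\bm{q}[\mathcal{m},\mathrm{S}_4]$ with $\|\bm{q}\|_{H^s}\lesssim\|\nabla\mathcal{m}\|_{H^{s-1}}+\|\mathrm{S}_4\|_{H^{s-1}}$, and moreover $\mathrm{div}\,\bm{q}=-4(I-\Delta)^{-1}\Delta\,\mathcal{m}+4(I-\Delta)^{-1}\mathrm{div}\,\mathrm{S}_4$, in which $(I-\Delta)^{-1}\Delta$ is of \emph{order zero} and, since $\mathrm{S}_4$ is a quadratic gradient in $\mathcal{m}$, so is the second term. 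Substituting this into the third equation closes the system in $(\mathcal{n},\bm{u},\mathcal{m})$ as a symmetrizable hyperbolic--parabolic system: $\mathcal{n}$---together with $\mathcal{m}$ when $\kappa=0$---satisfies a transport-type equation, while $\bm{u}$---together with $\mathcal{m}$ when $\kappa\neq0$---satisfies a uniformly parabolic equation (the radiative term only adding extra dissipation).

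For the iteration I would set $(\mathcal{n}^0,\bm{u}^0,\mathcal{m}^0)(t,\bm{x})\equiv(\mathcal{n}_0,\bm{u}_0,\mathcal{m}_0)(\bm{x})$ and, given the $j$-th iterate, define the next one by: (i) solving the linear continuity equation $\mathcal{n}^{j+1}_t+\bm{u}^{j}\cdot\nabla\mathcal{n}^{j+1}+(1+\mathcal{n}^{j+1})\,\mathrm{div}\,\bm{u}^{j}=0$ (and, when $\kappa=0$, the analogous linear transport-type equation for $\mathcal{m}^{j+1}$, in which the radiative coupling is the zeroth-order operator identified above and the quadratic terms are frozen at level $j$); (ii) solving the linear parabolic equation for $\bm{u}^{j+1}$ (and, when $\kappa\neq0$, for $\mathcal{m}^{j+1}$) with the variable coefficient $\tfrac{\mathcal{n}^{j}}{\mathcal{n}^{j}+1}$ and the quadratic sources $\mathrm{S}_1,\dots,\mathrm{S}_3$ evaluated at the $j$-th iterate; (iii) recovering $\bm{q}^{j+1}$ from the elliptic equation with right-hand side built from $\mathcal{m}^{j+1}$. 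Each of these linear problems is solvable in the regularity class appearing in $X_i$ by standard theory---the method of characteristics and energy estimates for the transport equations, the Galerkin method plus parabolic regularity for the parabolic ones, Fourier multipliers for the elliptic one.

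I would then establish the \emph{uniform} estimate: with $N^2=\mathcal{E}_i(0,0)$ and $\inf_{\bm{x}}(\mathcal{n}_0+1)>0$, there is $T_0=T_0(N)>0$ so that the whole sequence stays in $X_i(0,T_0;2N)$. Applying $\nabla^\alpha$, $|\alpha|\le2$, to each linearized equation, pairing in $L^2$, and controlling the commutators together with the quadratic-and-higher nonlinearities $\mathrm{S}_1,\dots,\mathrm{S}_4$ by the embedding $H^2(\mathbb{R}^3)\hookrightarrow L^\infty$ and Moser/Gagliardo--Nirenberg product inequalities, one gets $\tfrac{d}{dt}\mathcal{E}_i(0,t)\le C(N)$ along the iterates, while the parabolic variables pick up in addition the dissipative integrals recorded in $\mathcal{E}_i$ and the elliptic bound supplies the $\bm{q}^{j+1}$ part; hence $\mathcal{E}_i(0,t)\le2N^2$ for $t\le T_0$ by induction. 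The constraint $\inf_{\bm{x},t}(\mathcal{n}^{j}+1)>0$ propagates because $1+\mathcal{n}^{j+1}$ equals $(1+\mathcal{n}_0)\exp\!\bigl(-\!\int_0^{t}\mathrm{div}\,\bm{u}^{j}\bigr)$ along the characteristics of $\bm{u}^{j}$, which stays bounded below for $t\le T_0$.

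Finally, convergence and uniqueness come from a difference estimate in the \emph{lower} norm $C([0,T_0];L^2)\cap L^2([0,T_0];H^1)$: using the uniform $H^2$ bounds to absorb the coefficient differences, the differences of consecutive iterates contract after possibly shrinking $T_0$, hence converge in that norm; interpolating against the uniform $H^2$ bound gives convergence in $C([0,T_0];H^{2-\delta})$, weak-$*$ compactness places the limit in $X_i(0,T_0;2N)$ with $\mathcal{E}_i(0,T_0)\le(2N)^2$ by lower semicontinuity, and strong time-continuity into $H^2$ is recovered by the usual regularization/energy-continuity argument; the same estimate applied to two solutions yields uniqueness via Gr\"onwall. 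I expect the main obstacle to be the uniform bound when $\kappa=0$: there $\mathcal{m}$ has no parabolic smoothing and must be treated as a hyperbolic unknown, so one must use the elliptic identity above to see that the radiative coupling $\mathrm{div}\,\bm{q}$ in the $\mathcal{m}$-equation is genuinely zeroth order, and verify that the source $\tfrac{\mathcal{n}}{\mathcal{n}+1}\mathrm{div}\,\bm{q}$ inside $\mathrm{S}_3$ costs no derivative---which is exactly why $\bm{q}$ must be kept slaved to $\mathcal{m}$ rather than the $\bm{q}$-equation differentiated.
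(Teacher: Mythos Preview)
Your plan is correct and is precisely the standard iteration/contraction argument for symmetrizable hyperbolic--parabolic systems; the structural point you isolate---that $\bm{q}$ is slaved to $\mathcal{m}$ through the elliptic operator $I-\nabla\,\mathrm{div}$ and that $\mathrm{div}\,\bm{q}$ is consequently a zeroth-order operator in $\mathcal{m}$---is exactly what the paper exploits later (cf.\ \eqref{divq}). The paper itself does not supply a proof of this proposition: it declares the result standard, omits the argument, and refers the reader to \cite{MR2022134,2007On,1984Systems}, so there is nothing to compare against beyond noting that your outline is the content one would find in those references.
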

\begin{proposition}[A priori estimate]\label{a priori}
	Let the initial data $(\mathcal{n}_0,\bm{u}_0,\mathcal{m}_0)\in H^2(\mathbb{R}^3)$. Suppose that the initial value problem \eqref{cns2} has a solution $(\mathcal{n},\bm{u},\mathcal{m},\bm{q})\in X_i(0,T;\epsilon)$, where $T$ and $\epsilon$ are some positive constants and $\epsilon$ is small enough. Then there exists a positive constant $C$, which is independent of $T$ such that 
	\begin{equation*}
		\mathcal{E}_i(0,T)\leq C\left\|(\mathcal{n}_0,\bm{u}_0,\mathcal{m}_0)\right\|_2,
	\end{equation*}
	where $i=1\  \text{as} \ \kappa\neq0;\ i=2 \ \text{as}\ \kappa=0$.
\end{proposition}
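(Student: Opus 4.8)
\ The plan is a weighted energy argument adapted to the dissipative structure of \eqref{cns2}. I would build a Lyapunov functional $\mathcal{L}(t)$ equivalent to $\|(\mathcal{n},\bm{u},\mathcal{m},\bm{q})(t)\|_2^2$, a dissipation functional $\mathcal{D}_i(t)$ equal to the integrand under the time integral in $\mathcal{E}_i$, and prove
\[
\frac{d}{dt}\mathcal{L}(t)+c\,\mathcal{D}_i(t)\le C\sqrt{\mathcal{E}_i(0,T)}\,\mathcal{D}_i(t)
\]
along any solution in $X_i(0,T;\epsilon)$, with $c>0$ independent of $T$. Smallness of $\epsilon$ absorbs the right-hand side; integrating over $[0,T]$ gives $\sup_{[0,T]}\mathcal{L}+c\int_0^T\mathcal{D}_i\le\mathcal{L}(0)$, and since $\mathcal{L}\simeq\|\cdot\|_2^2$ while the elliptic equation (fourth line of \eqref{cns2}) forces $\|\bm{q}_0\|_2\lesssim\|\mathcal{m}_0\|_2$ for small data, this yields the claimed estimate. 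All estimates are carried out at derivative orders $0\le\ell\le 2$ with the $\kappa$-terms kept explicit, so that $i=1$ ($\kappa\neq 0$) and $i=2$ ($\kappa=0$) are handled simultaneously; the $k=2$ setting is enough because $H^2(\mathbb{R}^3)\hookrightarrow L^\infty$ and second-order dissipation controls every nonlinear term.

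The first ingredient is the family of basic energy identities: apply $\nabla^\ell$, $\ell=0,1,2$, to \eqref{cns2} and pair the four equations with $R\nabla^\ell\mathcal{n}$, $\nabla^\ell\bm{u}$, $\nabla^\ell\mathcal{m}$, $\nabla^\ell\bm{q}$. After integration by parts the linear couplings between $\nabla\mathcal{n},\nabla\mathcal{m}$ and $\bm{u}$, and between ${\rm div}\,\bm{q}$ and $\mathcal{m}$, cancel, leaving $\frac{d}{dt}$ of a weighted $\|\cdot\|_2^2$-quantity plus the dissipation $\mu\|\nabla^{\ell+1}\bm{u}\|^2+(\mu+\mu')\|\nabla^\ell{\rm div}\,\bm{u}\|^2+\kappa\|\nabla^{\ell+1}\mathcal{m}\|^2+\tfrac14\|\nabla^\ell\bm{q}\|^2+\tfrac14\|\nabla^\ell{\rm div}\,\bm{q}\|^2$, with nonlinear terms $\langle\nabla^\ell S_j,\cdot\rangle$ on the right. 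These control the full $\mathcal{m}$- and $\bm{q}$-dissipation when $\kappa\neq 0$, but yield \emph{no} dissipation for $\mathcal{n}$ in either case and \emph{none} for $\mathcal{m}$ when $\kappa=0$; recovering these is the crux.

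For the density dissipation I would, for $\ell=0,1$, test $\nabla^\ell$ of the momentum equation with $\nabla^{\ell+1}\mathcal{n}$: this produces the favorable term $R\|\nabla^{\ell+1}\mathcal{n}\|^2$ at the price of a cross-term $\langle\nabla^\ell\bm{u},\nabla^{\ell+1}\mathcal{n}\rangle$ (of size $\lesssim\|(\bm{u},\mathcal{n})\|_2^2$, hence addable to $\mathcal{L}$ with a small weight $\delta$) plus, after using the continuity equation to integrate $\langle\nabla^\ell\bm{u}_t,\nabla^{\ell+1}\mathcal{n}\rangle$ by parts in time (legitimate by the $C^1_t$-regularity of $X_i$), terms bounded by $\epsilon'\|\nabla^{\ell+1}\mathcal{n}\|^2$ plus $\|\nabla^{\ell+1}\bm{u}\|^2$, $\|\nabla^{\ell+2}\bm{u}\|^2$, $\|\nabla^{\ell+1}\mathcal{m}\|^2$, all in $\mathcal{D}_i$. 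When $\kappa=0$ I would recover the $\mathcal{m}$-dissipation \emph{algebraically} from the radiative flux equation: it reads $\nabla^{\ell+1}\mathcal{m}=\tfrac14\nabla^\ell\nabla{\rm div}\,\bm{q}-\tfrac14\nabla^\ell\bm{q}+\nabla^\ell S_4$, so for $\ell\le 1$ one gets $\|\nabla^{\ell+1}\mathcal{m}\|\lesssim\|\nabla^{\ell+1}{\rm div}\,\bm{q}\|+\|\nabla^\ell\bm{q}\|+\|\nabla^\ell S_4\|$, and the first two terms are precisely the ${\rm div}\,\bm{q}$- and $\bm{q}$-dissipation already produced by the basic identities at orders up to $2$; thus $\|\nabla(\mathcal{n},\mathcal{m})\|_1^2$ is absorbed into the known dissipation modulo nonlinear terms, once one checks that the top-order $\mathcal{m}$–$\bm{q}$ coupling $\langle\nabla^2{\rm div}\,\bm{q},\nabla^2\mathcal{m}\rangle+\langle\nabla^3\mathcal{m},\nabla^2\bm{q}\rangle$ still vanishes by integration by parts. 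Finally $\bm{q}$ is slaved to $\mathcal{m}$: the elliptic equation gives $\|\bm{q}(t)\|_2\lesssim\|\nabla\mathcal{m}(t)\|$ for small data, so $\sup_t\|\bm{q}(t)\|_2^2$ is controlled once $\|\mathcal{m}(t)\|_2^2$ is.

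What remains is to bound every nonlinear term by $C\sqrt{\mathcal{E}_i(0,T)}\,\mathcal{D}_i(t)$ (up to $\frac{d}{dt}$ of a lower-order corrector). Using $H^2\hookrightarrow L^\infty$, Sobolev ($\|f\|_{L^6}\lesssim\|\nabla f\|$), H\"older, commutator/Moser product estimates, and $\|(\mathcal{n},\bm{u},\mathcal{m})\|_2\le\sqrt{\mathcal{E}_i(0,T)}\le\epsilon$, this is routine provided one always places an undifferentiated $\mathcal{n}$ or $\mathcal{m}$ in a norm controlled by $\mathcal{D}_i$. The one delicate point is the top-order transport term, which after integration by parts becomes $-\tfrac12\langle{\rm div}\,\bm{u},|\nabla^2 w|^2\rangle$ with $w\in\{\mathcal{n},\mathcal{m}\}$ and cannot be absorbed against $\|\nabla^3 w\|$ (absent from $\mathcal{D}_i$ for these $w$): here I would use $\|{\rm div}\,\bm{u}\|_{L^\infty}\lesssim\|\nabla\bm{u}\|_{H^2}\le\mathcal{D}_i^{1/2}$ and split $\|\nabla^2 w\|^2=\|\nabla^2 w\|\cdot\|\nabla^2 w\|$, bounding one factor by $\sqrt{\mathcal{E}_i(0,T)}\le\epsilon$ and the other by $\mathcal{D}_i^{1/2}$ — valid precisely because $\|\nabla^2 w\|$ lies in both $\mathcal{E}_i$ and $\mathcal{D}_i$ — so the term is $\le C\epsilon\,\mathcal{D}_i$. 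Collecting the basic identities, the correctors with small weight $\delta$ (chosen so that $\mathcal{L}\simeq\|\cdot\|_2^2$ and the extra terms they generate are absorbed by the viscous dissipation), and the $\mathcal{m}$-recovery relation, then taking $\epsilon$ small, gives $\frac{d}{dt}\mathcal{L}+\tfrac c2\mathcal{D}_i\le 0$, and integration in time finishes the proof. I expect the hardest part to be the $\kappa=0$ case: one must manufacture the entire $\mathcal{m}$-dissipation out of the nonlocal radiative flux equation — which costs one derivative, so only $\nabla\mathcal{m}\in L^2_tH^1$ (not $L^2_tH^2$) is available — verify the intertwined absorption of the $\mathcal{m}$- and $\bm{q}$-estimates at level $2$, and keep every constant uniform as $\kappa\downarrow 0$.
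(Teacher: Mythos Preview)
Your proposal is correct and follows essentially the same architecture as the paper: weighted $H^2$ energy identities at orders $\ell=0,1,2$, the cross-term corrector $\delta\langle\nabla^\ell\bm{u},\nabla^{\ell+1}\mathcal{n}\rangle$ (for $\ell=0,1$) to generate the density dissipation, the elliptic slaving $\|\bm{q}\|_2\lesssim\|\nabla\mathcal{m}\|_1$, and absorption of all nonlinear terms by $\epsilon\,\mathcal{D}_i$ via $H^2\hookrightarrow L^\infty$ and the $L^3$--$L^6$ splitting.

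The one place where your route and the paper's diverge slightly is the recovery of the $\mathcal{m}$-dissipation when $\kappa=0$. You read the fourth equation algebraically as $\nabla^{\ell+1}\mathcal{m}=\tfrac14\nabla^{\ell+1}{\rm div}\,\bm{q}-\tfrac14\nabla^\ell\bm{q}+\nabla^\ell S_4$ and bound $\|\nabla\mathcal{m}\|_1$ directly by the $\bm{q}$- and ${\rm div}\,\bm{q}$-dissipation already produced at orders $0,1,2$; this works cleanly. The paper instead drops the pairing with $\partial^\alpha\bm{q}$ in the $\kappa=0$ higher-order identity, leaving the cross term $\int\partial^\alpha{\rm div}\,\bm{q}\,\partial^\alpha\mathcal{m}$ on the left, and then tests the fourth equation with $\nabla\partial^\alpha\mathcal{m}$ to produce $\|\nabla\partial^\alpha\mathcal{m}\|^2$ minus the \emph{same} cross term, so that addition cancels it exactly. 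Both devices yield $\|\nabla\mathcal{m}\|_1^2$ in $\mathcal{D}_2$; yours is arguably more transparent, while the paper's avoids the large constant $C$ in front of the $\bm{q}$-dissipation that your algebraic bound introduces.
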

Theorem \ref{th1.1} follows from Proposition \ref{existence} and \ref{a priori} by the standard continuity argument. The proof of Proposition \ref{existence} is standard, so we omit it here and one can refer to works \cite{MR2022134,2007On,1984Systems} for the details. Proposition \ref{a priori} shall be proved in Subsection \ref{proof of a priori}.
\subsection{Proof of a priori estimates}\label{proof of a priori}
In this subsection, we are going to present the proof of proposition \ref{a priori}. For later use, we list some Sobolev inequalities and Strauss Lemma as follows
\begin{lemma}[cf.\cite{MR0450957}]\label{inequality}
	Let $f\in H^2(\mathbb{R}^3)$. Then
	\begin{itemize}
		\item[$(i)$]$\|f\|_{L^{\infty}}\leq C\|\nabla f\|_1$; 
		\item[$(ii)$]$\|f\|_{L^6}\leq C\|\nabla f\|$;
		\item[$(iii)$]$\|f\|_{L^q}\leq C\|f\|_1$, $2\leq q\leq6$. 
	\end{itemize}
\end{lemma}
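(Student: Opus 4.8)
All three estimates are the Gagliardo--Nirenberg--Sobolev inequalities specialized to space dimension $n=3$; the plan is to prove (ii) from scratch by the classical one-dimensional slicing argument and then obtain (iii) and (i) from it by elementary Hölder/Young interpolation.

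\emph{Step 1: the borderline embedding, item (ii).} By density it is enough to treat $f\in C_c^\infty(\mathbb{R}^3)$. For each index $i\in\{1,2,3\}$ write
\[
|f(x)|\le\int_{\mathbb{R}}|\partial_i f(x)|\,\mathrm{d}x_i=:g_i(\hat x_i),
\]
where $\hat x_i$ denotes $x$ with its $i$-th coordinate removed, so that $|f(x)|^{3/2}\le(g_1g_2g_3)^{1/2}$. Integrating successively in $x_1$, $x_2$, $x_3$ and using at each step the Cauchy--Schwarz inequality (each $g_i$ being independent of $x_i$) gives the endpoint bound $\|f\|_{L^{3/2}}\le C\|\nabla f\|_{L^1}$. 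Applying this to $|f|^4$ in place of $f$ and then Hölder's inequality,
\[
\|f\|_{L^6}^4=\big\||f|^4\big\|_{L^{3/2}}\le C\big\|\nabla|f|^4\big\|_{L^1}\le C\big\||f|^3\,\nabla f\big\|_{L^1}\le C\|f\|_{L^6}^3\,\|\nabla f\|_{L^2},
\]
and dividing by $\|f\|_{L^6}^3$ (the case $f\equiv0$ being trivial) yields $\|f\|_{L^6}\le C\|\nabla f\|_{L^2}$; a routine density/mollification argument extends this to every $f\in H^2(\mathbb{R}^3)$.

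\emph{Step 2: items (iii) and (i) by interpolation.} For $2\le q\le6$ pick $\theta=\tfrac3q-\tfrac12\in[0,1]$, so that $\tfrac1q=\tfrac{\theta}{2}+\tfrac{1-\theta}{6}$; Hölder's inequality gives $\|f\|_{L^q}\le\|f\|_{L^2}^\theta\|f\|_{L^6}^{1-\theta}$, and combining with (ii),
\[
\|f\|_{L^q}\le C\|f\|_{L^2}^\theta\,\|\nabla f\|_{L^2}^{1-\theta}\le C\big(\|f\|_{L^2}+\|\nabla f\|_{L^2}\big)\le C\|f\|_1,
\]
which is (iii). For (i), first note the scale-invariant Gagliardo--Nirenberg bound $\|f\|_{L^\infty}\le C\|f\|_{L^6}^{1/2}\|\nabla f\|_{L^6}^{1/2}$ (again a routine slicing estimate, applied now to $|f|^3$, or quoted directly from \cite{MR0450957}); then (ii) applied to $f$ and to $\nabla f$ gives $\|f\|_{L^6}\le C\|\nabla f\|_{L^2}$ and $\|\nabla f\|_{L^6}\le C\|\nabla^2 f\|_{L^2}$, so that by Young's inequality
\[
\|f\|_{L^\infty}\le C\|\nabla f\|_{L^2}^{1/2}\,\|\nabla^2 f\|_{L^2}^{1/2}\le C\big(\|\nabla f\|_{L^2}+\|\nabla^2 f\|_{L^2}\big)=C\|\nabla f\|_1.
\]

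\emph{Main obstacle.} There is essentially no obstacle: the only slightly delicate point is the bookkeeping of the iterated Cauchy--Schwarz in Step 1 (and producing a clean self-contained proof of the auxiliary bound $\|f\|_{L^\infty}\lesssim\|f\|_{L^6}^{1/2}\|\nabla f\|_{L^6}^{1/2}$ rather than merely citing it); everything else is Hölder and Young. Since all of this is entirely classical, an acceptable alternative is to invoke the Gagliardo--Nirenberg--Sobolev inequalities from \cite{MR0450957} directly and specialize the exponents to $n=3$.
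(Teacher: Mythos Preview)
Your argument is correct. The paper itself does not prove this lemma at all: it is simply quoted as a known result with the citation \cite{MR0450957}, so there is nothing to compare against. What you have written is the standard Gagliardo--Nirenberg--Sobolev derivation, and each step checks out (the $W^{1,1}\hookrightarrow L^{3/2}$ slicing argument, the bootstrap to $L^6$ via $|f|^4$, the $L^2$--$L^6$ interpolation for (iii), and the scale-invariant $L^\infty$ bound combined with (ii) applied to $f$ and $\nabla f$ for (i)). Your own closing remark already anticipates the situation: for the purposes of this paper, citing \cite{MR0450957} is all that is needed, and your detailed proof, while correct, is more than the paper supplies.
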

\begin{lemma}[cf.\cite{MR0233062}, Lemma 3.7]\label{Strauss}
	Let $M(t)$ be a non-negative continuous function of t satisfying the inequality
	\begin{equation*}
		M(t)\leq b_1+b_2M^{\gamma}(t),
	\end{equation*}
	in some interval containing $0$, where $b_1$ and $b_1$ are positive constants and $\gamma>1$. If $M(0)\leq b_1$ and
	\begin{equation*}
		b_1b_2^{\frac 1{\gamma-1}}<\left(1-\frac 1\gamma\right)\gamma^{-\frac 1{\gamma-1}},
	\end{equation*}
	then in the same interval 
	\begin{equation*}
		M(t)<\frac{b_1}{1-\frac{1}{\gamma}}\ .
	\end{equation*}
\end{lemma}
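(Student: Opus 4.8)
The plan is to reduce the implicit inequality $M(t)\le b_1+b_2M^{\gamma}(t)$ to a one–dimensional barrier argument for the scalar function $\phi(x):=b_1+b_2x^{\gamma}-x$ on $[0,\infty)$, and to close it by a continuity (intermediate value) argument on the interval $I$ containing $0$ where the hypotheses hold. Throughout I write $c:=\dfrac{b_1}{1-1/\gamma}=\dfrac{\gamma b_1}{\gamma-1}$ for the target bound; note $c>b_1>0$ since $\gamma>1$ and $b_1>0$.

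The only mildly delicate step is to recognize that the smallness hypothesis is \emph{exactly} the statement $\phi(c)<0$, i.e.\ $b_2c^{\gamma}<c-b_1$. I would check this by bare algebra. Raising $b_1b_2^{1/(\gamma-1)}<(1-\tfrac1\gamma)\gamma^{-1/(\gamma-1)}$ to the power $\gamma-1>0$ gives $b_1^{\gamma-1}b_2<\bigl(\tfrac{\gamma-1}{\gamma}\bigr)^{\gamma-1}\gamma^{-1}=\dfrac{(\gamma-1)^{\gamma-1}}{\gamma^{\gamma}}$. On the other hand $c-b_1=\dfrac{b_1}{\gamma-1}$ and $b_2c^{\gamma}=b_2\dfrac{\gamma^{\gamma}b_1^{\gamma}}{(\gamma-1)^{\gamma}}$, so dividing the desired inequality $b_2c^{\gamma}<c-b_1$ through by the positive quantity $\dfrac{b_1}{\gamma-1}$ turns it into precisely $b_1^{\gamma-1}b_2<\dfrac{(\gamma-1)^{\gamma-1}}{\gamma^{\gamma}}$. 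Hence the hypothesis is equivalent to $c>b_1+b_2c^{\gamma}$.

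With this identity in hand the continuity argument is routine. Since $M(0)\le b_1<c$, if the conclusion failed there would be some $t_0\in I$ with $M(t_0)\ge c$; as $I$ is an interval (hence connected), $M$ is continuous, and $M(0)<c\le M(t_0)$, the intermediate value theorem produces a point $t_1$ between $0$ and $t_0$ with $M(t_1)=c$. Feeding this into the hypothesis $M(t_1)\le b_1+b_2M(t_1)^{\gamma}$ yields $c\le b_1+b_2c^{\gamma}$, contradicting the previous paragraph. Therefore $M(t)<c$ on all of $I$, which is the assertion.

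I expect no genuine obstacle: the entire content is the exponent bookkeeping of the second step, together with the observation that $I$ is connected so $M$ cannot jump past the level $c$. If one prefers a picture to the IVT, the same conclusion follows from the shape of $\phi$: it is convex on $[0,\infty)$, strictly decreasing then strictly increasing with minimum at $x^{*}=(b_2\gamma)^{-1/(\gamma-1)}$, where $\phi(x^{*})=b_1-(1-\tfrac1\gamma)x^{*}$, which is negative precisely under the hypothesis (equivalently $c<x^{*}$). Thus $\{\phi\ge0\}=[0,a]\cup[b,\infty)$ with $0<b_1<a<x^{*}<b$ (using $\phi(b_1)=b_2b_1^{\gamma}>0$ and monotonicity); since $M(I)$ is connected and $M(0)\le b_1<a$ one gets $M(I)\subseteq[0,a]$, while $\phi(c)<0=\phi(a)$ with $\phi$ strictly decreasing on $[0,x^{*}]$ forces $a<c$, so again $M(t)<c$ on $I$.
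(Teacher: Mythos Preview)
Your argument is correct. The paper does not prove this lemma at all; it merely cites it from the literature (Strauss, \emph{Lemma 3.7}). Your barrier/continuity proof is the standard one: you correctly verified that the smallness hypothesis is algebraically equivalent to $\phi(c)<0$, and the intermediate value argument then closes immediately since $M(0)\le b_1<c$. Both the direct IVT version and the convexity picture you sketch are valid and complete.
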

In what follows, a series of lemmas on energy estimates are given. Firstly, the basic energy estimate is obtained in the following lemma.
\begin{lemma}[basic energy estimate]\label{basiclemma}
	Under the assumption of proposition \ref{a priori}, there exist a sufficiently small constant $\delta\ (0<\epsilon_1\ll\delta\ll1)$ and a positive constant $C$, which are independent of $t$ such that
	\begin{equation}\label{basic}
	  \frac{\rm d}{{\rm d}t}\left(\|(\mathcal{n},\bm{u},\mathcal{m})\|^2+\delta\left\langle\bm{u},\nabla\mathcal{n}\right\rangle\right)+\|(\nabla\bm{u},\nabla\mathcal{n},\bm{q},{\rm div}\bm{q})\|^2+\kappa\|\nabla\mathcal{m}\|^2\leq C\delta\|(\nabla\mathcal{m},\nabla^2\bm{u})\|^2.
	\end{equation}
	\begin{proof}
		we compute that $\eqref{cns2}_1R\mathcal{n}+\eqref{cns2}_2\cdot\bm{u}+\eqref{cns2}_3\mathcal{m}+\eqref{cns2}_4\cdot\bm{q}$ and integrate the resultant equality over $\mathbb{R}^3$ to get
		\begin{multline}\label{basic1}
			\frac{1}{2}\frac{\rm d}{{\rm d}t}\|(\sqrt{R}\mathcal{n},\bm{u},\sqrt{C_v}\mathcal{m})\|^2
			+\mu\|\nabla\bm{u}\|^2+(\mu+\mu')\|{\rm div}\bm{u}\|^2+\kappa\|\nabla\mathcal{m}\|^2
			+\frac 1 4\|\bm{q}\|^2+\frac 1 4\|{\rm div}\bm{q}\|^2
			\\\leq\int_{\mathbb{R}^3}{\rm S_1}R\mathcal{n} \mathrm{d}\bm{x}+\int_{\mathbb{R}^3}{\rm S_2}\cdot\bm{u} \mathrm{d}\bm{x}+\int_{\mathbb{R}^3}{\rm S_3}\mathcal{m} \mathrm{d}\bm{x}+\int_{\mathbb{R}^3}{\rm S_4}\cdot\bm{q} \mathrm{d}\bm{x}.
		\end{multline}
	The four terms on the right side can be estimated as follows. For the first term, we have it from Lemma \ref{inequality} that
	\begin{align}\notag
		\int_{\mathbb{R}^3}{\rm S_1}R\mathcal{n} \mathrm{d}\bm{x}
		&\lesssim\|\mathcal{n}\|_{L^3}\|\mathcal{n}\|_{L^6}\|{\rm div}\bm{u}\|+\|\mathcal{n}\|_{L^3}\|\bm{u}\|_{L^6}\|\nabla\mathcal{n}\|\\\notag
		&\lesssim \|\mathcal{n}\|_1\|\nabla\mathcal{n}\|\|{\rm div}\bm{u}\|+\|\mathcal{n}\|_1\|\nabla\bm{u}\|\|\nabla\mathcal{n}\|\\[2mm]\notag
		&\lesssim \|\mathcal{n}\|_1\left(\|\nabla\mathcal{n}\|^2+\|{\rm div}\bm{u}\|^2\right)+\|\mathcal{n}\|_1\left(\|\nabla\bm{u}\|^2+\|\nabla\mathcal{n}\|^2\right)\\[2mm]
		&\lesssim \epsilon\left(\|\nabla\bm{u}\|^2+\|\nabla\mathcal{n}\|^2\right).
	\end{align}	
	The other three terms can be estimated similarly.
	\begin{align}\notag
		\int_{\mathbb{R}^3}{\rm S_2}\cdot\bm{u} \mathrm{d}\bm{x}
		\lesssim&\|\bm{u}\|_1\|\nabla\bm{u}\|^2+\|\mathcal{n}\|_{L^{\infty}}\|\nabla\bm{u}\|^2+\|\bm{u}\|_{L^{\infty}}\|\nabla\mathcal{n}\|\|\nabla\bm{u}\|+\|\mathcal{n}\|_{L^{\infty}}\|{\rm div}\bm{u}\|^2\\\notag
		&+\|\bm{u}\|_{L^{\infty}}\|\nabla\mathcal{n}\|\|{\rm div}\bm{u}\|+\|(\mathcal{n},\mathcal{m})\|_1\|\nabla\mathcal{n}\|\|\nabla\bm{u}\|\\[2mm]\label{S11}
		\lesssim&\epsilon_1\left(\|\nabla\bm{u}\|^2+\|\nabla\mathcal{n}\|^2+\|{\rm div}\bm{u}\|^2\right),\\[3mm]\notag
		\int_{\mathbb{R}^3}{\rm S_3}\mathcal{m} \mathrm{d}\bm{x}
		\lesssim&\|\bm{u}\|_1\|\nabla\mathcal{m}\|^2+\|\mathcal{m}\|_1\|\nabla\mathcal{m}\|\|{\rm div}\bm{u}\|+\|\mathcal{n}\|_{L^{\infty}}\|\nabla\mathcal{m}\|^2+\|\mathcal{m}\|_{L^{\infty}}\|\nabla\mathcal{n}\|\|\nabla\mathcal{m}\|\\\notag
		&+\|\mathcal{m}\|_{L^{\infty}}\|\nabla\bm{u}\|^2+\|\mathcal{m}\|_{L^{\infty}}\|{\rm div}\bm{u}\|^2+\|\mathcal{n}\|_1\|\nabla\mathcal{m}\|\|{\rm div}\bm{q}\|\\[2mm]\label{S12}
		\lesssim&\epsilon_1\left(\|\nabla\mathcal{m}\|^2+\|{\rm div}\bm{u}\|^2+\|\nabla\mathcal{n}\|^2+\|\nabla\bm{u}\|^2+\|{\rm div}\bm{q}\|^2\right),\\[3mm]	\label{S13}
		\int_{\mathbb{R}^3}{\rm S_4}\cdot\bm{q} \mathrm{d}\bm{x}
		\lesssim&\left(\|\mathcal{m}\|_{L^{\infty}}+\|\mathcal{m}\|_{L^{\infty}}^2+\|\mathcal{m}\|_{L^{\infty}}^3\right)\|\nabla\mathcal{m}\|\|\bm{q}\|\lesssim\epsilon_1\left(\|\nabla\mathcal{m}\|^2+\|\bm{q}\|^2\right).
	\end{align}
	Substituting \eqref{S11}, \eqref{S12} and \eqref{S13} into \eqref{basic1}, we obtain 
	\begin{equation}\label{basic'}
		\frac{\rm d}{{\rm d}t}\|(\mathcal{n},\bm{u},\mathcal{m})\|^2+\|(\nabla\bm{u},\bm{q},{\rm div}\bm{q})\|^2+\kappa\|\nabla\mathcal{m}\|^2\leq C\epsilon_1\|\nabla(\mathcal{m},\mathcal{n})\|.
	\end{equation}
    Finally, we shall estimate $\|\nabla\mathcal{n}\|^2$. We multiply $\eqref{cns2}_2$ by $\nabla\mathcal{n}$ to get
\begin{align}\notag
\left\langle R\frac{\mathcal{m}+1}{\mathcal{n}+1}\nabla\mathcal{n},\nabla\mathcal{n}\right\rangle
+&\frac{\rm d}{{\rm d}t}\left\langle\bm{u},\nabla\mathcal{n}\right\rangle\\\notag
=&-\left\langle{\rm div}\bm{u},\mathcal{n}_t\right\rangle
-R\left\langle\nabla\mathcal{m},\nabla\mathcal{n}\right\rangle
-\left\langle\bm{u}\cdot\nabla\bm{u},\nabla\mathcal{n}\right\rangle
+\mu\left\langle\frac{1}{\mathcal{n}+1}\Delta\bm{u},\nabla\mathcal{n}\right\rangle\\\notag
&+\left(\mu+\mu'\right)\left\langle\frac{1}{\mathcal{n}+1}\nabla{\rm div}\bm{u},\nabla\mathcal{n}\right\rangle,
\end{align}
which combined with $\eqref{cns2}_1$ leads to
\begin{align}\notag
\|\nabla\mathcal{n}\|^2
+\frac{\rm d}{{\rm d}t}\left\langle\bm{u},\nabla\mathcal{n}\right\rangle
\lesssim&\lrn\mathcal{n}+1\rrn\|{\rm div}\bm{u}\|^2
+\lrn\bm{u}\rrn\|\nabla\mathcal{n}\|\|{\rm div}\bm{u}\|
+\|\nabla\mathcal{m}\|\|\nabla\mathcal{n}\|
+\lrn\bm{u}\rrn\|\|\nabla\bm{u}\|\nabla\mathcal{n}\|\\[2mm]\notag	
&+\|\Delta\bm{u}\|\|\nabla\mathcal{n}\|
+\|\nabla{\rm div}\bm{u}\|\|\nabla\mathcal{n}\|\\[2mm]\label{n1}
\lesssim&\epsilon_1\left(\|\nabla\bm{u}\|^2+\|\nabla\mathcal{n}\|^2\right)+\|\nabla\mathcal{m}\|^2+\|\nabla^2\bm{u}\|^2.
\end{align}
We shall multiply \eqref{n1} by $\delta$, add the resultant inequality to \eqref{basic'} and the proof is completed.
	\end{proof}
\end{lemma}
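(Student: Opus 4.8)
The plan is to derive \eqref{basic} by combining the natural $L^2$ energy identity for \eqref{cns2} with one auxiliary term that produces the dissipation of $\nabla\mathcal{n}$, which the parabolic part of the system does not supply by itself.

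First I would form the basic identity: multiply $\eqref{cns2}_1$ by $R\mathcal{n}$, $\eqref{cns2}_2$ by $\bm{u}$, $\eqref{cns2}_3$ by $\mathcal{m}$ and $\eqref{cns2}_4$ by $\bm{q}$, integrate over $\mathbb{R}^3$ and sum. The key structural fact is that every linear coupling cancels after one integration by parts: the density--velocity pair $R\langle\mathrm{div}\,\bm{u},\mathcal{n}\rangle$ and $R\langle\nabla\mathcal{n},\bm{u}\rangle$, the velocity--temperature pair $R\langle\mathrm{div}\,\bm{u},\mathcal{m}\rangle$ and $R\langle\nabla\mathcal{m},\bm{u}\rangle$, and the temperature--flux pair $\langle\mathrm{div}\,\bm{q},\mathcal{m}\rangle$ and $\langle\nabla\mathcal{m},\bm{q}\rangle$ each cancel. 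What remains is $\tfrac12\tfrac{\mathrm d}{\mathrm dt}\|(\sqrt R\,\mathcal{n},\bm{u},\sqrt{C_v}\,\mathcal{m})\|^2$, the dissipation $\mu\|\nabla\bm{u}\|^2+(\mu+\mu')\|\mathrm{div}\,\bm{u}\|^2+\kappa\|\nabla\mathcal{m}\|^2+\tfrac14\|\bm{q}\|^2+\tfrac14\|\mathrm{div}\,\bm{q}\|^2$, and the four nonlinear source integrals $\langle\mathrm{S}_1,R\mathcal{n}\rangle$, $\langle\mathrm{S}_2,\bm{u}\rangle$, $\langle\mathrm{S}_3,\mathcal{m}\rangle$, $\langle\mathrm{S}_4,\bm{q}\rangle$ on the right.

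The second step is to absorb the source integrals. Since every term of $\mathrm{S}_1,\dots,\mathrm{S}_4$ is at least quadratic in the perturbation, after moving a derivative off the highest-order factor by integration by parts and applying H\"older, each contribution is a low norm ($\|\cdot\|_{L^\infty}$ or $\|\cdot\|_1$) of a perturbation times a product of two quantities among $\|\nabla\bm{u}\|,\|\nabla\mathcal{n}\|,\|\nabla\mathcal{m}\|,\|\mathrm{div}\,\bm{u}\|,\|\mathrm{div}\,\bm{q}\|,\|\bm{q}\|$. The embeddings of Lemma \ref{inequality} convert the low norms into $\lesssim\epsilon$ (with $\epsilon$ the a priori bound of Proposition \ref{a priori}), and Young's inequality turns the products into squares; since $\epsilon$ is small all source contributions are soaked up by the dissipation, leaving an inequality still missing the dissipation of $\nabla\mathcal{n}$ — this is \eqref{basic'}.

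The crucial and only delicate step is recovering $\|\nabla\mathcal{n}\|^2$. Because the continuity equation is hyperbolic, I would extract this dissipation from the pressure term of the momentum equation by testing $\eqref{cns2}_2$ against $\nabla\mathcal{n}$: the term $R\langle\tfrac{\mathcal{m}+1}{\mathcal{n}+1}\nabla\mathcal{n},\nabla\mathcal{n}\rangle$ is coercive since $\tfrac{\mathcal{m}+1}{\mathcal{n}+1}=1+O(\epsilon)$; the time term becomes $\tfrac{\mathrm d}{\mathrm dt}\langle\bm{u},\nabla\mathcal{n}\rangle$ once one writes $\langle\bm{u},\nabla\mathcal{n}_t\rangle=-\langle\mathrm{div}\,\bm{u},\mathcal{n}_t\rangle$ and substitutes $\mathcal{n}_t$ from $\eqref{cns2}_1$; the convective and $\nabla\mathcal{m}$ contributions are handled by Young's inequality; and the viscous terms $\mu\Delta\bm{u}$ and $(\mu+\mu')\nabla\mathrm{div}\,\bm{u}$, paired with $\nabla\mathcal{n}$, force $\|\nabla^2\bm{u}\|^2$ onto the right-hand side. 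This produces \eqref{n1}; multiplying it by a small $\delta$ with $0<\epsilon\ll\delta\ll1$ and adding to \eqref{basic'}, the new $\delta\|\nabla\mathcal{n}\|^2$ survives, the $O(\delta)$ errors in $\|\nabla\bm{u}\|^2$ and $\|\mathrm{div}\,\bm{u}\|^2$ are absorbed by the $\mu$- and $(\mu+\mu')$-dissipation (using $\delta\ll\mu$), and the $O(\delta)$ amounts of $\|\nabla\mathcal{m}\|^2$ and $\|\nabla^2\bm{u}\|^2$ stay on the right, which is exactly \eqref{basic}. I expect the main obstacle to be precisely this last manoeuvre: the cross term $\langle\bm{u},\nabla\mathcal{n}\rangle$ is the standard remedy for the absence of density dissipation, but it necessarily generates the top-order term $\|\nabla^2\bm{u}\|^2$ (and, when $\kappa=0$, also $\|\nabla\mathcal{m}\|^2$, which is then not itself a dissipation) on the right-hand side, so the estimate cannot be closed at this level and must be carried with a right-hand side, to be completed by the higher-order energy estimates; note also that the argument is insensitive to whether $\kappa>0$ or $\kappa=0$, since the $\kappa\|\nabla\mathcal{m}\|^2$ term simply drops in the latter case.
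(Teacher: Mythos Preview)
Your proposal is correct and follows essentially the same route as the paper: the weighted $L^2$ identity from testing $\eqref{cns2}_1$--$\eqref{cns2}_4$ against $R\mathcal{n},\bm{u},\mathcal{m},\bm{q}$, absorption of the quadratic sources via Lemma~\ref{inequality} and the smallness $\epsilon$, and then the cross term $\delta\langle\bm{u},\nabla\mathcal{n}\rangle$ obtained by testing the momentum equation against $\nabla\mathcal{n}$ (using $\eqref{cns2}_1$ for $\mathcal{n}_t$) to recover $\|\nabla\mathcal{n}\|^2$ at the price of $\|\nabla^2\bm{u}\|^2$ and $\|\nabla\mathcal{m}\|^2$ on the right. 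Your identification of the obstacle---that these right-hand side terms are not closed at this level and must be handled by the higher-order estimates---is exactly how the paper proceeds.
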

Now we are going to drive the estimates for higher order derivatives separately in two different cases, which are $\kappa\neq0$ and $\kappa=0$.

\begin{lemma}[$\kappa\neq0$]\label{kno0}
	Under the assumption of proposition \ref{a priori}, there exist a sufficiently small constant $\delta\ (0<\epsilon_1\ll\delta\ll1)$ and a positive constant $C$, which are independent of $t$ such that 
	\begin{equation}\label{apriori1}
		\frac{\rm d}{{\rm d}t}\left(\|\nabla(\mathcal{n},\bm{u},\mathcal{m})\|^2_1
		+\delta\sum_{|\alpha|=1}\left\langle\partial_{\bm{x}}^{\alpha}\bm{u},\nabla\partial_{\bm{x}}^{\alpha}\mathcal{n}\right\rangle\right)
		+\|\nabla(\mathcal{n},\bm{q})\|^2_1+\|\nabla^2(\bm{u},\mathcal{m})\|^2_1
		\leq C\epsilon_1\|\nabla\bm{u}\|^2.
	\end{equation}
	\begin{proof}
		For each multi-index $\alpha$ with $1\leq|\alpha|\leq2$, we apply $\partial^\alpha_{\bm{x}}$ to $\eqref{cns2}_1-\eqref{cns2}_4$, multiply them by $R\partial_{\bm{x}}^\alpha\mathcal{n}$, $\partial_{\bm{x}}^\alpha\bm{u}$, $\partial_{\bm{x}}^\alpha\mathcal{m}$, $\partial_{\bm{x}}^\alpha\bm{q}$ respectively, add the resultant equalities together and integrate them over $\mathbb{R}^3$ to get that
		  \begin{eqnarray}\notag
		  	\frac{\rm d}{{\rm d}t}&&\|\partial_{\bm{x}}^\alpha(\sqrt{R}\mathcal{n},\bm{u},\sqrt{C_v}\mathcal{m})\|^2+\mu\|\nabla\partial_{\bm{x}}^\alpha\bm{u}\|^2+(\mu+\mu')\|{\rm div}\partial_{\bm{x}}^\alpha\bm{u}\|^2+\kappa\|\nabla\partial_{\bm{x}}^\alpha\mathcal{m}\|^2+\frac{1}{4}\|\partial_{\bm{x}}^\alpha\bm{q}\|^2+\frac1 4\|{\rm div}\partial_{\bm{x}}^\alpha\bm{q}\|^2\\\notag
		  	&&=\int_{\mathbb{R}^3}\partial_{\bm{x}}^\alpha{\rm S_1}\ R\partial_{\bm{x}}^\alpha\mathcal{n} \mathrm{d}\bm{x}+\int_{\mathbb{R}^3}\partial_{\bm{x}}^\alpha{\rm S_2}\cdot\partial_{\bm{x}}^\alpha\bm{u} \mathrm{d}\bm{x}+\int_{\mathbb{R}^3}\partial_{\bm{x}}^\alpha{\rm S_3}\ \partial_{\bm{x}}^\alpha\mathcal{m} \mathrm{d}\bm{x}+\int_{\mathbb{R}^3}\partial_{\bm{x}}^\alpha{\rm S_4}\cdot\partial_{\bm{x}}^\alpha\bm{q} \mathrm{d}\bm{x}\\\notag
		  	&&=\int_{\mathbb{R}^3}\partial_{\bm{x}}^\alpha{\rm S_1}\ R\partial_{\bm{x}}^\alpha\mathcal{n} \mathrm{d}\bm{x}
		  	-\int_{\mathbb{R}^3}\partial_{\bm{x}}^{\alpha-1}{\rm S_2}\cdot\partial_{\bm{x}}^{\alpha+1}\bm{u} \mathrm{d}\bm{x}
		  	+\int_{\mathbb{R}^3}\partial_{\bm{x}}^{\alpha}{\rm S_3}\ \partial_{\bm{x}}^{\alpha}\mathcal{m} \mathrm{d}\bm{x}+\int_{\mathbb{R}^3}\partial_{\bm{x}}^\alpha{\rm S_4}\cdot\partial_{\bm{x}}^\alpha\bm{q} \mathrm{d}\bm{x}\\[2mm]\label{zong}
		  	&&:=\mathbb{I}_1+\mathbb{I}_2+\mathbb{I}_3+\mathbb{I}_4.
		  \end{eqnarray}
 Then we estimate $\mathbb{I}_i\ (i=1,2,3,4)$ respectively. The first one is $\mathbb{I}_1$.
		\begin{align}\notag
			\mathbb{I}_1\lesssim&\int_{\mathbb{R}^3}\left|\mathcal{n}\ {\rm div}\partial_{\bm{x}}^\alpha\bm{u}\ \partial_{\bm{x}}^\alpha\mathcal{n}\right|
			+\left|{\rm div}\bm{u}\partial_{\bm{x}}^\alpha\mathcal{n}\partial_{\bm{x}}^\alpha\mathcal{n}\right|
			+\sum_{|\beta|=1(|\alpha|=2)}\left|{\rm div}\partial_{\bm{x}}^\beta\bm{u}\ \partial_{\bm{x}}^{\alpha-\beta}\mathcal{n}\ \partial_{\bm{x}}^\alpha\mathcal{n}\right|	\\\notag
			&\qquad+\frac{1}{2}\bm{u}\nabla\left[\left(\partial_{\bm{x}}^\alpha\mathcal{n}\right)^2\right]
			+\left|\partial_{\bm{x}}^\alpha\bm{u}\nabla\mathcal{n}\partial_{\bm{x}}^\alpha\mathcal{n}\right|
			+\sum_{|\beta|=1(|\alpha|=2)}\left|\partial_{\bm{x}}^\beta\bm{u}\nabla\partial_{\bm{x}}^{\alpha-\beta}\mathcal{n}\partial_{\bm{x}}^\alpha\mathcal{n}\right|\ {\rm d}\bm{x}\\\notag
			\lesssim&\lrn \mathcal{n} \rrn\left\|{\rm div}\partial_{\bm{x}}^\alpha\bm{u}\right\|\left\|\partial_{\bm{x}}^\alpha\mathcal{n}\right\|
			+\lrn{\rm div}\bm{u}\rrn\|\partial_{\bm{x}}^\alpha\mathcal{n}\|^2
			+\sum_{|\beta|=1(|\alpha|=2)}\left\|\partial_{\bm{x}}^{\alpha-\beta}\mathcal{n}\right\|_{L^3}\left\|{\rm div}\partial_{\bm{x}}^\beta\bm{u}\right\|_{L^6}\|\partial_{\bm{x}}^\alpha\mathcal{n}\|\\\notag
			&+\lrn{\rm div}\bm{u}\rrn\|\partial_{\bm{x}}^\alpha\mathcal{n}\|^2
			+\|\nabla\mathcal{n}\|_{L^3}\|\partial_{\bm{x}}^\alpha\bm{u}\|_{L^6}\|\partial_{\bm{x}}^\alpha\mathcal{n}\|^2
			+\sum_{|\beta|=1(|\alpha|=2)}\lrn\partial_{\bm{x}}^\beta\bm{u}\rrn\left\|\nabla\partial_{\bm{x}}^{\alpha-\beta}\mathcal{n}\right\|\|\partial_{\bm{x}}^\alpha\mathcal{n}\|\\\notag
			\lesssim&\|\nabla\mathcal{n}\|_1\|{\rm div}\partial_{\bm{x}}^\alpha\bm{u}\|\|\partial_{\bm{x}}^\alpha\mathcal{n}\|
			+\|\partial_{\bm{x}}^\alpha\mathcal{n}\|\|\nabla{\rm div}\bm{u}\|_1\|\partial_{\bm{x}}^\alpha\mathcal{n}\|
			+\|\nabla\mathcal{n}\|_1\|\nabla^{2}{\rm div}\bm{u}\|\|\partial_{\bm{x}}^\alpha\mathcal{n}\|(|\alpha|=2)\\[2.5mm]\notag
			&+\|\partial_{\bm{x}}^\alpha\mathcal{n}\|\|\nabla{\rm div}\bm{u}\|_1\|\partial_{\bm{x}}^\alpha\mathcal{n}\|
			+\|\nabla\mathcal{n}\|_1\|\nabla\partial_{\bm{x}}^\alpha\bm{u}\|\|\partial_{\bm{x}}^\alpha\mathcal{n}\|
			+\|\nabla^2\mathcal{n}\|\|\nabla^{2}\bm{u}\|_1\|\partial_{\bm{x}}^\alpha\mathcal{n}\|(|\alpha|=2)\\[2.5mm]\label{i1}
			\lesssim&\epsilon_1\left(\|\nabla^{|\alpha|+1}\bm{u}\|^2+\|\nabla^{|\alpha|}\mathcal{n}\|^2+\|\nabla^{2}\bm{u}\|_{|1}^2\right).
			\end{align}
			Now we derive the estimate for $\mathbb{I}_2$.
			\begin{align}\notag
			\mathbb{I}_2\lesssim&\int_{\mathbb{R}^3}\left|\bm{u}\nabla\partial_{\bm{x}}^{\alpha-1}\bm{u}\partial_{\bm{x}}^{\alpha+1}\bm{u}\right|
			+\sum_{|\beta|=1(|\alpha|=2)}\left|\partial_{\bm{x}}^{\beta}\bm{u}\nabla\partial_{\bm{x}}^{\alpha-1-\beta}\bm{u}\partial_{\bm{x}}^{\alpha+1}\bm{u}\right|
			+\left|\mathcal{n}\Delta\partial_{\bm{x}}^{\alpha-1}\bm{u}\partial_{\bm{x}}^{\alpha+1}\bm{u}\right|\\\notag
			&\qquad+\sum_{|\beta|=1(|\alpha|=2)}\left|\partial_{\bm{x}}^\beta\mathcal{n}\Delta\partial_{\bm{x}}^{\alpha-1-\beta}\bm{u}\partial_{\bm{x}}^{\alpha+1}\bm{u}\right|
			+\left|\mathcal{n}\nabla{\rm div}\partial_{\bm{x}}^{\alpha-1}\bm{u}\partial_{\bm{x}}^{\alpha+1}\bm{u}\right|\\\notag
			&\qquad+\sum_{|\beta|=1(|\alpha|=2)}\left|\partial_{\bm{x}}^\beta\mathcal{n}\nabla{\rm div}\partial_{\bm{x}}^{\alpha-1-\beta}\bm{u}\partial_{\bm{x}}^\alpha\bm{u}\right|
			+\left|(\mathcal{m},\mathcal{n})\right|\left|\nabla\partial_{\bm{x}}^{\alpha-1}\mathcal{n}\partial_{\bm{x}}^{\alpha+1}\bm{u}\right|\\\notag
			&\qquad+\sum_{|\beta|=1(|\alpha|=2)}\left|\partial_{\bm{x}}^\beta(\mathcal{n},\mathcal{m})\nabla\partial_{\bm{x}}^{\alpha-1-\beta}\mathcal{n}\partial_{\bm{x}}^{\alpha+1}\bm{u}\right|\ {\rm d}\bm{x}\\\notag
			\lesssim&\lrn\bm{u}\rrn\|\nabla^{|\alpha|}\bm{u}\|\|\partial_{\bm{x}}^{\alpha+1}\bm{u}\|
			+\left\|\nabla\bm{u}\right\|_{L^3}\left\|\nabla\bm{u}\right\|_{L^6}\|\partial_{\bm{x}}^{\alpha+1}\bm{u}\|(|\alpha|=2)+
			\lrn\mathcal{n}\rrn\|\Delta\partial_{\bm{x}}^{\alpha-1}\bm{u}\|\|\partial_{\bm{x}}^{\alpha+1}\bm{u}\|\\[2.5mm]\notag
			&+\|\nabla\mathcal{n}\|_{L^3}\|\Delta\bm{u}\|_{L^6}\|\partial_{\bm{x}}^{\alpha+1}\bm{u}\|(|\alpha|=2)
			+\lrn\mathcal{n}\rrn\|\nabla{\rm div}\partial_{\bm{x}}^{\alpha-1}\bm{u}\|\|\partial_{\bm{x}}^{\alpha+1}\bm{u}\|\\[2.5mm]\notag
			&+\|\nabla\mathcal{n}\|_{L^3}\|\nabla{\rm div}\bm{u}\|_{L^6}\|\partial_{\bm{x}}^{\alpha+1}\bm{u}\|(|\alpha|=2)
			+\lrn(\mathcal{m},\mathcal{n})\rrn\|\nabla\partial_{\bm{x}}^{\alpha-1}\mathcal{n}\|\|\partial_{\bm{x}}^{\alpha+1}\bm{u}\|\\[2.5mm]\notag
			&+\|\nabla(\mathcal{m},\mathcal{n})\|_{L^3}\|\nabla\mathcal{n}\|_{L^6}\|\partial_{\bm{x}}^{\alpha+1}\bm{u}\|(|\alpha|=2)\\[2.5mm]\label{i2}
			\lesssim&\epsilon_1\left(\|\nabla^{|\alpha|+1}\bm{u}\|^2+\|\nabla^{|\alpha|}\bm{u}\|^2+\|\nabla^{|\alpha|}\mathcal{n}\|^2\right).
			\end{align}
			The third term $\mathbb{I}_3$ is estimated as follows,
\begin{eqnarray}\notag
\mathbb{I}_3\lesssim&&\int_{\mathbb{R}^3}\frac{1}{2}\bm{u}\nabla\left[\left(\partial_{\bm{x}}^\alpha\mathcal{m}\right)^2\right]
+\left|\partial_{\bm{x}}^\alpha\bm{u}\nabla\mathcal{m}\partial_{\bm{x}}^\alpha\mathcal{m}\right|
+\sum_{|\beta|=1(|\alpha|=2)}\left|\partial_{\bm{x}}^{\alpha-\beta}\bm{u}\nabla\partial_{\bm{x}}^{\beta}\mathcal{m}\partial_{\bm{x}}^{\alpha}\mathcal{m}\right|
+\left|\mathcal{m}{\rm div}\partial_{\bm{x}}^\alpha\bm{u}\partial_{\bm{x}}^\alpha\mathcal{m}\right|
\\\notag
&&\qquad+\left|\partial_{\bm{x}}^\alpha\mathcal{m}{\rm div}\bm{u}\partial_{\bm{x}}^\alpha\mathcal{m}\right|
+\sum_{|\beta|=1(|\alpha|=2)}\left|\partial_{\bm{x}}^{\alpha-\beta}\mathcal{m}{\rm div}\partial_{\bm{x}}^{\beta}\bm{u}\partial_{\bm{x}}^{\alpha}\mathcal{m}\right|	
+\left|\mathcal{n}\Delta\partial_{\bm{x}}^{\alpha-1}\mathcal{m}\partial_{\bm{x}}^{\alpha+1}\mathcal{m}\right|\\\notag
&&\qquad
+\sum_{|\beta|=1(|\alpha|=2)}\left|\partial_{\bm{x}}^\beta\mathcal{n}\Delta\partial_{\bm{x}}^{\alpha-1-\beta}\mathcal{m}\partial_{\bm{x}}^{\alpha+1}\mathcal{m}\right|
+\left|\partial_{\bm{x}}^{\alpha}\left[\mathcal{D}(\bm{u}):\mathcal{D}(\bm{u})\right]\partial_{\bm{x}}^{\alpha}\mathcal{m}\right|\\\notag
&&\qquad
+\left|\partial_{\bm{x}}^\alpha\mathcal{n}\left[\mathcal{D}(\bm{u}):\mathcal{D}(\bm{u})\right]\partial_{\bm{x}}^\alpha\mathcal{m}\right|
+\sum_{|\beta|=1(|\alpha|=2)}\left|\partial_{\bm{x}}^\beta\mathcal{n}\partial_{\bm{x}}^{\alpha-\beta}\left[\mathcal{D}(\bm{u}):\mathcal{D}(\bm{u})\right]\partial_{\bm{x}}^{\alpha}\mathcal{m}\right|
\\\notag	
&&\qquad
+\left|\partial_{\bm{x}}^{\alpha}\left[\left({\rm div}\bm{u}\right)^2\right]\partial_{\bm{x}}^{\alpha}\mathcal{m}\right|
+\left|\left({\rm div}\bm{u}\right)^2\partial_{\bm{x}}^{\alpha}\mathcal{n}\partial_{\bm{x}}^{\alpha}\mathcal{m}\right|
+\sum_{|\beta|=1(|\alpha|=2)}\left|\partial_{\bm{x}}^\beta\mathcal{n}\partial_{\bm{x}}^{\alpha-\beta}\left[\left({\rm div}\bm{u}\right)^2\right]\partial_{\bm{x}}^{\alpha}\mathcal{m}\right|
\\\notag
&&\qquad+\left|\partial_{\bm{x}}^{\alpha}\mathcal{n}{\rm div}\bm{q}\partial_{\bm{x}}^{\alpha}\mathcal{m}\right|
+\left|\mathcal{n}{\rm div}\partial_{\bm{x}}^{\alpha}\bm{q}\partial_{\bm{x}}^{\alpha}\mathcal{m}\right|
+\sum_{|\beta|=1(|\alpha|=2)}\left|\partial_{\bm{x}}^{\alpha-1-\beta}\mathcal{n}{\rm div}\partial_{\bm{x}}^{\beta}\bm{q}\partial_{\bm{x}}^{\alpha}\mathcal{m}\right|\\\notag
\lesssim&&\lrn{\rm div}\bm{u}\rrn\|\partial_{\bm{x}}^{\alpha}\mathcal{m}\|^2
+\|\nabla\mathcal{m}\|_{L^3}\|\partial_{\bm{x}}^{\alpha}\bm{u}\|_{L^6}\|\partial_{\bm{x}}^{\alpha}\mathcal{m}\|
+\lrn\nabla\bm{u}\rrn\|\nabla^2\mathcal{m}\|^2(|\alpha|=2)\\[2.5mm]\notag
&&+\lrn\mathcal{m}\rrn\|{\rm div}\partial_{\bm{x}}^{\alpha}\bm{u}\|\|\partial_{\bm{x}}^{\alpha}\mathcal{m}\|
+\|\partial_{\bm{x}}^{\alpha}\mathcal{m}\|\lrn{\rm div}\bm{u}\rrn\|\partial_{\bm{x}}^{\alpha}\mathcal{m}\|
+\|\nabla\mathcal{m}\|_{L^3}\|\nabla{\rm div}\bm{u}\|_{L^6}\|\partial_{\bm{x}}^{\alpha}\mathcal{m}\|(|\alpha|=2)\\[2.5mm]\notag
&&+\lrn\mathcal{n}\rrn\|\Delta\partial_{\bm{x}}^{\alpha-1}\mathcal{m}\|\|\partial_{\bm{x}}^{\alpha+1}\mathcal{m}\|
+\|\nabla
\mathcal{n}\|_{L^3}\|\Delta\mathcal{m}\|_{L^6}\|\partial_{\bm{x}}^{\alpha+1}\mathcal{m}\|(|\alpha|=2)\\[2.5mm]\notag
&&+\|\partial_{\bm{x}}^{\alpha}\mathcal{m}\|\lrn\nabla\bm{u}\rrn\|\nabla^{|\alpha|+1}\bm{u}\|
+\|\partial_{\bm{x}}^{\alpha}\mathcal{m}\|\|\nabla^2\bm{u}\|_{L^3}\|\nabla^2\bm{u}\|_{L^6}\|(|\alpha|=2)\\[2.5mm]\notag
&&+\|\partial_{\bm{x}}^{\alpha}\mathcal{n}\|\|\partial_{\bm{x}}^{\alpha}\mathcal{m}\|\lrn\left[\mathcal{D}(\bm{u}):\mathcal{D}(\bm{u})\right]\rrn
+\|\nabla\mathcal{n}\|_{L^6}\|\nabla\bm{u}\|_{L^6}\|\nabla^2\bm{u}\|_{L^6}\|\partial_{\bm{x}}^{\alpha}\mathcal{m}\|(|\alpha|=2)\\[2.5mm]\notag
&&+\|\partial_{\bm{x}}^{\alpha}\mathcal{m}\|\lrn{\rm div}\bm{u}\rrn\|\nabla^{|\alpha|}{\rm div}\bm{u}\|
+\|\partial_{\bm{x}}^{\alpha}\mathcal{m}\|\|\nabla{\rm div}\bm{u}\|_{L^3}\|\nabla{\rm div}\bm{u}\|_{L^6}\|(|\alpha|=2)\\[2.5mm]\notag
&&+\|\partial_{\bm{x}}^{\alpha}\mathcal{n}\|\|\partial_{\bm{x}}^{\alpha}\mathcal{m}\|\lrn\left({\rm div}\bm{u}\right)^2\rrn
+\|\nabla\mathcal{n}\|_{L^6}\|{\rm div}\bm{u}\|_{L^6}\|\nabla{\rm div}\bm{u}\|_{L^6}\|\partial_{\bm{x}}^{\alpha}\mathcal{m}\|(|\alpha|=2)\\[2.5mm]\notag
&&+\|\partial_{\bm{x}}^{\alpha}\mathcal{n}\|\lrn{\rm div}\bm{q}\rrn\|\partial_{\bm{x}}^{\alpha}\mathcal{m}\|
+\lrn\mathcal{n}\rrn\|{\rm div}\partial_{\bm{x}}^{\alpha}\bm{q}\|\|\partial_{\bm{x}}^{\alpha}\mathcal{m}\|
+\|\nabla\mathcal{n}\|_{L^3}\|\nabla{\rm div}\bm{q}\|_{L^6}\|\partial_{\bm{x}}^{\alpha}\mathcal{m}\|(|\alpha|=2)\\[2.5mm]\notag
\lesssim&&\epsilon_1\left(\|\nabla^2\bm{u}\|_1^2+\|\nabla^{|\alpha|}\mathcal{m}\|^2+\|\nabla^{|\alpha|+1}\bm{u}\|^2\right.\\\label{i3}
&&\left.\qquad\qquad\qquad\qquad\qquad\qquad\qquad+\|\nabla^{|\alpha|+1}\mathcal{m}\|^2(\kappa\neq0)+\|\nabla{\rm div}\bm{q}\|_1^2+\|\nabla^{|\alpha|}{\rm div}\bm{q}\|^2\right).	
\end{eqnarray}
Finally, we derive the estimate for $\mathbb{I}_4$ as follows,
\begin{eqnarray}\notag
	\mathbb{I}_4\lesssim&&\int_{\mathbb{R}^3}\partial_{\bm{x}}^{\alpha}\left(\frac 3 2\mathcal{m}^2+\mathcal{m}^3+\frac 1 4\mathcal{m}^4\right){\rm div}\partial_{\bm{x}}^{\alpha}\bm{q}\ {\rm d}\bm{x}\\[2.5mm]\notag
	\lesssim&&\epsilon_1\|\nabla^{|\alpha|}\mathcal{m}\|\|\nabla^{\alpha}{\rm div}\bm{q}\|
	+\epsilon_1\|\nabla\mathcal{m}\|_{L^3}\|\nabla\mathcal{m}\|_{L^6}\|\nabla^{|\alpha|}{\rm div}\bm{q}\|(|\alpha|=2)\\[2.5mm]\label{i4}
	\lesssim&&\epsilon_1\left(\|\nabla^{|\alpha|}\mathcal{m}\|^2+\|\nabla^{|\alpha|}{\rm div}\bm{q}\|^2\right).
\end{eqnarray}
Substitute \eqref{i1}, \eqref{i2}, \eqref{i3} and \eqref{i4} into \eqref{zong} and take the sum in terms of $\alpha$ to have
\begin{equation}\label{num}
	\frac{\rm d}{{\rm d}t}\|\nabla(\mathcal{n},\bm{u},\mathcal{m})\|_1^2
	+\|\nabla^2\left(\bm{u},\mathcal{m}\right)\|_1^2
	+\|\nabla\bm{q}\|_1^2
	+\|\nabla{\rm div}\bm{q}\|_1^2
	\lesssim\epsilon_1\left(\|\nabla\bm{u}\|^2+\|\nabla\mathcal{n}\|^2_1\right).
\end{equation}
To make up for the dissipation of $\mathcal{n}$, we apply $\partial_{\bm{x}}^\alpha$ to $\eqref{cns2}_2$, multiply it by $\nabla\partial_{\bm{x}}^\alpha\mathcal{n}$ and integrate the resultant equation over $\mathbb{R}^3$, where $|\alpha|=1$. Then we have
\begin{align}\notag
	&\left\langle R\frac{\mathcal{m}+1}{\mathcal{n}+1}\nabla\partial_{\bm{x}}^\alpha\mathcal{n},\nabla\partial_{\bm{x}}^\alpha\mathcal{n}\right\rangle
	+\frac{\rm d}{{\rm d}t}\left\langle\partial_{\bm{x}}^{\alpha}\bm{u},\nabla\partial_{\bm{x}}^{\alpha}\mathcal{n}\right\rangle\\\notag
	=&-\left\langle\partial_{\bm{x}}^{\alpha}{\rm div}\bm{u},\partial_{\bm{x}}^{\alpha}\mathcal{n}_t\right\rangle
	-\left\langle\nabla\mathcal{n}\partial_{\bm{x}}^\alpha\frac{\mathcal{m}+1}{\mathcal{n}+1},\nabla\partial_{\bm{x}}^\alpha\mathcal{n}\right\rangle
	-R\left\langle\nabla\partial_{\bm{x}}^{\alpha}\mathcal{m},\nabla\partial_{\bm{x}}^{\alpha}\mathcal{n}\right\rangle
	-\left\langle\partial_{\bm{x}}^{\alpha}\bm{u}\cdot\nabla\bm{u},\nabla\partial_{\bm{x}}^{\alpha}\mathcal{n}\right\rangle\\\notag
	&-\left\langle\bm{u}\cdot\nabla\partial_{\bm{x}}^{\alpha}\bm{u},\nabla\partial_{\bm{x}}^{\alpha}\mathcal{n}\right\rangle+\mu\left\langle\frac{1}{\mathcal{n}+1}\Delta\partial_{\bm{x}}^{\alpha}\bm{u},\nabla\partial_{\bm{x}}^{\alpha}\mathcal{n}\right\rangle
	+\mu\left\langle\Delta\bm{u}\partial_{\bm{x}}^{\alpha}\frac{1}{\mathcal{n}+1},\nabla\partial_{\bm{x}}^{\alpha}\mathcal{n}\right\rangle\\\notag
	&+\left(\mu+\mu'\right)\left\langle\frac{1}{\mathcal{n}+1}\nabla{\rm div}\partial_{\bm{x}}^{\alpha}\bm{u},\nabla\partial_{\bm{x}}^{\alpha}\mathcal{n}\right\rangle
	+\left(\mu+\mu'\right)\left\langle\nabla{\rm div}\bm{u}\partial_{\bm{x}}^{\alpha}\frac{1}{\mathcal{n}+1},\nabla\partial_{\bm{x}}^{\alpha}\mathcal{n}\right\rangle,
\end{align}
which combined with $\eqref{cns2}_1$ leads to
\begin{align}\notag
	\|\nabla\partial_{\bm{x}}^\alpha\mathcal{n}\|^2
	+\frac{\rm d}{{\rm d}t}&\left\langle\partial_{\bm{x}}^{\alpha}\bm{u},\nabla\partial_{\bm{x}}^{\alpha}\mathcal{n}\right\rangle\\[2mm]\notag
	\lesssim&\lrn\mathcal{n}+1\rrn\|\partial_{\bm{x}}^{\alpha}{\rm div}\bm{u}\|^2
	+\|{\rm div}\bm{u}\|_{L^3}\|\partial_{\bm{x}}^{\alpha}\mathcal{n}\|_{L^6}\|\partial_{\bm{x}}^{\alpha}{\rm div}\bm{u}\|
	+\lrn\bm{u}\rrn\|\nabla\partial_{\bm{x}}^{\alpha}\mathcal{n}\|\|\partial_{\bm{x}}^{\alpha}{\rm div}\bm{u}\|\\[2mm]\notag
	&+\|\nabla\mathcal{n}\|_{L^3}\|\partial_{\bm{x}}^{\alpha}\bm{u}\|_{L^6}\|\partial_{\bm{x}}^{\alpha}{\rm div}\bm{u}\|
	+\|\nabla\mathcal{n}\|_{L^3}\|\partial_{\bm{x}}^{\alpha}(\mathcal{m},\mathcal{n})\|_{L^6}\|\nabla\partial_{\bm{x}}^{\alpha}\mathcal{n}\|\\[2mm]\notag	
	&+\|\nabla\partial_{\bm{x}}^{\alpha}\mathcal{m}\|\|\nabla\partial_{\bm{x}}^{\alpha}\mathcal{n}\|
	+\|\nabla\bm{u}\|_{L^3}\|\partial_{\bm{x}}^{\alpha}\bm{u}\|_{L^6}\|\nabla\partial_{\bm{x}}^{\alpha}\mathcal{n}\|
	+\lrn\bm{u}\rrn\|\nabla\partial_{\bm{x}}^{\alpha}\bm{u}\|\|\nabla\partial_{\bm{x}}^{\alpha}\mathcal{n}\|\\[2mm]\notag
	&+\|\Delta\partial_{\bm{x}}^{\alpha}\bm{u}\|\|\nabla\partial_{\bm{x}}^{\alpha}\mathcal{n}\|
	+\|\partial_{\bm{x}}^{\alpha}\mathcal{n}\|_{L^3}\|\Delta\bm{u}\|_{L^6}\|\nabla\partial_{\bm{x}}^{\alpha}\mathcal{n}\|
	+\|\nabla{\rm div}\partial_{\bm{x}}^{\alpha}\bm{u}\|\|\nabla\partial_{\bm{x}}^{\alpha}\mathcal{n}\|\\[2mm]\notag
	&+\|\partial_{\bm{x}}^{\alpha}\mathcal{n}\|_{L^3}\|\nabla{\rm div}\bm{u}\|_{L^6}\|\nabla\partial_{\bm{x}}^{\alpha}\mathcal{n}\|.
\end{align}
That is
\begin{multline}\label{n}
    \|\nabla\partial_{\bm{x}}^\alpha\mathcal{n}\|^2
    +\frac{\rm d}{{\rm d}t}\left\langle\partial_{\bm{x}}^{\alpha}\bm{u},\nabla\partial_{\bm{x}}^{\alpha}\mathcal{n}\right\rangle\\
	\lesssim\epsilon_1\left(\|\nabla^{2}\bm{u}\|^2+\|\nabla^{2}\mathcal{m}\|^2+\|\nabla^3\bm{u}\|^2\right)
	+\left(\|\nabla^2\bm{u}\|^2+\|\nabla^{2}\mathcal{m}\|^2+\|\nabla^{3}\bm{u}\|^2\right).
\end{multline}
We take the sum of \eqref{n} in terms of $\alpha$, multiply it by $\delta$ and add the resultant inequality to \eqref{num}, which complete the proof.
  \end{proof}
\end{lemma}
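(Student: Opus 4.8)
The plan is to run a high-order version of the energy argument already used for Lemma \ref{basiclemma}, now carrying the source and commutator terms produced by differentiating \eqref{cns2}. For every multi-index $\alpha$ with $1\le|\alpha|\le 2$ I would apply $\partial_{\bm{x}}^{\alpha}$ to the four equations of \eqref{cns2}, pair the results in $L^2(\mathbb{R}^3)$ with $R\partial_{\bm{x}}^{\alpha}\mathcal{n}$, $\partial_{\bm{x}}^{\alpha}\bm{u}$, $\partial_{\bm{x}}^{\alpha}\mathcal{m}$ and $\partial_{\bm{x}}^{\alpha}\bm{q}$ respectively, and add. Because the linear part of \eqref{cns2} is symmetric in the weighted variables $(\sqrt{R}\mathcal{n},\bm{u},\sqrt{C_v}\mathcal{m})$, all the linear couplings (the terms $R\nabla\mathcal{n}$, $R\nabla\mathcal{m}$, the pressure work $R\,{\rm div}\bm{u}$, and the radiative coupling $\nabla\mathcal{m}$ versus ${\rm div}\bm{q}$) cancel after integration by parts, while the viscous, heat-conduction and radiative operators produce the dissipation $\mu\|\nabla\partial_{\bm{x}}^{\alpha}\bm{u}\|^2+(\mu+\mu')\|{\rm div}\partial_{\bm{x}}^{\alpha}\bm{u}\|^2+\kappa\|\nabla\partial_{\bm{x}}^{\alpha}\mathcal{m}\|^2+\tfrac14\|\partial_{\bm{x}}^{\alpha}\bm{q}\|^2+\tfrac14\|{\rm div}\partial_{\bm{x}}^{\alpha}\bm{q}\|^2$. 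What is left on the right are four nonlinear contributions coming from $\partial_{\bm{x}}^{\alpha}{\rm S_1},\dots,\partial_{\bm{x}}^{\alpha}{\rm S_4}$; in the one involving ${\rm S_2}$ I would integrate by parts once so as to move one derivative from ${\rm S_2}$ onto $\bm{u}$. This is essential, since ${\rm S_2}$ contains $\tfrac{\mathcal{n}}{\mathcal{n}+1}\Delta\bm{u}$: without the shift, the case $|\alpha|=2$ would force a fourth-order velocity derivative, which is not controlled in the $H^2$ framework, whereas after the shift both the top velocity factor $\partial_{\bm{x}}^{\alpha+1}\bm{u}$ and the Laplacian hidden in $\partial_{\bm{x}}^{\alpha-1}{\rm S_2}$ are of order at most three, hence covered by the dissipation.

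Next I would bound the four nonlinear terms. After expanding the derivatives of the products by the Leibniz rule, each term is a product in which at least one factor is a low-order quantity controlled by the a priori smallness $\|(\mathcal{n},\bm{u},\mathcal{m})\|_2\le\epsilon$; pairing Hölder's inequality ($L^\infty$--$L^2$ or $L^3$--$L^6$ splittings) with the Sobolev embeddings $H^1\hookrightarrow L^6$, $H^2\hookrightarrow L^\infty$ of Lemma \ref{inequality}, each is dominated by $\epsilon_1$ times a sum of squared derivative norms of orders not exceeding those already present in the dissipation. Summing over $1\le|\alpha|\le2$ and choosing $\epsilon_1$ small enough that the $\epsilon_1$-multiples of dissipative quantities are absorbed into the left, I would arrive at an inequality of the form $\frac{\rm d}{{\rm d}t}\|\nabla(\mathcal{n},\bm{u},\mathcal{m})\|_1^2+\|\nabla^2(\bm{u},\mathcal{m})\|_1^2+\|\nabla\bm{q}\|_1^2+\|\nabla{\rm div}\bm{q}\|_1^2\lesssim\epsilon_1\bigl(\|\nabla\bm{u}\|^2+\|\nabla\mathcal{n}\|_1^2\bigr)$, i.e.\ with full dissipation of $\bm{u}$, $\mathcal{m}$, $\bm{q}$ but none yet for $\mathcal{n}$.

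To repair the missing density dissipation I would introduce the cross term $\sum_{|\alpha|=1}\langle\partial_{\bm{x}}^{\alpha}\bm{u},\nabla\partial_{\bm{x}}^{\alpha}\mathcal{n}\rangle$. For $|\alpha|=1$, applying $\partial_{\bm{x}}^{\alpha}$ to the momentum equation and testing against $\nabla\partial_{\bm{x}}^{\alpha}\mathcal{n}$, the pressure term yields $\langle R\tfrac{\mathcal{m}+1}{\mathcal{n}+1}\nabla\partial_{\bm{x}}^{\alpha}\mathcal{n},\nabla\partial_{\bm{x}}^{\alpha}\mathcal{n}\rangle\gtrsim\|\nabla\partial_{\bm{x}}^{\alpha}\mathcal{n}\|^2$ up to an $\epsilon_1$-correction, the $\bm{u}_t$ term becomes $\frac{\rm d}{{\rm d}t}\langle\partial_{\bm{x}}^{\alpha}\bm{u},\nabla\partial_{\bm{x}}^{\alpha}\mathcal{n}\rangle$ minus a term in which $\partial_{\bm{x}}^{\alpha}\mathcal{n}_t$ is eliminated through the continuity equation, and every remaining linear term is a second- or third-order derivative of $\bm{u}$ or a second-order derivative of $\mathcal{m}$ --- exactly the quantities already dissipated (this is where $\kappa\neq0$ is used: it supplies the $\|\nabla^2\mathcal{m}\|^2$ dissipation). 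This produces $\|\nabla^2\mathcal{n}\|^2+\frac{\rm d}{{\rm d}t}\sum_{|\alpha|=1}\langle\partial_{\bm{x}}^{\alpha}\bm{u},\nabla\partial_{\bm{x}}^{\alpha}\mathcal{n}\rangle\lesssim\epsilon_1(\cdots)+\bigl(\|\nabla^2\bm{u}\|^2+\|\nabla^2\mathcal{m}\|^2+\|\nabla^3\bm{u}\|^2\bigr)$. Multiplying this by a constant $\delta$ with $\epsilon_1\ll\delta\ll1$ and adding it to the previous inequality finishes the proof: $\delta\|\nabla^2\mathcal{n}\|^2$ joins the left, the $\delta(\|\nabla^2\bm{u}\|^2+\|\nabla^2\mathcal{m}\|^2+\|\nabla^3\bm{u}\|^2)$ on the right is reabsorbed into $\|\nabla^2(\bm{u},\mathcal{m})\|_1^2$ since $\delta\ll1$, the $\epsilon_1\|\nabla\mathcal{n}\|_1^2$ on the right is absorbed into $\delta\|\nabla^2\mathcal{n}\|^2$ since $\epsilon_1\ll\delta$ (its first-order part being controlled together with the basic estimate of Lemma \ref{basiclemma}), and, $\delta$ being small, $\|\nabla(\mathcal{n},\bm{u},\mathcal{m})\|_1^2+\delta\sum_{|\alpha|=1}\langle\partial_{\bm{x}}^{\alpha}\bm{u},\nabla\partial_{\bm{x}}^{\alpha}\mathcal{n}\rangle$ stays equivalent to $\|\nabla(\mathcal{n},\bm{u},\mathcal{m})\|_1^2$; what survives on the right is $C\epsilon_1\|\nabla\bm{u}\|^2$, which is \eqref{apriori1}.

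The hard part is the third step: because the density solves a transport equation with no parabolic smoothing, its dissipation must be manufactured from the momentum balance, and one has to check that every error term generated by the cross term involves only derivatives of $\bm{u}$ and $\mathcal{m}$ that are genuinely dissipated --- in particular $\nabla^2\mathcal{m}$, available here exactly because $\kappa\neq0$ (in the $\kappa=0$ case, treated in the next lemma, the radiative coupling has to play this role instead). The remaining difficulty is bookkeeping: in the second step one must choose, term by term, the Hölder/Sobolev splitting that places the $\epsilon_1$ factor on an already-dissipated norm while keeping every derivative of order at most three, which is lengthy but routine given Lemma \ref{inequality} and the a priori bound.
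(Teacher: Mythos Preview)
Your proposal is correct and follows essentially the same route as the paper's proof: differentiate \eqref{cns2} with $\partial_{\bm{x}}^\alpha$ for $1\le|\alpha|\le2$, test symmetrically to obtain the energy identity with dissipation in $\bm{u},\mathcal{m},\bm{q}$, integrate by parts once on the ${\rm S_2}$-term to avoid a fourth velocity derivative, estimate the four nonlinear pieces via the $L^\infty$--$L^2$ and $L^3$--$L^6$ splittings of Lemma~\ref{inequality}, and then recover the missing density dissipation through the cross term $\langle\partial_{\bm{x}}^\alpha\bm{u},\nabla\partial_{\bm{x}}^\alpha\mathcal{n}\rangle$ with $|\alpha|=1$, multiplied by a small $\delta$. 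Your identification of why $\kappa\neq0$ matters (it supplies the $\|\nabla^2\mathcal{m}\|^2$ needed to absorb the $\nabla\partial_{\bm{x}}^\alpha\mathcal{m}$ error from the cross-term estimate) and of the $\epsilon_1\ll\delta\ll1$ hierarchy matches the paper exactly.
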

We obtain the estimates for $\bm{q}$ and its higher order derivatives in the following lemma.
\begin{lemma}
	Under the assumption of proposition \ref{a priori}, there exists a positive constant $C$, which is independent of $t$ such that
	\begin{equation}
		\|\bm{q}(t)\|^2_2+\|{\rm div}\bm{q}(t)\|_2^2\lesssim\|\nabla\mathcal{m}(t)\|_1^2
	\end{equation}
\begin{proof}
First, multiply $\eqref{cns2}_4$ by $\bm{q}$ and integrate over $\mathbb{R}^3$ to get
\begin{equation}\label{q1}
	\|\bm{q}(t)\|^2+\|{\rm div}\bm{q}(t)\|^2\lesssim\|\nabla\mathcal{m}(t)\|^2
\end{equation}
Then, for each multi-index $\alpha$ with $1\leq|\alpha|\leq2$, we apply $\partial^\alpha_{\bm{x}}$ to $\eqref{cns2}_4$ and multiply it by $\partial_{\bm{x}}^\alpha\bm{q}$ and integrate the resultant equation over $\mathbb{R}^3$ and we obtain that
\begin{align}\notag
\qquad\|\partial_{\bm{x}}^{\alpha}\bm{q}\|^2
	+&\|{\rm div}\partial_{\bm{x}}^{\alpha}\bm{q}\|^2\\[3mm]\notag
	\lesssim&\int_{\mathbb{R}^3}\partial_{\bm{x}}^{\alpha}(\mathcal{m}+1)^4{\rm div}\partial_{\bm{x}}^{\alpha}\bm{q}\ {\rm d}\bm{x}\\\notag
	\lesssim&\lrn4(\mathcal{m}+1)^3\rrn\left\langle\partial_{\bm{x}}^{\alpha}\mathcal{m},{\rm div}\partial_{\bm{x}}^{\alpha}\bm{q}\right\rangle+\lrn12(\mathcal{m}+1)^2\rrn\left\langle\sum_{|\beta|=1(|\alpha|=2)}\partial_{\bm{x}}^{\beta}\mathcal{m}\partial_{\bm{x}}^{\alpha-\beta}\mathcal{m},{\rm div}\partial_{\bm{x}}^{\alpha}\bm{q}\right\rangle\\\notag
	\lesssim&\|\partial_{\bm{x}}^{\alpha}\mathcal{m}\|\|{\rm div}\partial_{\bm{x}}^{\alpha}\bm{q}\|+\|\nabla\mathcal{m}\|_{L^3}\|\nabla\mathcal{m}\|_{L^6}\|{\rm div}\partial_{\bm{x}}^{\alpha}\bm{q}\|(|\alpha|=2)\\[3.5mm]\label{q2}
	\lesssim&\epsilon_1\|{\rm div}\partial_{\bm{x}}^{\alpha}\bm{q}\|^2+\|\nabla^{|\alpha|}\mathcal{m}\|^2.
\end{align}
The lemma holds from \eqref{q2} combined with \eqref{q1}.
	\end{proof}
\end{lemma}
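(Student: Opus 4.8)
The plan is to read $\eqref{cns2}_4$ as an \emph{elliptic} equation for $\bm{q}$ driven by a source that depends only on $\mathcal{m}$; since it carries no time derivative, the claimed bound is a purely static estimate and no energy-in-time argument is required. The algebraic observation that keeps the computation clean is that the linear term $\nabla\mathcal{m}$ and the nonlinear source ${\rm S_4}=-\nabla\!\left(\tfrac32\mathcal{m}^2+\mathcal{m}^3+\tfrac14\mathcal{m}^4\right)$ assemble into a perfect gradient, $\nabla\mathcal{m}-{\rm S_4}=\tfrac14\nabla\!\left((\mathcal{m}+1)^4\right)$, so that $\eqref{cns2}_4$ is equivalent to $\bm{q}-\nabla{\rm div}\bm{q}=-\nabla\!\left((\mathcal{m}+1)^4\right)$. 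Pairing this (or, for the base case, $\eqref{cns2}_4$ itself) with $\bm{q}$ and integrating the $\nabla{\rm div}\bm{q}$ term by parts produces exactly $\|\bm{q}\|^2+\|{\rm div}\bm{q}\|^2$ on the left, which is the structure we want.

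For the zeroth-order estimate I would take the $L^2(\mathbb{R}^3)$ inner product of $\eqref{cns2}_4$ with $\bm{q}$: the first two terms give $\tfrac14\|{\rm div}\bm{q}\|^2+\tfrac14\|\bm{q}\|^2$ after one integration by parts, and the right-hand side is $-\langle\nabla\mathcal{m},\bm{q}\rangle+\langle{\rm S_4},\bm{q}\rangle$. Bounding $|\langle\nabla\mathcal{m},\bm{q}\rangle|$ by Young's inequality and $|\langle{\rm S_4},\bm{q}\rangle|\lesssim\bigl(\lrn\mathcal{m}\rrn+\lrn\mathcal{m}\rrn^2+\lrn\mathcal{m}\rrn^3\bigr)\|\nabla\mathcal{m}\|\,\|\bm{q}\|\lesssim\epsilon_1\bigl(\|\nabla\mathcal{m}\|^2+\|\bm{q}\|^2\bigr)$, where I use Lemma~\ref{inequality} to get $\lrn\mathcal{m}\rrn\lesssim\|\nabla\mathcal{m}\|_1\lesssim\epsilon_1$, and then absorbing every factor of $\|\bm{q}\|$ into the left-hand side, I obtain \eqref{q1}, namely $\|\bm{q}\|^2+\|{\rm div}\bm{q}\|^2\lesssim\|\nabla\mathcal{m}\|^2$.

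For the higher-order bounds I would apply $\partial_{\bm{x}}^\alpha$ with $1\le|\alpha|\le2$ to $\eqref{cns2}_4$, pair with $\partial_{\bm{x}}^\alpha\bm{q}$, and use the gradient structure above together with one integration by parts to reduce the right-hand side to $\langle\partial_{\bm{x}}^\alpha\!\left[(\mathcal{m}+1)^4\right],{\rm div}\partial_{\bm{x}}^\alpha\bm{q}\rangle$. Expanding by the Leibniz rule, the top-order contribution is $4(\mathcal{m}+1)^3\partial_{\bm{x}}^\alpha\mathcal{m}$, controlled by $\|\partial_{\bm{x}}^\alpha\mathcal{m}\|\,\|{\rm div}\partial_{\bm{x}}^\alpha\bm{q}\|$ since $\lrn(\mathcal{m}+1)^3\rrn=O(1)$, while for $|\alpha|=2$ the only genuinely nonlinear term has the shape $(\mathcal{m}+1)^2\,\partial_{\bm{x}}^\beta\mathcal{m}\,\partial_{\bm{x}}^{\alpha-\beta}\mathcal{m}$ with $|\beta|=1$, which I bound by $\|\nabla\mathcal{m}\|_{L^3}\|\nabla\mathcal{m}\|_{L^6}\|{\rm div}\partial_{\bm{x}}^\alpha\bm{q}\|\lesssim\epsilon_1\|\nabla^2\mathcal{m}\|\,\|{\rm div}\partial_{\bm{x}}^\alpha\bm{q}\|$ using $\|\nabla\mathcal{m}\|_{L^3}\lesssim\|\nabla\mathcal{m}\|_1$ and $\|\nabla\mathcal{m}\|_{L^6}\lesssim\|\nabla^2\mathcal{m}\|$ from Lemma~\ref{inequality}. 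Young's inequality then leaves only $\|\nabla^{|\alpha|}\mathcal{m}\|^2$ on the right once the $\|{\rm div}\partial_{\bm{x}}^\alpha\bm{q}\|$ factors are moved to the left, giving \eqref{q2}, i.e. $\|\partial_{\bm{x}}^\alpha\bm{q}\|^2+\|{\rm div}\partial_{\bm{x}}^\alpha\bm{q}\|^2\lesssim\|\nabla^{|\alpha|}\mathcal{m}\|^2$; summing \eqref{q1} and \eqref{q2} over $0\le|\alpha|\le2$ yields the stated inequality. There is no real obstacle in this lemma; the only point that needs a little attention is the absorption step, where one must check that the coefficient multiplying $\|{\rm div}\partial_{\bm{x}}^\alpha\bm{q}\|^2$ coming out of Young's inequality (and, for $|\alpha|=2$, out of the $\epsilon_1$-small nonlinear term) stays strictly below the coefficient of the corresponding term on the left, which is exactly what the smallness of $\epsilon_1$ in Proposition~\ref{a priori} guarantees.
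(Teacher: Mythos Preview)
Your proposal is correct and follows essentially the same approach as the paper: pair $\eqref{cns2}_4$ (and its derivatives) with $\bm{q}$ (resp.\ $\partial_{\bm{x}}^\alpha\bm{q}$), integrate by parts to produce $\|\bm{q}\|^2+\|{\rm div}\bm{q}\|^2$ on the left, use the identity $\nabla\mathcal{m}-{\rm S_4}=\tfrac14\nabla\!\bigl((\mathcal{m}+1)^4\bigr)$ to put the source in divergence form, Leibniz-expand, estimate with $L^\infty$ and $L^3$--$L^6$ bounds from Lemma~\ref{inequality}, and absorb via Young and the smallness of $\epsilon_1$. Your write-up is in fact slightly more explicit than the paper's about the gradient structure and the absorption step.
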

Next, we are going to drive the estimates for higher order derivatives when $\kappa=0$.
\begin{lemma}[$\kappa=0$]\label{kis0}
	Under the assumption of proposition \ref{a priori}, there exists two sufficiently small constants $\delta\ (0<\epsilon_1\ll\delta\ll1)$ and a positive constant $C$, which are all independent of $t$ such that 
	\begin{equation}\label{apriori2}
	\frac{\rm d}{{\rm d}t}\left(\|\nabla(\mathcal{n},\bm{u},\mathcal{m})\|^2_1
	+\delta\sum_{|\alpha|\leq1}\left\langle\partial_{\bm{x}}^{\alpha}\bm{u},\nabla\partial_{\bm{x}}^{\alpha}\mathcal{n}\right\rangle\right)
	+\|\nabla(\mathcal{n},\mathcal{m})\|^2_1+\|\nabla\bm{u}\|^2_2
	\leq C\epsilon_1\|\nabla\bm{u}\|^2.
	\end{equation}
	\begin{proof}
		For each multi-index $\alpha$ with $1\leq|\alpha|\leq2$, we apply $\partial^\alpha_{\bm{x}}$ to $\eqref{cns2}_1-\eqref{cns2}_3$, multiply them by $R\partial_{\bm{x}}^\alpha\mathcal{n}$, $\partial_{\bm{x}}^\alpha\bm{u}$, $\partial_{\bm{x}}^\alpha\mathcal{m}$ respectively, add the resultant equalities together and integrate them over $\mathbb{R}^3$ to get that
		\begin{eqnarray}\notag
		\frac{\rm d}{{\rm d}t}&&\|\partial_{\bm{x}}^\alpha(\sqrt{R}\mathcal{n},\bm{u},\sqrt{C_v}\mathcal{m})\|^2+\mu\|\nabla\partial_{\bm{x}}^\alpha\bm{u}\|^2+(\mu+\mu')\|{\rm div}\partial_{\bm{x}}^\alpha\bm{u}\|^2+\int_{\mathbb{R}^3}\partial_{\bm{x}}^\alpha{\rm div}\bm{q}\partial_{\bm{x}}^\alpha\mathcal{m}\ {\rm d}\bm{x}\\\notag
		&&=\int_{\mathbb{R}^3}\partial_{\bm{x}}^\alpha{\rm S_1}R\partial_{\bm{x}}^\alpha\mathcal{n} \ \mathrm{d}\bm{x}+\int_{\mathbb{R}^3}\partial_{\bm{x}}^\alpha{\rm S_2}\cdot\partial_{\bm{x}}^\alpha\bm{u} \ \mathrm{d}\bm{x}+\int_{\mathbb{R}^3}\partial_{\bm{x}}^\alpha{\rm S_3}\partial_{\bm{x}}^\alpha\mathcal{m} \ \mathrm{d}\bm{x}\\\notag
		&&=\int_{\mathbb{R}^3}\partial_{\bm{x}}^\alpha{\rm S_1}R\partial_{\bm{x}}^\alpha\mathcal{n} \ \mathrm{d}\bm{x}
		-\int_{\mathbb{R}^3}\partial_{\bm{x}}^{\alpha-1}{\rm S_2}\cdot\partial_{\bm{x}}^{\alpha+1}\bm{u} \ \mathrm{d}\bm{x}
		+\int_{\mathbb{R}^3}\partial_{\bm{x}}^{\alpha}{\rm S_3}\partial_{\bm{x}}^{\alpha}\mathcal{m} \ \mathrm{d}\bm{x}\\[2mm]
		&&:=\mathbb{I}_1+\mathbb{I}_2+\mathbb{I}_3.
		\end{eqnarray}
The estimates for $\mathbb{I}_i\ (i=1,2,3)$ we have in the proof of Lemma \ref{kno0}	still hold here. So we take the sum in terms of $\alpha$ to get
\begin{multline}\label{zong0}
\frac{\rm d}{{\rm d}t}\|\nabla(\mathcal{n},\bm{u},\mathcal{m})\|_1^2
+\|\nabla^2\bm{u}\|_1^2+\int_{\mathbb{R}^3}\nabla{\rm div}\bm{q}\cdot\nabla\mathcal{m}+\nabla^2{\rm div}\bm{q}\cdot\nabla^2\mathcal{m}\ {\rm d}\bm{x}\\
\lesssim\epsilon_1\left(\|\nabla\bm{u}\|^2
+\|\nabla\mathcal{n}\|^2_1
+\|\nabla\mathcal{m}\|^2_1
+\|\nabla{\rm div}\bm{q}\|_1^2\right)
\lesssim\epsilon_1\left(\|\nabla\bm{u}\|^2
+\|\nabla\mathcal{n}\|^2_1
+\|\nabla\mathcal{m}\|^2_1\right).
\end{multline}	
Here, we have used \eqref{q2} in the last inequality.		
Then, we multiply $\eqref{cns2}_4$ by $\nabla\mathcal{m}$ and integrate the resultant equation over $\mathbb{R}^3$ to get
\begin{align}\label{m1}
	\|\nabla\mathcal{m}\|^2-\int_{\mathbb{R}^3}\nabla{\rm div}\bm{q}\cdot\nabla\mathcal{m}\ {\rm d}\bm{x}\lesssim\|\bm{q}\|^2.
\end{align}
For each multi-index $\alpha$ with $|\alpha|=1$, we apply $\partial^\alpha_{\bm{x}}$ to $\eqref{cns2}_4$ and multiply it by $\partial_{\bm{x}}^\alpha\bm{q}$ and integrate the resultant equation over $\mathbb{R}^3$ to get
\begin{align*}
	\int_{\mathbb{R}^3}4(\mathcal{m}+1)^3\left(\nabla\partial_{\bm{x}}^\alpha\mathcal{m}\right)^2\ {\rm d}\bm{x}
	&-\int_{\mathbb{R}^3}\partial_{\bm{x}}^\alpha\nabla{\rm div}\bm{q}\cdot\nabla\partial_{\bm{x}}^\alpha\mathcal{m}\ {\rm d}\bm{x}\\\notag
	=&-\int_{\mathbb{R}^3}\partial_{\bm{x}}^\alpha\bm{q}\cdot\nabla\partial_{\bm{x}}^\alpha\mathcal{m}\ {\rm d}\bm{x}
	-\int_{\mathbb{R}^3}12(\mathcal{m}+1)^2\partial_{\bm{x}}^\alpha\mathcal{m}\nabla\mathcal{m}\cdot\nabla\partial_{\bm{x}}^\alpha\mathcal{m}\ {\rm d}\bm{x}\\
	=&-\int_{\mathbb{R}^3}{\rm div}\bm{q}\left(\partial_{\bm{x}}^\alpha\right)^2\mathcal{m}\ {\rm d}\bm{x}
	-\int_{\mathbb{R}^3}12(\mathcal{m}+1)^2\partial_{\bm{x}}^\alpha\mathcal{m}\nabla\mathcal{m}\cdot\nabla\partial_{\bm{x}}^\alpha\mathcal{m}\ {\rm d}\bm{x}\\[3mm]
	\lesssim&\quad\|{\rm div}\bm{q}\|\|\nabla^2\mathcal{m}\|+\|\nabla\mathcal{m}\|_{L^3}\|\nabla\mathcal{m}\|_{L^6}\left\|\nabla^2\mathcal{m}\right\|\\[3mm]
	\lesssim&\quad\epsilon_1\left\|\nabla^2\mathcal{m}\right\|^2+\|{\rm div}\bm{q}\|.
\end{align*}	
Take the sum of last inequality in terms of $\alpha$ to obtain that
\begin{align}\label{m2}
	\left\|\nabla^2\mathcal{m}\right\|^2-\int_{\mathbb{R}^3}\nabla^2{\rm div}\bm{q}\cdot\nabla^2\mathcal{m}\ {\rm d}\bm{x}\lesssim\|{\rm div}\bm{q}\|^2.
\end{align}	
With \eqref{basic}, \eqref{m1} and \eqref{m2}, we have
\begin{equation}\label{m}
	\|\nabla\mathcal{m}\|_1^2
	-\int_{\mathbb{R}^3}\nabla{\rm div}\bm{q}\cdot\nabla\mathcal{m}\ {\rm d}\bm{x}
	-\int_{\mathbb{R}^3}\nabla^2{\rm div}\bm{q}\cdot\nabla^2\mathcal{m}\ {\rm d}\bm{x}
	\lesssim\epsilon_1\|\nabla\mathcal{n}\|.
\end{equation} 
By adding \eqref{m} to \eqref{zong0}, we obtain that
\begin{align}
	\frac{\rm d}{{\rm d}t}\|\nabla(\mathcal{n},\bm{u},\mathcal{m})\|_1^2
	+\|\nabla^2\bm{u}\|_1^2
	+\|\nabla\mathcal{m}\|^2_1
	\lesssim\epsilon_1\left(\|\nabla\bm{u}\|^2
	+\|\nabla\mathcal{n}\|^2_1\right).
\end{align} 
Finally, we carry out the same method that we use in Lemma \ref{kno0} to add the dissipation of $\mathcal{n}$, which completes the proof.
	\end{proof}
\end{lemma}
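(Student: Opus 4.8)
The plan is to follow the $\kappa\neq0$ scheme of Lemma~\ref{kno0}, the only structural novelty being that the dissipation of $\mathcal{m}$, previously furnished by the parabolic term $\kappa\|\nabla^{2}\mathcal{m}\|_{1}^{2}$, must now be extracted entirely from the elliptic radiative-flux equation $\eqref{cns2}_4$. \emph{Step 1 (higher-order energy identity).} For every multi-index $\alpha$ with $1\leq|\alpha|\leq2$, I would apply $\partial_{\bm{x}}^{\alpha}$ to the first three equations of \eqref{cns2} (now with $\kappa=0$), pair the results with $R\partial_{\bm{x}}^{\alpha}\mathcal{n}$, $\partial_{\bm{x}}^{\alpha}\bm{u}$, $\partial_{\bm{x}}^{\alpha}\mathcal{m}$ respectively, add and integrate over $\mathbb{R}^{3}$. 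Relative to \eqref{zong}, the dissipation $\kappa\|\nabla\partial_{\bm{x}}^{\alpha}\mathcal{m}\|^{2}$ is gone and is replaced by the cross term $\langle\partial_{\bm{x}}^{\alpha}{\rm div}\bm{q},\partial_{\bm{x}}^{\alpha}\mathcal{m}\rangle$ produced by ${\rm div}\bm{q}$ in $\eqref{cns2}_3$; the nonlinear right-hand sides are literally $\mathbb{I}_1,\mathbb{I}_2,\mathbb{I}_3$ of Lemma~\ref{kno0}, which I would quote verbatim, noting that the term $\|\nabla^{|\alpha|+1}\mathcal{m}\|^{2}$ present in \eqref{i3} drops out when $\kappa=0$ and that the ${\rm div}\bm{q}$-derivatives there are absorbed through the elliptic estimate \eqref{q2}. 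Summing over $|\alpha|=1,2$ produces $\frac{\rm d}{{\rm d}t}\|\nabla(\mathcal{n},\bm{u},\mathcal{m})\|_{1}^{2}+\|\nabla^{2}\bm{u}\|_{1}^{2}+\int_{\mathbb{R}^{3}}\left(\nabla{\rm div}\bm{q}\cdot\nabla\mathcal{m}+\nabla^{2}{\rm div}\bm{q}\cdot\nabla^{2}\mathcal{m}\right){\rm d}\bm{x}\lesssim\epsilon_{1}\left(\|\nabla\bm{u}\|^{2}+\|\nabla\mathcal{n}\|_{1}^{2}+\|\nabla\mathcal{m}\|_{1}^{2}\right)$.

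\emph{Step 2 (dissipation of $\mathcal{m}$ from the flux equation).} I would test $\eqref{cns2}_4$ against $\nabla\mathcal{m}$, and its first derivatives against $\nabla\partial_{\bm{x}}^{\alpha}\mathcal{m}$ for $|\alpha|=1$. Each pairing yields $\|\nabla^{j}\mathcal{m}\|^{2}$ ($j=1,2$) together with the cross term $-\int_{\mathbb{R}^{3}}\nabla^{j}{\rm div}\bm{q}\cdot\nabla^{j}\mathcal{m}\,{\rm d}\bm{x}$, modulo $O(\|\bm{q}\|^{2}+\|{\rm div}\bm{q}\|^{2})$ coming from the zero-order term of $\eqref{cns2}_4$ and small nonlinearities from ${\rm S_4}$ and the $(\mathcal{m}+1)^{3}$ coefficient. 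The residual $\|\bm{q}\|^{2}+\|{\rm div}\bm{q}\|^{2}$ is absorbed by the dissipation on the left of the basic estimate \eqref{basic}, and not by the elliptic bound $\|\bm{q}\|_{2}^{2}+\|{\rm div}\bm{q}\|_{2}^{2}\lesssim\|\nabla\mathcal{m}\|_{1}^{2}$, which here would be circular. Adding the resulting inequality to that of Step~1, the two cross terms cancel exactly, leaving $\frac{\rm d}{{\rm d}t}\|\nabla(\mathcal{n},\bm{u},\mathcal{m})\|_{1}^{2}+\|\nabla^{2}\bm{u}\|_{1}^{2}+\|\nabla\mathcal{m}\|_{1}^{2}\lesssim\epsilon_{1}\left(\|\nabla\bm{u}\|^{2}+\|\nabla\mathcal{n}\|_{1}^{2}\right)$.

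\emph{Step 3 (dissipation of $\mathcal{n}$).} Exactly as at the end of Lemma~\ref{kno0}, I would apply $\partial_{\bm{x}}^{\alpha}$ with $|\alpha|=1$ to $\eqref{cns2}_2$, pair with $\nabla\partial_{\bm{x}}^{\alpha}\mathcal{n}$, and use $\eqref{cns2}_1$ to rewrite $\langle\partial_{\bm{x}}^{\alpha}{\rm div}\bm{u},\partial_{\bm{x}}^{\alpha}\mathcal{n}_{t}\rangle$; this gives $\|\nabla\partial_{\bm{x}}^{\alpha}\mathcal{n}\|^{2}+\frac{\rm d}{{\rm d}t}\langle\partial_{\bm{x}}^{\alpha}\bm{u},\nabla\partial_{\bm{x}}^{\alpha}\mathcal{n}\rangle\lesssim\epsilon_{1}(\cdots)+\|\nabla^{2}\bm{u}\|^{2}+\|\nabla^{2}\mathcal{m}\|^{2}+\|\nabla^{3}\bm{u}\|^{2}$. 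Multiplying by a sufficiently small $\delta$ and adding this, together with the $|\alpha|=0$ analogue already contained in \eqref{basic}, to the inequality of Step~2, one absorbs $\|\nabla^{2}\bm{u}\|^{2}$, $\|\nabla^{2}\mathcal{m}\|^{2}$, $\|\nabla^{3}\bm{u}\|^{2}$ into the dissipation $\|\nabla^{2}\bm{u}\|_{1}^{2}+\|\nabla\mathcal{m}\|_{1}^{2}$ already present, and $\epsilon_{1}\|\nabla\mathcal{n}\|_{1}^{2}$ into the newly gained $\|\nabla\partial_{\bm{x}}^{\alpha}\mathcal{n}\|^{2}$, provided $\delta$ and then $\epsilon_{1}$ are chosen small. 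This is precisely \eqref{apriori2}.

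The one step that needs genuine care is Step~2: one must check that the cross term $\int\nabla^{j}{\rm div}\bm{q}\cdot\nabla^{j}\mathcal{m}$ produced in the $\mathcal{m}$-energy of Step~1 and the one produced by testing $\eqref{cns2}_4$ against $\nabla^{j}\mathcal{m}$ carry opposite signs, so that summing annihilates them rather than reinforcing them, and that the leftover $\bm{q}$-norms are controlled through \eqref{basic} and not estimated circularly against $\|\nabla\mathcal{m}\|_{1}$. Once this bookkeeping is in place, the remaining nonlinear estimates are a direct transcription of Lemma~\ref{kno0} and \eqref{apriori2} follows by the standard absorption argument.
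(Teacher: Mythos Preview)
Your proposal is correct and follows essentially the same route as the paper: Step~1 reproduces the paper's derivation of the analogue of \eqref{zong} with the radiative cross term $\int\partial^\alpha{\rm div}\bm{q}\,\partial^\alpha\mathcal{m}$ in place of $\kappa\|\nabla\partial^\alpha\mathcal{m}\|^2$; Step~2 matches the paper's testing of $\eqref{cns2}_4$ against $\nabla\mathcal{m}$ and $\nabla\partial^\alpha\mathcal{m}$ to produce $\|\nabla\mathcal{m}\|_1^2$ together with the same cross term of opposite sign, with the leftover $\|\bm{q}\|^2+\|{\rm div}\bm{q}\|^2$ handled through \eqref{basic} rather than the circular elliptic bound; and Step~3 is verbatim the $\mathcal{n}$-dissipation recovery of Lemma~\ref{kno0}. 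Your explicit flagging of the sign/cancellation bookkeeping and of the non-circularity requirement in Step~2 is exactly the point the paper glosses over when it writes ``With \eqref{basic}, \eqref{m1} and \eqref{m2}, we have \eqref{m}''.
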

The estimates for $\bm{q}$ and its higher order derivatives can be obtained in the same way as that when $\kappa\neq0$. Now the proof of Proposition \ref{a priori} has been accomplished.

\section{Spectral analysis of the linearized system }
By applying ${\rm div}$ to $\eqref{cns2}_4$, we have that
\begin{equation}\label{divq}
	{\rm div}\bm{q}=-4\left(1-\Delta\right)^{-1}\Delta\mathcal{m}-\left(1-\Delta\right)^{-1}\Delta\left(\mathcal{m}^4+4\mathcal{m}^3+6\mathcal{m}^2\right).
\end{equation}
Substitute \eqref{divq} into $\eqref{cns2}_3$ to get
\begin{equation}
\left\{
\begin{aligned}
&\mathcal{n}_t+{\rm div}\bm{u}={\rm R_1},\\[4mm]
&\bm{u}_t+R
\nabla\mathcal{n}+R\nabla\mathcal{m}-\mu\Delta\bm{u}-(\mu+\mu')\nabla{\rm div}\bm{u}={\rm R_2},\\[4mm]
&C_v\mathcal{m}_t+R{\rm div}\bm{u}-\kappa\Delta\mathcal{m}-4\left(1-\Delta\right)^{-1}\Delta\mathcal{m}={\rm R_3},
\end{aligned}
\right.
\end{equation}
where
\begin{equation}
\left\{
\begin{aligned}
{\rm R_1}=&-\mathcal{n}{\rm div}\bm{u}-\bm{u}\cdot\nabla\mathcal{n},\\[4mm]
{\rm R_2}=&\ -\bm{u}\cdot\nabla\bm{u}-\frac{\mathcal{n}}{\mathcal{n}+1}\left[\mu\Delta\bm{u}+\left(\mu+\mu'\right)\nabla{\rm div}\bm{u}\right]+R\left(1-\frac{\mathcal{m}+1}{\mathcal{n}+1}\right)\nabla\mathcal{n},\\[3mm]
{\rm R_3}=&-C_v\bm{u}\cdot\nabla\mathcal{m}-R\mathcal{m}{\rm div}\bm{u}-\kappa\frac{\mathcal{n}}{\mathcal{n}+1}\Delta\mathcal{m}+\frac{2\mu}{\mathcal{n}+1}\mathcal{D}(\bm{u}):\mathcal{D}(\bm{u})+\\
&\ \frac{\mu'}{\mathcal{n}+1}\left({\rm div}\bm{u}\right)^2+\frac{\mathcal{n}}{\mathcal{n}+1}{{\rm div}\bm{q}}+\left(1-\Delta\right)^{-1}\Delta\left(\mathcal{m}^4+4\mathcal{m}^3+6\mathcal{m}^2\right).
\end{aligned}
\right.
\end{equation}

For brevity, we use $\bm{U}=[\mathcal{n},\bm{u},\mathcal{m}]$ to denote the solution to the linearized homogeneous system that
\begin{equation}\label{cns3}
\left\{
\begin{aligned}
&\mathcal{n}_t+{\rm div}\bm{u}=0,\\[4mm]
&\bm{u}_t+R\nabla\mathcal{n}+R\nabla\mathcal{m}-\mu\Delta\bm{u}-(\mu+\mu')\nabla{\rm div}\bm{u}=0,\\[4mm]
&C_v\mathcal{m}_t+R{\rm div}\bm{u}-\kappa\Delta\mathcal{m}-4\left(1-\Delta\right)^{-1}\Delta\mathcal{m}=0,
\end{aligned}
\right.
\end{equation}
with initial data
\begin{align} \label{initial3}
(\mathcal{n},\bm{u},\mathcal{m})(0,\bm{x})
=(\mathcal{n}_0,\bm{u}_0,\mathcal{m}_0)(x) \quad {\rm for}\  \ \bm{x}\in\mathbb{R}^3.
\end{align}

In terms of the semigroup theory for evolutionary equation, the solution $\bm{U}=\left[\mathcal{n},\bm{u},\mathcal{m}\right]$ of linear problem \eqref{cns3}--\eqref{initial3} can be expressed as
\begin{equation}\label{3.4}
  \bm{U}_{t}=\bm{B}\bm{U},\qquad\bm{U}(0)=\bm{U}_{0},\qquad t\geq0,
\end{equation}
which gives rise to
\begin{equation}
  \bm{U}(t)=\bm{S}(t)\bm{U}_{0}:=e^{t\bm{B}}\bm{U}_{0},\qquad t\geq0.
\end{equation}
What left is to analyze the properties of the semigroup $\bm{S}(t)$ through the Fourier transform $\bm{A}\left(\bm{\xi}\right)$ of the operator $\bm{B}$. By applying the Fourier transform to system \eqref{3.4}, we obtain
\begin{equation}
\left\{
  \begin{aligned}
  &\widehat{\bm{U}}_t=\bm{A}\left(\bm{\xi}\right)\widehat{\bm{U}},\\
  &\widehat{\bm{U}}(0)=\widehat{\bm{U}_{0}},
  \end{aligned}
\right.
\end{equation}
where
\begin{equation}
  \bm{A}(\bm{\xi})=
    \left(
      \begin{matrix}
        0&-iR\bm{\xi}^T&0\\[2.5mm]
        -iR\bm{\xi}\quad&-\mu|\bm{\xi}|^2\bm{I}-(\mu+\mu')\bm{\xi}\otimes\bm{\xi}\quad&-iR\bm{\xi}\\[2.5mm]
        0&-i\frac{R}{C_v}\bm{\xi}^T&-\frac{\kappa}{C_v}|\bm{\xi}|^2-\frac{4|\xi|^2}{C_v\left(|\xi|^2+1\right)}
      \end{matrix}
    \right),
\end{equation}
and here $\bm{I}$ is identity matrix.

We can express the solution \eqref{3.4} in terms of the semigroup $S(t)$ as
\begin{gather}
  \bm{U}(t)=\bm{S}(t)\bm{U}_{0}=e^{t\bm{B}}\bm{U}:=\mathcal{F}^{-1}\left(e^{t\bm{A}(\bm{\xi})}\widehat{\bm{U}_{0}}\right),\\
  \bm{A}(\bm{\xi})=\widehat{\bm{B}}(\bm{\xi}),\qquad\bm{\xi}\in\mathbb{R}^3.
\end{gather}
The eigenvalues of the matrix $\bm{A}(\bm{\xi})$ can be computed from the determinant
\begin{equation}
  \det\left(\bm{A}(\bm{\xi})-\lambda \bm{I}\right)=
  -\left(\lambda+\mu|\bm{\xi}|^2\right)^2\mathcal{g}\left(\lambda,|\bm{\xi}|^2\right)=0,
\end{equation}
where
\begin{align}\notag
  \mathcal{g}\left(t\lambda,|\bm{\xi}|^2\right)=&
  \lambda^3
  +\left[\frac{\kappa}{C_v}|\xi|^2+(2\mu+\mu')|\xi|^2+\frac{4|\xi|^2}{C_v\left(|\xi|^2+1\right)}\right]\lambda^2\\\notag
  &+\left[(2\mu+\mu')\frac{\kappa}{C_v}|\bm{\xi}|^4+\left(2\mu+\mu'\right)\frac{4|\xi|^4}{C_v\left(|\xi|^2+1\right)}+\left(1+\frac{R^2}{C_v}\right)|\bm{\xi}|^2\right]\lambda\\\label{3.11}
  &+\frac{\kappa}{C_v}|\bm{\xi}|^4+\frac{4|\xi|^2}{C_v\left(|\xi|^2+1\right)},
\end{align}
which implies
\begin{equation}
  \lambda_1=\lambda_1\left(|\bm{\xi}|\right),\quad \lambda_2=\lambda_2\left(|\bm{\xi}|\right),\quad \lambda_3=\lambda_3\left(|\bm{\xi}|\right),\quad
  \lambda_4=-\mu|\bm{\xi}|^2\text{(with multiplicity 2)}.
\end{equation}
We assume that $\lambda_{1,2,3}\neq-(2\mu+\mu')|\bm{\xi}|^2$. (Otherwise, the root $-(2\mu+\mu')|\bm{\xi}|^2$ has higher multiplicity, which,  however, leads to no difficulties for our following analysis.) The corresponding right eigenvectors $R_i(i=1,2,\cdots,5)$ are
\begin{equation*}
\left(
\begin{matrix}
\frac{|\bm{\xi}|^2}{\lambda_1}\\[1.5mm]
i\xi_1\\[1.5mm]
i\xi_2\\[1.5mm]
i\xi_3\\[1.5mm]
\frac{\frac{R}{C_v}|\bm{\xi}|^2}{\frac{\kappa}{C_v}|\bm{\xi}|^2+\frac{4|\xi|^2}{C_v\left(|\xi|^2+1\right)}+\lambda_1}
\end{matrix}
\right)\ 
\left(
\begin{matrix}
\frac{|\bm{\xi}|^2}{\lambda_2}\\[1.5mm]
i\xi_1\\[1.5mm]
i\xi_2\\[1.5mm]
i\xi_3\\[1.5mm]
\frac{\frac{R}{C_v}|\bm{\xi}|^2}{\frac{\kappa}{C_v}|\bm{\xi}|^2+\frac{4|\xi|^2}{C_v\left(|\xi|^2+1\right)}+\lambda_2}
\end{matrix}
\right)\ 
\left(
\begin{matrix}
\frac{|\bm{\xi}|^2}{\lambda_3}\\[1mm]
i\xi_1\\[1mm]
i\xi_2\\[1mm]
i\xi_3\\[1mm]
\frac{\frac{R}{C_v}|\bm{\xi}|^2}{\frac{\kappa}{C_v}|\bm{\xi}|^2+\frac{4|\xi|^2}{C_v\left(|\xi|^2+1\right)}+\lambda_3}
\end{matrix}
\right)\ \left(
\begin{matrix}
0\\[1mm]
\xi_3\\[1mm]
0\\[1mm]
-\xi_1\\[1mm]
0
\end{matrix}
\right)\ \left(
\begin{matrix}
0\\[1mm]
-\frac{\xi_1\xi_2\xi_3}{\xi_1^2+\xi_3^2}\\[1mm]
\xi_3\\[1mm]
-\frac{\xi_2\xi_3^2}{\xi_1^2+\xi_3^2}\\[1mm]
0
\end{matrix}
\right)
\end{equation*}
and the corresponding left eigenvectors $L_i^T(i=1,2,\cdots,5)$ are
\begin{equation*}
\left(
\begin{matrix}
\frac{R|\bm{\xi}|^2}{\lambda_1}\\[1mm]
i\xi_1\\[1mm]
i\xi_2\\[1mm]
i\xi_3\\[1mm]
\frac{R|\bm{\xi}|^2}{\frac{\kappa}{C_v}|\bm{\xi}|^2+\frac{4|\xi|^2}{C_v\left(|\xi|^2+1\right)}+\lambda_1}
\end{matrix}
\right)\ 
\left(
\begin{matrix}
\frac{R|\bm{\xi}|^2}{\lambda_2}\\[1mm]
i\xi_1\\[1mm]
i\xi_2\\[1mm]
i\xi_3\\[1mm]
\frac{R|\bm{\xi}|^2}{\frac{\kappa}{C_v}|\bm{\xi}|^2+\frac{4|\xi|^2}{C_v\left(|\xi|^2+1\right)}+\lambda_2}
\end{matrix}
\right)\ 
\left(
\begin{matrix}
\frac{R|\bm{\xi}|^2}{\lambda_3}\\[1mm]
i\xi_1\\[1mm]
i\xi_2\\[1mm]
i\xi_3\\[1mm]
\frac{R|\bm{\xi}|^2}{\frac{\kappa}{C_v}|\bm{\xi}|^2+\frac{4|\xi|^2}{C_v\left(|\xi|^2+1\right)}+\lambda_3}
\end{matrix}
\right)\ 
\left(
\begin{matrix}
0\\[1mm]
\xi_3\\[1mm]
0\\[1mm]
-\xi_1\\[1mm]
0
\end{matrix}
\right)\ 
\left(
\begin{matrix}
0\\[1mm]
-\frac{\xi_1\xi_2\xi_3}{\xi_1^2+\xi_3^2}\\[1mm]
\xi_3\\[1mm]
-\frac{\xi_2\xi_3^2}{\xi_1^2+\xi_3^2}\\[1mm]
0
\end{matrix}
\right)
\end{equation*}
If $\lambda_1\neq\lambda_2\neq\lambda_3$, the semigroup $\widehat{\bm{G}}(t,\bm{\xi})=e^{t\bm{A}(\bm{\xi})}$ is expressed as
\begin{align}
  e^{t\bm{A}(\bm{\xi})}
  &=\left(R_1,R_2,R_3,R_4,R_5\right)
\left(  
  \begin{matrix}
  e^{\lambda_1t}&\ &\ &\ &\ \\
  \ &e^{\lambda_2t}&\ &\ &\ \\
  \ &\ &e^{\lambda_3t}&\ &\ \\
  \ &\ &\ &e^{-(2\mu+\mu')|\bm{\xi}|^2}&\ \\
  \ &\ &\ &\ &e^{-(2\mu+\mu')|\bm{\xi}|^2}
  \end{matrix}
  \right) 
  \left(
   \begin{matrix}
     \frac{L_1}{L_1\cdot R_1}\\[1mm] \frac{L_2 }{L_2 \cdot R_2 }\\[1mm]  \frac{L_3 }{L_3 \cdot R_3 }\\[1mm]  \frac{L_4}{L_4\cdot R_4}\\[1mm]  \frac{L_5}{L_5\cdot R_5}
   \end{matrix}
  \right)\\
  &=e^{\lambda_1t}\bm{P}_1(\bm{\xi})
+e^{\lambda_2t}\bm{P}_2(\bm{\xi})
+e^{\lambda_3t}\bm{P}_3(\bm{\xi})
+e^{\lambda_4t}\bm{P}_4(\bm{\xi}),
\end{align}
where the project operators $\bm{P}_i(\bm{\xi})\ (i=1,\ 2,\ 3,\ 4)$ can be computed as
\begin{eqnarray*}
  \bm{P}_1(\bm{\xi})=
  &\left(
    \begin{matrix}
    	\frac{R|\bm{\xi}|^2\left(\frac{\kappa}{C_v}|\bm{\xi}|^2+\frac{4|\xi|^2}{C_v\left(|\xi|^2+1\right)}+\lambda_1\right)}{\lambda_1(\lambda_1-\lambda_2)(\lambda_3-\lambda_1)}
    	&\frac{i\left(\frac{\kappa}{C_v}|\bm{\xi}|^2+\frac{4|\xi|^2}{C_v\left(|\xi|^2+1\right)}+\lambda_1\right)\bm{\xi}^T}{(\lambda_1-\lambda_2)(\lambda_3-\lambda_1)}
    	&\frac{R|\bm{\xi}|^2}{(\lambda_1-\lambda_2)(\lambda_3-\lambda_1)}\\[2mm]
    	
    	\frac{iR\left(\frac{\kappa}{C_v}|\bm{\xi}|^2+\frac{4|\xi|^2}{C_v\left(|\xi|^2+1\right)}+\lambda_1\right)\bm{\xi}}{(\lambda_1-\lambda_2)(\lambda_3-\lambda_1)}\
    	&\frac{\left(\frac{\kappa}{C_v}|\bm{\xi}|^2+\frac{4|\xi|^2}{C_v\left(|\xi|^2+1\right)}+\lambda_1\right)\lambda_1}{(\lambda_1-\lambda_2)(\lambda_1-\lambda_3)}\frac{\bm{\xi}\bm{\xi}^T}{|\bm{\xi}|^2}\
    	&\frac{iR\lambda_1\bm{\xi}}{(\lambda_1-\lambda_2)(\lambda_3-\lambda_1)}\\[2mm]
    	
    	\frac{\frac{R^2}{C_v}|\bm{\xi}|^2}{(\lambda_1-\lambda_2)(\lambda_3-\lambda_1)}
    	&\frac{i\frac{R}{C_v}\lambda_1\bm{\xi}^T}{(\lambda_1-\lambda_2)(\lambda_3-\lambda_1)}
    	&\frac{\lambda_1^2+(2\mu+\mu')|\bm{\xi}|^2\lambda_1+|\bm{\xi}|^2}{(\lambda_1-\lambda_2)(\lambda_1-\lambda_3)}
    	
    \end{matrix}
  \right),\\[4mm]
  \bm{P}_2(\bm{\xi})=
   &\left(
     \begin{matrix}
 	\frac{R|\bm{\xi}|^2\left(\frac{\kappa}{C_v}|\bm{\xi}|^2+\frac{4|\xi|^2}{C_v\left(|\xi|^2+1\right)}+\lambda_2\right)}{\lambda_2(\lambda_1-\lambda_2)(\lambda_2-\lambda_3)}
	&\frac{i\left(\frac{\kappa}{C_v}|\bm{\xi}|^2+\frac{4|\xi|^2}{C_v\left(|\xi|^2+1\right)}+\lambda_2\right)\bm{\xi}^T}{(\lambda_1-\lambda_2)(\lambda_2-\lambda_3)}
	&\frac{R|\bm{\xi}|^2}{(\lambda_1-\lambda_2)(\lambda_2-\lambda_3)}\\[2mm]
	
	\frac{iR\left(\frac{\kappa}{C_v}|\bm{\xi}|^2+\frac{4|\xi|^2}{C_v\left(|\xi|^2+1\right)}+\lambda_2\right)\bm{\xi}}{(\lambda_1-\lambda_2)(\lambda_2-\lambda_3)}\
	&\frac{\left(\frac{\kappa}{C_v}|\bm{\xi}|^2+\frac{4|\xi|^2}{C_v\left(|\xi|^2+1\right)}+\lambda_2\right)\lambda_2}{(\lambda_2-\lambda_1)(\lambda_2-\lambda_3)}\frac{\bm{\xi}\bm{\xi}^T}{|\bm{\xi}|^2}\
	&\frac{iR\lambda_2\bm{\xi}}{(\lambda_1-\lambda_2)(\lambda_2-\lambda_3)}\\[2mm]
	
	\frac{\frac{R^2}{C_v}|\bm{\xi}|^2}{(\lambda_1-\lambda_2)(\lambda_2-\lambda_3)}
	&\frac{i\frac{R}{C_v}\lambda_2\bm{\xi}^T}{(\lambda_1-\lambda_2)(\lambda_2-\lambda_3)}
	&\frac{\lambda_2^2+(2\mu+\mu')|\bm{\xi}|^2\lambda_2+|\bm{\xi}|^2}{(\lambda_2-\lambda_1)(\lambda_2-\lambda_3)}
	
    \end{matrix}
  \right),\\[4mm]
\bm{P}_3(\bm{\xi})=
&\left(
\begin{matrix}
	\frac{R|\bm{\xi}|^2\left(\frac{\kappa}{C_v}|\bm{\xi}|^2+\frac{4|\xi|^2}{C_v\left(|\xi|^2+1\right)}+\lambda_3\right)}{\lambda_3(\lambda_3-\lambda_1)(\lambda_2-\lambda_3)}
	&\frac{i\left(\frac{\kappa}{C_v}|\bm{\xi}|^2+\frac{4|\xi|^2}{C_v\left(|\xi|^2+1\right)}+\lambda_3\right)\bm{\xi}^T}{(\lambda_3-\lambda_1)(\lambda_2-\lambda_3)}
	&\frac{R|\bm{\xi}|^2}{(\lambda_3-\lambda_1)(\lambda_2-\lambda_3)}\\[2mm]
	
\frac{iR\left(\frac{\kappa}{C_v}|\bm{\xi}|^2+\frac{4|\xi|^2}{C_v\left(|\xi|^2+1\right)}+\lambda_3\right)\bm{\xi}}{(\lambda_3-\lambda_1)(\lambda_2-\lambda_3)}\
	&\frac{\left(\frac{\kappa}{C_v}|\bm{\xi}|^2+\frac{4|\xi|^2}{C_v\left(|\xi|^2+1\right)}+\lambda_3\right)\lambda_2}{(\lambda_1-\lambda_3)(\lambda_2-\lambda_3)}\frac{\bm{\xi}\bm{\xi}^T}{|\bm{\xi}|^2}\
	&\frac{iR\lambda_3\bm{\xi}}{(\lambda_3-\lambda_1)(\lambda_2-\lambda_3)}\\[2mm]
	
	\frac{\frac{R^2 }{C_v}|\bm{\xi}|^2}{(\lambda_3-\lambda_1)(\lambda_2-\lambda_3)}
	&\frac{i\frac{R}{C_v}\lambda_3\bm{\xi}^T}{(\lambda_3-\lambda_1)(\lambda_2-\lambda_3)}
	&\frac{\lambda_3^2+(2\mu+\mu')|\bm{\xi}|^2\lambda_3+|\bm{\xi}|^2}{(\lambda_1-\lambda_3)(\lambda_2-\lambda_3)}
	
\end{matrix}
\right),
\end{eqnarray*}

\begin{eqnarray*}
	\bm{P}_4(\bm{\xi})=&
	\left(
	\begin{matrix}
		0&0&0&0&0\\
		0&\frac{\xi_3^2}{\xi_1^2+\xi_3^2}&0&-\frac{\xi_1\xi_3}{\xi_1^2+\xi_3^2}&0\\
		0&0&0&0&0\\
		0&-\frac{\xi_1\xi_3}{\xi_1^2+\xi_3^2}&0&\frac{\xi_1^2}{\xi_1^2+\xi_3^2}&0\\
		0&0&0&0&0
	\end{matrix}
	\right)+
	\left(
	\begin{matrix}
		0&0&0&0&0\\
		0&\frac{\xi_1^2\xi_2^2}{|\bm{\xi}|^2(\xi_1^2+\xi_3^2)}&-\frac{\xi_1\xi_2}{|\bm{\xi}|^2}&\frac{\xi_1\xi_2^2\xi_3}{|\bm{\xi}|^2(\xi_1^2+\xi_3^2)}&0\\
		0&-\frac{\xi_2\xi_1}{|\bm{\xi}|^2}&\frac{\xi_1^1+\xi_3^2}{|\bm{\xi}|^2}&-\frac{\xi_2\xi_3}{|\bm{\xi}|^2}&0\\
		0&\frac{\xi_1\xi_2^2\xi_3}{|\bm{\xi}|^2(\xi_1^2+\xi_3^2)}&-\frac{\xi_2\xi_3}{|\bm{\xi}|^2}&\frac{\xi_2^2\xi_3^2}{|\bm{\xi}|^2(\xi_1^2+\xi_3^2)}&0\\
		0&0&0&0&0
	\end{matrix}
	\right)\\[4mm]
	=&\left(
	\begin{matrix}
		0&\bm{0}_{1\times3}&0\\[2mm]
		
		\bm{0}_{3\times1}\
		&\bm{I}-\frac{\bm{\xi}\bm{\xi}^T}{|\bm{\xi}|^2}\
		&\bm{0}_{3\times1}\\[2mm]
		
		0&\bm{0}_{1\times3}&0
	\end{matrix}
	\right).\qquad\qquad\qquad\qquad\qquad\qquad\qquad\quad\quad
\end{eqnarray*}

Here we have applied the following identities that
\begin{eqnarray*}
	L_1\cdot R_1&&=|\bm{\xi}|^2\left(\frac{|\bm{\xi}|^2}{\lambda_1^2}-1+\frac{\frac{R^2}{C_v}|\bm{\xi}|^2\lambda_1}{\lambda_1\left(\frac{\kappa}{C_v}|\bm{\xi}|^2+\frac{4|\xi|^2}{C_v\left(|\xi|^2+1\right)}+\lambda_1\right)^2}\right)\\
	  &&=|\bm{\xi}|^2\left(\frac{|\bm{\xi}|^2}{\lambda_1^2}-1+\frac{\lambda_1^2+(2\mu+\mu')|\bm{\xi}|^2\lambda_1+|\bm{\xi}|^2}{\lambda_1\left(\frac{\kappa}{C_v}|\bm{\xi}|^2+\frac{4|\xi|^2}{C_v\left(|\xi|^2+1\right)}+\lambda_1\right)^2}\right)\\
	  &&=|\bm{\xi}|^2\frac{\frac{\frac{\kappa}{C_v}|\bm{\xi}|^4}{\lambda_1}-\left(2\mu+\mu'+\frac{\kappa}{C_v}\right)|\bm{\xi}|^2\lambda_1-2\lambda_1^2}{\lambda_1\left(\frac{\kappa}{C_v}|\bm{\xi}|^2+\frac{4|\xi|^2}{C_v\left(|\xi|^2+1\right)}+\lambda_1\right)}
	  =\frac{|\bm{\xi}|^2\left(\lambda_1-\lambda_2\right)\left(\lambda_3-\lambda_1\right)}{\lambda_1\left(\frac{\kappa}{C_v}|\bm{\xi}|^2+\frac{4|\xi|^2}{C_v\left(|\xi|^2+1\right)}+\lambda_1\right)}
\end{eqnarray*}
and $L_i\cdot R_i(i=2,3)$, which can be obtained in the similar way.
From \eqref{3.11}, we can derive the estimates of the eigenvalues as follows by applying the implicit Function Theorem.

\begin{lemma}\label{lem1}
	For a sufficiently small $\epsilon>0$ and a sufficiently large $K>0$, we have
	 \begin{itemize}
	 	\item[$(i)$] when $|\bm{\xi}|<\epsilon$, $\lambda_i\ (i=1,2,3,4)$ has the following expansion that
	 	  \begin{equation*}
            \begin{cases}
              &\lambda_1=-\frac{\kappa+4}{R+C_v}|\bm{\xi}|^2
              +\sum_{j=2}^{\infty}a_j|\bm{\xi}|^{2j},\\[2mm]
              &\lambda_2=-\left(\frac{2\mu+\mu'}{2}+\frac{\kappa+4}{2C_v}\frac{R}{C_v+R}\right)|\bm{\xi}|^2
              +\sum_{j=2}^{\infty}b_{2j}|\bm{\xi}|^{2j}+\\[2mm]
              &\qquad\ \ i\left(\sqrt{R+\frac{R^2}{C_v}}|\bm{\xi}|+\sum_{j=2}^{\infty}b_{2j-1}|\bm{\xi}|^{2j-1}\right),\\[2mm]
              &\lambda_3=\ \lambda_2^*\quad (\text{complex conjugate}\ )\ ,\\[2mm]
              &\lambda_4=-\mu|\bm{\xi}|^2\quad (\text{with multiplicity 2}\ )\ ;
            \end{cases}
	 	  \end{equation*}
	 	  \item[$(ii)$] when $\epsilon\leq|\bm{\xi}|\leq K$, $\lambda_i\ (i=1,2,3,4)$ has the following spectrum gap property that
	 	  \begin{equation*}
	 	  Re(\lambda_j)\leq-C,\qquad\text{for some constant $C>0$}\ ;
	 	  \end{equation*}
	 	\item[$(iii)$] when $|\bm{\xi}|>K$,  $\lambda_i\ (i=1,2,3,4)$ has the following expansion that
	 	  \begin{equation}
	 	    \begin{cases}
	 	      &\lambda_1=-\frac{R}{2\mu+\mu'}+\sum_{j=1}^{\infty}c_j|\bm{\xi}|^{-2j},\\[2mm]
	 	      &\lambda_2=-(2\mu+\mu')|\bm{\xi}|^2+\sum_{j=0}^{\infty}d_j|\bm{\xi}|^{-2j},\\[2mm]
	 	      &\lambda_3=-\frac{\kappa}{C_v}|\bm{\xi}|^2+\sum_{j=0}^{\infty}e_j|\bm{\xi}|^{-2j},\\[2mm]
	 	      &\lambda_4=-\mu|\bm{\xi}|^2\quad(\text{with multiplicity 2}\ )\ .
	 	    \end{cases}
	 	  \end{equation}
	 \end{itemize}
$($Here $a_j,\ b_j,\ c_j,\ d_j,\ e_j$ are real constants.$)$
\end{lemma}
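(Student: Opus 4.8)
The plan is first to read off the double root $\lambda_4=-\mu|\bm{\xi}|^2$ from the factorization $\det(\bm{A}(\bm{\xi})-\lambda\bm{I})=-(\lambda+\mu|\bm{\xi}|^2)^2\mathcal{g}(\lambda,|\bm{\xi}|^2)$ — it plainly satisfies all three assertions, with $\mathrm{Re}\,\lambda_4\le-\mu\epsilon^2$ on the middle annulus — and then to study $\lambda_1,\lambda_2,\lambda_3$ as the roots of the cubic $\mathcal{g}(\lambda,s)=\lambda^3+A(s)\lambda^2+B(s)\lambda+C(s)=0$, $s:=|\bm{\xi}|^2$, whose coefficients $A,B,C$ are, by \eqref{3.11}, rational in $s$ and analytic both at $s=0$ and at $s=\infty$. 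The lemma is then a perturbed-root computation — Newton polygon together with the analytic implicit function theorem — carried out separately at the two endpoints $s=0$ and $s=\infty$, plus a continuity/compactness argument for the intermediate range; this follows the scheme of \cite{MR2164944}.

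For part (i) I would make two rescalings. Setting $\lambda=s\,w$ and dividing $\mathcal{g}$ by $s^2$ gives an equation $H_1(w,s)=0$ with $H_1$ analytic near $(w,s)=(w_1^0,0)$; because $A(s)\to0$ while $B(s)/s$ and $C(s)/s^2$ have finite nonzero limits as $s\to0$, the equation degenerates at $s=0$ to a \emph{linear} one with simple root $w_1^0=-\tfrac{\kappa+4}{R+C_v}$, and since $\partial_w H_1(w_1^0,0)\ne0$ the implicit function theorem yields a convergent series $w(s)=w_1^0+O(s)$, i.e. $\lambda_1=-\tfrac{\kappa+4}{R+C_v}|\bm{\xi}|^2+\sum_{j\ge2}a_j|\bm{\xi}|^{2j}$. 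Setting instead $\lambda=|\bm{\xi}|\,\tau$ and dividing by $|\bm{\xi}|^3$ gives a polynomial $P(\tau,\eta)$, $\eta:=|\bm{\xi}|$, whose value at $\eta=0$ is $\tau^3+(R+\tfrac{R^2}{C_v})\tau$; its two nonzero roots $\tau=\pm i\sqrt{R+R^2/C_v}$ are simple, so the IFT produces analytic branches $\tau_{2,3}(\eta)$ and $\lambda_{2,3}=\eta\,\tau_{2,3}(\eta)$. The parity of the expansion then comes from a symmetry: since $\mathcal{g}$ has real coefficients and depends on $\bm{\xi}$ only through $|\bm{\xi}|^2$, one has $P(-\tau,-\eta)=-P(\tau,\eta)$, and combining this with complex conjugation along the real $\eta$-axis gives $\overline{\tau_2(\eta)}=-\tau_2(-\eta)=\tau_3(\eta)$, so the Taylor coefficients of $\tau_2$ are alternately real and purely imaginary — precisely the form claimed for $\lambda_2$ — while $\lambda_3=\lambda_2^*$.

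Part (iii) repeats the scheme at $s=\infty$, i.e. with $\zeta:=|\bm{\xi}|^{-2}\to0$: writing $A(s)=\zeta^{-1}\alpha(\zeta)$, $B(s)=\zeta^{-2}\beta(\zeta)$, $C(s)=\zeta^{-2}\gamma(\zeta)$ with $\alpha,\beta,\gamma$ analytic at $\zeta=0$, the substitution $\lambda=\zeta^{-1}w$ (after multiplying by $\zeta^3$) gives at $\zeta=0$ the equation $w\,(w+(2\mu+\mu'))(w+\kappa/C_v)=0$, whose nonzero roots $-(2\mu+\mu')$ and $-\kappa/C_v$ are simple when $\kappa\ne0$, while keeping $\lambda=O(1)$ and dividing $\mathcal{g}$ by $s^2$ isolates the bounded root through the limiting relation $(2\mu+\mu')\lambda+R=0$; the IFT then yields $\lambda_1=-\tfrac{R}{2\mu+\mu'}+\sum c_j|\bm{\xi}|^{-2j}$, $\lambda_2=-(2\mu+\mu')|\bm{\xi}|^2+\sum d_j|\bm{\xi}|^{-2j}$, $\lambda_3=-\tfrac{\kappa}{C_v}|\bm{\xi}|^2+\sum e_j|\bm{\xi}|^{-2j}$, all in integer powers of $|\bm{\xi}|^{-2}$ since $\mathcal{g}$ sees only $s$. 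For part (ii) it is enough to prove $\mathrm{Re}\,\lambda_j(\bm{\xi})<0$ for every $\bm{\xi}\ne0$: the map $\bm{\xi}\mapsto\max_j\mathrm{Re}\,\lambda_j(\bm{\xi})$ is continuous, hence attains a negative maximum $-C$ on the compact set $\{\epsilon\le|\bm{\xi}|\le K\}$. Apart from $\mathrm{Re}\,\lambda_4=-\mu|\bm{\xi}|^2<0$, the coefficients $A(s),B(s),C(s)$, and also $A(s)B(s)-C(s)$, are for $s>0$ sums of manifestly positive terms (using $\mu,\mu'>0$, $R,C_v>0$ and $\tfrac{4s}{C_v(s+1)}>0$, so this is insensitive to $\kappa>0$ versus $\kappa=0$), and the Routh--Hurwitz criterion then forces $\mathrm{Re}\,\lambda_{1,2,3}<0$; alternatively the strict dissipativity follows from a Kawashima-type symmetrizer for \eqref{cns3}.

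The main obstacle I anticipate is the bookkeeping in the Newton-polygon steps of (i) and (iii): choosing the right rescaling exponent for each branch, checking that every reduced leading-order polynomial has the asserted \emph{simple} roots (so that the IFT applies and the branches are genuine power series, the only fractional powers being those in the $|\bm{\xi}|$-expansion of the acoustic pair near $\bm{\xi}=0$), and carrying the $\bm{\xi}\mapsto-\bm{\xi}$/conjugation symmetry all the way through to decide which coefficients are real and which imaginary. A secondary point of care is the degenerate case $\kappa=0$: there $\kappa/C_v$ drops out of the high-frequency reduction, the branch $\lambda_3$ no longer grows like $|\bm{\xi}|^2$, and the reductions at $s=\infty$ (and the constants in the expansions) must be reworked by dividing $\mathcal{g}$ by the appropriate lower power of $s$ — one checks that the stated form of the expansions still holds — and likewise each step of the intermediate-frequency estimate has to be confirmed to survive $\kappa=0$.
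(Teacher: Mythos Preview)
Your outline is correct and is essentially the standard route for this kind of spectral lemma: peel off the explicit factor $(\lambda+\mu|\bm{\xi}|^2)^2$, then analyze the cubic $\mathcal{g}$ via two different rescalings at $s=0$ (one of order $s$ for the thermal branch, one of order $\sqrt{s}$ for the acoustic pair), a single rescaling at $s=\infty$, the implicit function theorem at each simple reduced root, and a Routh--Hurwitz/compactness argument in between. The parity bookkeeping you do via $P(-\tau,-\eta)=-P(\tau,\eta)$ combined with complex conjugation is the clean way to get the alternating real/imaginary coefficients in the acoustic expansion.

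As for comparison with the paper: there is nothing to compare against. The paper does not give a proof of this lemma; it writes only ``We omit the proof here, which is similar to that of Theorem~2.1--2.3 in \cite{MR2164944}.'' Your sketch is in fact the argument of that reference, adapted to the extra radiative term $\tfrac{4|\bm{\xi}|^2}{C_v(|\bm{\xi}|^2+1)}$, and is considerably more explicit than anything the paper provides. The one substantive point you flag that the paper glosses over is the $\kappa=0$ case at high frequency: there the leading $-\tfrac{\kappa}{C_v}|\bm{\xi}|^2$ in $\lambda_3$ vanishes and the branch is governed instead by the $O(1)$ radiative contribution (so $\lambda_3\to -\tfrac{4}{C_v}$ as $|\bm{\xi}|\to\infty$), which still gives the needed uniform negative real part but does require redoing the $s=\infty$ reduction as you say. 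That caveat is genuine and worth recording; otherwise your proposal is complete.
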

We omit the proof here, which is similar to that of Theorem $2.1-2.3$ in \cite{MR2164944}.

From the estimates above, we have it for a sufficiently small $|\bm{\xi}|$ and some positive constants $d_1$, $d_2$ that
\begin{align}\notag
  &P_2(\bm{\xi})e^{\lambda_2t}+P_3(\bm{\xi})e^{\lambda_3t}\\\notag
  \sim& P_2(\bm{\xi})e^{-d_1|\bm{\xi}|^2t}\left[cos\left(d_2|\bm{\xi}|t\right)+isin\left(d_2|\bm{\xi}|t\right)\right]+P_3(\bm{\xi})e^{-d_1|\bm{\xi}|^2t}\left[cos\left(d_2|\bm{\xi}|t\right)-isin\left(d_2|\bm{\xi}|t\right)\right]\\\label{P23}
  \sim& e^{-d_1|\bm{\xi}|^2t}\left[cos\left(d_2|\bm{\xi}|t\right)\left(P_2+P_3\right)+isin\left(d_2|\bm{\xi}|t\right)(P_2-P_3)\right].
\end{align}
So we need to derive the following lemma by applying the estimates for the eigenvalues above and the proofs are same as \cite{MR2737817,MR3832845}, we omit it here.
\begin{lemma}\label{lem2}
	For a sufficiently small $|\bm{\xi}|$, we have
    the following estimates for $\bm{P}_i(\bm{\xi})\ (i=1,\ 2,\ 3)$ that
	\begin{eqnarray*}
		\bm{P}_1(\bm{\xi})=
		&\left(
		\begin{matrix}
			O(1)&iO(\bm{\xi}^T)&O(1)\\[2mm]
			
			iO(\bm{\xi})
			&O(\bm{\xi}\bm{\xi}^T)
			&iO(\bm{\xi})\\[2mm]
			
			\quad O\left(1\right)\quad
			&\quad iO\left(\bm{\xi}^T\right)\quad
			&\quad O(1)\quad
		\end{matrix}
		\right),\\[3mm]
		\bm{P}_2(\bm{\xi})+\bm{P}_3(\bm{\xi})=
		&\left(
		\begin{matrix}
			O(1)&iO(\bm{\xi}^T)&O(1)\\[2mm]
			
			iO(\bm{\xi})
			&O(\frac{\bm{\xi}\bm{\xi}^T}{|\bm{\xi}|^2})
			&iO(\bm{\xi})\\[2mm]
			
			\quad O\left(1\right)\quad
			&\quad iO\left(\bm{\xi}^T\right)\quad
			&\quad O(1)\quad
		\end{matrix}
		\right),\\[3mm]
		\bm{P}_2(\bm{\xi})-\bm{P}_3(\bm{\xi})=
		&\left(
		\begin{matrix}
			iO(|\bm{\xi}|)&O(\frac{\bm{\xi}^T}{|\bm{\xi}|})&iO(|\bm{\xi}|)\\[2mm]
			
			O(\frac{\bm{\xi}}{|\bm{\xi}|})
			&iO(\frac{\bm{\xi}\bm{\xi}^T}{|\bm{\xi}|})
			&O(\frac{\bm{\xi}}{|\bm{\xi}|})\\[2mm]
			
			\quad iO\left(|\bm{\xi}|\right)\quad
			&\quad O\left(\frac{\bm{\xi}^T}{|\bm{\xi}|}\right)\quad
			&\quad iO(|\bm{\xi}|)\quad
		\end{matrix}
		\right).      
	\end{eqnarray*}

\end{lemma}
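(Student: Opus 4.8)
The plan is to insert the low-frequency eigenvalue expansions of Lemma~\ref{lem1}$(i)$ into the explicit formulas for $\bm{P}_1(\bm{\xi})$, $\bm{P}_2(\bm{\xi})$, $\bm{P}_3(\bm{\xi})$ written out above and to read off the leading order of every block entry; the whole argument is a careful power count in $|\bm{\xi}|$. Write $\beta:=\sqrt{R+R^2/C_v}>0$. The facts to be used over and over are: $\lambda_1=-\tfrac{\kappa+4}{R+C_v}|\bm{\xi}|^2+O(|\bm{\xi}|^4)$ is real and of size $|\bm{\xi}|^2$; $\lambda_2=i\beta|\bm{\xi}|+O(|\bm{\xi}|^2)$ with $\lambda_3=\overline{\lambda_2}$, so that $\lambda_1-\lambda_2$ and $\lambda_3-\lambda_1$ have size $|\bm{\xi}|$ with leading terms $\mp i\beta|\bm{\xi}|$, while $\lambda_2-\lambda_3=2i\beta|\bm{\xi}|+O(|\bm{\xi}|^3)$ and $\lambda_2+\lambda_3=O(|\bm{\xi}|^2)$; and $\tfrac{\kappa}{C_v}|\bm{\xi}|^2+\tfrac{4|\bm{\xi}|^2}{C_v(|\bm{\xi}|^2+1)}=\tfrac{\kappa+4}{C_v}|\bm{\xi}|^2+O(|\bm{\xi}|^4)$, so that this quantity plus $\lambda_1$ remains of size $|\bm{\xi}|^2$, whereas this quantity plus $\lambda_2$ (resp.\ $\lambda_3$) has size $|\bm{\xi}|$ with leading part $i\beta|\bm{\xi}|$ (resp.\ $-i\beta|\bm{\xi}|$). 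Finally $\tfrac{\bm{\xi}\bm{\xi}^T}{|\bm{\xi}|^2}$ is a uniformly bounded, direction-dependent matrix, so it never alters an order count.

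For $\bm{P}_1(\bm{\xi})$ the estimate is immediate: in each entry the denominator is $(\lambda_1-\lambda_2)(\lambda_3-\lambda_1)=O(|\bm{\xi}|^2)$, with one extra factor $\lambda_1=O(|\bm{\xi}|^2)$ in the $(1,1)$ slot, while each numerator carries two powers of $|\bm{\xi}|$ from its scalar $|\bm{\xi}|^2$ prefactor, one more from each loose $\bm{\xi}$, together with the cancellation noted above for $\tfrac{\kappa}{C_v}|\bm{\xi}|^2+\cdots+\lambda_1=O(|\bm{\xi}|^2)$ and the fact that $\lambda_1=O(|\bm{\xi}|^2)$. The count yields $O(1)$ in the four scalar corners, $iO(\bm{\xi})$, $iO(\bm{\xi}^T)$ in the four $\bm{\xi}$-linear blocks, and $O(\bm{\xi}\bm{\xi}^T)$ in the central velocity block, where the scalar factor is $O(|\bm{\xi}|^2)$ multiplying the bounded matrix $\tfrac{\bm{\xi}\bm{\xi}^T}{|\bm{\xi}|^2}$.

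For $\bm{P}_2\pm\bm{P}_3$ I would first use the conjugation symmetry $\overline{\lambda_1}=\lambda_1$, $\overline{\lambda_2}=\lambda_3$, $\overline{\lambda_3}=\lambda_2$. Comparing the displayed formulas for $\bm{P}_2$ and $\bm{P}_3$ entrywise — and using $(\lambda_1-\lambda_3)(\lambda_3-\lambda_2)=(\lambda_3-\lambda_1)(\lambda_2-\lambda_3)$ — one checks that $\bm{P}_3=\overline{\bm{P}_2}$ on the four ``even'' blocks (the scalar corners and the central block) and $\bm{P}_3=-\overline{\bm{P}_2}$ on the four ``odd'' $\bm{\xi}$-linear blocks, the sign being forced by the lone factor $i$ in the latter. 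Hence on even blocks $\bm{P}_2+\bm{P}_3=2\,\mathrm{Re}\,\bm{P}_2$ and $\bm{P}_2-\bm{P}_3=2i\,\mathrm{Im}\,\bm{P}_2$, while on odd blocks the two roles are interchanged. A direct power count for $\bm{P}_2$ alone (numerators over the denominator $(\lambda_1-\lambda_2)(\lambda_2-\lambda_3)=O(|\bm{\xi}|^2)$, with an extra $\lambda_2=O(|\bm{\xi}|)$ in the $(1,1)$ slot) shows every entry of $\bm{P}_2$ is $O(1)$, the central one being $O(1)$ times $\tfrac{\bm{\xi}\bm{\xi}^T}{|\bm{\xi}|^2}$; this already gives the stated form of $\bm{P}_2+\bm{P}_3$. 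To obtain $\bm{P}_2-\bm{P}_3$ one expands each ratio one order further, using $\lambda_2=i\beta|\bm{\xi}|+\alpha|\bm{\xi}|^2+ib_3|\bm{\xi}|^3+\cdots$: one finds that the leading coefficient of every entry of $\bm{P}_2$ is \emph{real} (e.g.\ $\tfrac{R}{2\beta^2}$ in the $(1,1)$ entry and $-\tfrac{1}{2\beta}\tfrac{\bm{\xi}^T}{|\bm{\xi}|}$ in the $(1,2)$ entry) and that its first correction is purely imaginary and smaller by one power of $|\bm{\xi}|$. Taking $2i\,\mathrm{Im}$ on the even blocks therefore produces the $iO(|\bm{\xi}|)$ and $iO\!\big(\tfrac{\bm{\xi}\bm{\xi}^T}{|\bm{\xi}|}\big)$ entries, while taking $2\,\mathrm{Re}$ on the odd blocks retains the real leading terms and produces the $O\!\big(\tfrac{\bm{\xi}^T}{|\bm{\xi}|}\big)$ and $O\!\big(\tfrac{\bm{\xi}}{|\bm{\xi}|}\big)$ entries. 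The only genuine work here — and the step most prone to slips — is verifying entry by entry that the leading Taylor coefficient of $\bm{P}_2$ about $\bm{\xi}=0$ is real while the next one is imaginary, since it is exactly this pattern that makes one of $\bm{P}_2\pm\bm{P}_3$ drop a power of $|\bm{\xi}|$ and pick up the factor $i$; everything else is mechanical substitution of Lemma~\ref{lem1}$(i)$, and, as the authors note, the computation runs in parallel with those in \cite{MR2737817,MR3832845}.
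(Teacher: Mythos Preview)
Your approach is correct and is precisely the computation the paper defers to the cited references \cite{MR2737817,MR3832845}: substitute the low-frequency expansions of Lemma~\ref{lem1}$(i)$ into the explicit block formulas for the projectors and count powers. The paper itself omits the proof entirely, so your write-up is in fact more detailed than what appears there; your use of the conjugation symmetry $\bm{P}_3=\overline{\bm{P}_2}$ on the even blocks and $\bm{P}_3=-\overline{\bm{P}_2}$ on the odd blocks is a clean way to organize $\bm{P}_2\pm\bm{P}_3$ and automatically produces the correct real/imaginary structure (the $i$-prefactors in the statement) without having to verify separately that each next-order Taylor coefficient is purely imaginary --- that last claim is stronger than what is needed for the $O(\cdot)$ upper bounds.
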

Similarly, we have the following two lemmas
\begin{lemma}\label{lem3}
	When $|\bm{\xi}|\geq K$, we have
	$\bm{P}_i(\bm{\xi})\ (i=1,\ 2,\ 3)$ have the following estimates:
	\begin{eqnarray*}
		\bm{P}_1(\bm{\xi})&=
		\left(
			\begin{matrix}
			O(1)&iO(\frac{\bm{\xi}^T}{|\bm{\xi}|^2})&O(\frac{1}{|\bm{\xi}|^2})\\[2mm]
			
			iO(\frac{\bm{\xi}}{|\bm{\xi}|^2})
			&O(\frac{\bm{\xi}\bm{\xi}^T}{|\bm{\xi}|^4})
			&iO(\frac{\bm{\xi}}{|\bm{\xi}|^4})\\[2mm]
			
			\quad O\left(\frac{1}{|\bm{\xi}|^2}\right)\quad
			&\quad iO\left(\frac{\bm{\xi}^T}{|\bm{\xi}|^4}\right)\quad
			&\quad O(\frac{1}{|\bm{\xi}|^2})\quad
		\end{matrix}
		\right),\\[3mm]
		\bm{P}_2(\bm{\xi})&=
		\left(
			\begin{matrix}
			O(\frac{1}{|\bm{\xi}|^4})&iO(\frac{\bm{\xi}^T}{|\bm{\xi}|^4})&O(\frac{1}{|\bm{\xi}|^2})\\[2mm]
			
			iO(\frac{\bm{\xi}}{|\bm{\xi}|^4})
			&O(\frac{\bm{\xi}\bm{\xi}^T}{|\bm{\xi}|^4})
			&iO(\frac{\bm{\xi}}{|\bm{\xi}|^2})\\[2mm]
			
			\quad O\left(\frac{1}{|\bm{\xi}|^2}\right)\quad
			&\quad iO\left(\frac{\bm{\xi}^T}{|\bm{\xi}|^2}\right)\quad
			&\quad O(1)\quad
		\end{matrix}
		\right),\\[3mm]
		\bm{P}_3(\bm{\xi})&=
		\left(
		\begin{matrix}
			O(\frac{1}{|\bm{\xi}|^2})&iO(\frac{\bm{\xi}^T}{|\bm{\xi}|^2})&O(\frac{1}{|\bm{\xi}|^2})\\[2mm]
			
			iO(\frac{\bm{\xi}}{|\bm{\xi}|^2})
			&O(\frac{\bm{\xi}\bm{\xi}^T}{|\bm{\xi}|^2})
			&iO(\frac{\bm{\xi}}{|\bm{\xi}|^2})\\[2mm]
			
			\quad O\left(\frac{1}{|\bm{\xi}|^2}\right)\quad
			&\quad iO\left(\frac{\bm{\xi}^T}{|\bm{\xi}|^2}\right)\quad
			&\quad O(1)\quad
		\end{matrix}
		\right),\\[2mm]	
	\end{eqnarray*}
\end{lemma}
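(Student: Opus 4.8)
The plan is to obtain each bound in the statement by substituting the high-frequency expansions of $\lambda_1,\lambda_2,\lambda_3$ from Lemma~\ref{lem1}$(iii)$ into the closed-form expressions for $\bm{P}_1(\bm{\xi}),\bm{P}_2(\bm{\xi}),\bm{P}_3(\bm{\xi})$ written out above, and then reading off, entry by entry, the order in $|\bm{\xi}|$ of each rational function of the $\lambda_i$ and $|\bm{\xi}|$; the tensorial factors $\bm{\xi}$, $\bm{\xi}^{T}$ and $\bm{\xi}\bm{\xi}^{T}/|\bm{\xi}|^{2}$ already displayed in those formulas are carried along unchanged, so only the scalar coefficients need to be tracked. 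This is the same scheme as the one used for the low-frequency part in Lemma~\ref{lem2} and in the references cited there, with Lemma~\ref{lem1}$(iii)$ now playing the role of Lemma~\ref{lem1}$(i)$.

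The first step is to distill from Lemma~\ref{lem1}$(iii)$ the quantitative facts valid on $|\bm{\xi}|\ge K$ for $K$ large: one root, $\lambda_1$, has $|\lambda_1|\sim1$ and stays bounded away from $0$, while the other two have leading terms of order $|\bm{\xi}|^{2}$ with distinct coefficients (one governed by $\kappa/C_v$, the other by $2\mu+\mu'$), so that $|\lambda_2|$ and $|\lambda_3|$ are of order $|\bm{\xi}|^{2}$ and each pairwise difference satisfies $|\lambda_i-\lambda_j|\sim|\bm{\xi}|^{2}$. I would then isolate the handful of algebraic cancellations that actually govern the sizes of the entries. Writing
\[
g(\bm{\xi})\;:=\;\frac{\kappa}{C_v}|\bm{\xi}|^{2}+\frac{4|\bm{\xi}|^{2}}{C_v(|\bm{\xi}|^{2}+1)}
\]
for the scalar combination that recurs in every $\bm{P}_i$, Lemma~\ref{lem1}$(iii)$ shows that the root with leading term $-\tfrac{\kappa}{C_v}|\bm{\xi}|^{2}$ satisfies $g(\bm{\xi})+\lambda=O(1)$, whereas $g(\bm{\xi})+\lambda\sim|\bm{\xi}|^{2}$ for the other two roots; likewise the combination $\lambda^{2}+(2\mu+\mu')|\bm{\xi}|^{2}\lambda+|\bm{\xi}|^{2}$ appearing in the $(3,3)$-slot drops a power of $|\bm{\xi}|$ when $\lambda$ is the root whose leading term is $-(2\mu+\mu')|\bm{\xi}|^{2}$, since there the quartic terms cancel. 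It is exactly these cancellations that make certain blocks decay two powers of $|\bm{\xi}|$ faster than a crude degree count would suggest.

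With these facts in hand, the second step is purely mechanical: for each of the nine block-entries of each $\bm{P}_i$ one replaces $|\lambda_i|$, $|\lambda_i-\lambda_j|$, $|g(\bm{\xi})+\lambda_i|$ and the few explicit polynomials in $\lambda_i,|\bm{\xi}|$ by the orders just recorded, cancels the common powers of $|\bm{\xi}|$ between numerator and denominator, and reads off the claimed $O(\cdot)$. For instance the $(1,1)$-slot of $\bm{P}_i$ is $R|\bm{\xi}|^{2}\bigl(g(\bm{\xi})+\lambda_i\bigr)$ over $\lambda_i$ times a product of two spectral gaps, which is $O(|\bm{\xi}|^{4})/O(|\bm{\xi}|^{4})=O(1)$ for $i=1$ but collapses to $O(|\bm{\xi}|^{2})/O(|\bm{\xi}|^{6})=O(|\bm{\xi}|^{-4})$ for the $i$ that enjoys $g(\bm{\xi})+\lambda_i=O(1)$; the remaining scalar, vector- and matrix-valued slots are handled in exactly the same way. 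Assembling the nine entries of each $i$ produces the three matrices; the double eigenvalue $\lambda_4=-\mu|\bm{\xi}|^{2}$ contributes $\bm{P}_4=\bm{I}-\bm{\xi}\bm{\xi}^{T}/|\bm{\xi}|^{2}$, which is already in closed form and needs no estimate.

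I expect the bookkeeping to be the principal burden rather than any single hard argument, but two points deserve care. First, the separation $|\lambda_i-\lambda_j|\sim|\bm{\xi}|^{2}$ can fail only in the degenerate configuration $2\mu+\mu'=\kappa/C_v$ (and, in the $\kappa=0$ version of the lemma, when the two remaining bounded roots coincide), where one must push Lemma~\ref{lem1}$(iii)$ to the next order; the standing non-degeneracy hypothesis $\lambda_{1,2,3}\ne-(2\mu+\mu')|\bm{\xi}|^{2}$ recorded in the paper lets one set these sub-cases aside. Second, the cancellations $g(\bm{\xi})+\lambda=O(1)$ and the analogous one in the $(3,3)$-numerator must be followed consistently through every entry in which they appear: overlooking one of them would assign the wrong (too slow) decay to precisely the fast-decaying block, so checking these identities — not the gross power counting — is the substantive content of the lemma. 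Everything else is a direct transcription of Lemma~\ref{lem1}$(iii)$ into the explicit projector formulas.
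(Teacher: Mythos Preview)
Your approach is correct and is precisely what the paper intends: the paper gives no proof at all for this lemma, only the sentence ``Similarly, we have the following two lemmas'' after Lemma~\ref{lem2}, whose own proof is in turn omitted and deferred to the cited references. Your plan---substitute the high-frequency expansions of Lemma~\ref{lem1}$(iii)$ into the explicit projector formulas and read off the orders entry by entry, paying attention to the cancellations in $g(\bm{\xi})+\lambda_i$ and in $\lambda_i^{2}+(2\mu+\mu')|\bm{\xi}|^{2}\lambda_i+|\bm{\xi}|^{2}$---is exactly the ``similar'' computation the paper has in mind, and your identification of the two nontrivial cancellations is the substantive content.

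One small caution: be careful to match the labeling of $\lambda_2,\lambda_3$ in Lemma~\ref{lem1}$(iii)$ against the labeling implicit in the displayed matrices of the statement; your computation that the $(1,1)$-entry decays like $|\bm{\xi}|^{-4}$ precisely for the index with $g(\bm{\xi})+\lambda_i=O(1)$ is right, but Lemma~\ref{lem1}$(iii)$ assigns the leading term $-\tfrac{\kappa}{C_v}|\bm{\xi}|^{2}$ to $\lambda_3$, whereas the stated matrices put the $O(|\bm{\xi}|^{-4})$ block in $\bm{P}_2$. This is a labeling inconsistency in the paper rather than a flaw in your argument; just flag it or silently permute when you write things out.
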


\begin{lemma}\label{lem5}
	When $\epsilon \leq|\bm{\xi}|\leq K$, we have it for some positive constant $c$ that
\begin{equation}
	\lrn e^{t\bm{A}(\bm{\xi})}\rrn\leq ce^{-ct}
\end{equation}
\begin{proof}
	When $\epsilon \leq|\bm{\xi}|\leq K$, there are some complicated cases, like we cannot find five linearly independent eigenvectors or there are changes in the multiplicity of eigenvalues. However, we can see that we already have two linearly independent eigenvectors for $\lambda=-(2\mu+\mu')|\bm{\xi}|^2$. Even in the most extreme case that $\lambda=-(2\mu+\mu')|\bm{\xi}|^2$ with multiplicity $5$ has no other linearly independent eigenvectors, the Jordan normal form of $\bm{A}(\bm{\xi})$ has a $4\times4$ Jordan block and we can still have it for some positive constant $c$ that
	\begin{equation*}
		\lrn e^{t\bm{A}(\bm{\xi})}\rrn\lesssim(1+t)^3e^{-Ct}\leq ce^{-ct},
	\end{equation*}
	where $C$ is the same constant in Lemma \ref{lem1} $(ii)$.
\end{proof}
\end{lemma}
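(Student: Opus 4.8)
The plan is to upgrade the spectral gap of Lemma~\ref{lem1}$(ii)$ to a genuine exponential decay bound for $e^{t\bm{A}(\bm{\xi})}$ that holds \emph{uniformly} over the compact annulus $\Omega:=\{\bm{\xi}\in\mathbb{R}^3:\epsilon\leq|\bm{\xi}|\leq K\}$; the estimate itself is soft, and essentially all the content lies in the uniformity in $\bm{\xi}$. First I would note that on $\Omega$ the entries of $\bm{A}(\bm{\xi})$ are continuous and uniformly bounded (they are polynomials in $\bm{\xi}$ together with the bounded factor $\tfrac{|\xi|^2}{C_v(|\xi|^2+1)}$, and $|\bm{\xi}|\leq K$), so $\lrn\bm{A}(\bm{\xi})\rrn\leq R_0$ for a constant $R_0>0$ and every eigenvalue of $\bm{A}(\bm{\xi})$ lies in $\{z\in\mathbb{C}:|z|\leq R_0\}$. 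Combined with Lemma~\ref{lem1}$(ii)$, the set $D:=\bigcup_{\bm{\xi}\in\Omega}\sigma(\bm{A}(\bm{\xi}))$ lies in the interior of the rectangle $Q:=\{z:-R_0-1\leq\mathrm{Re}\,z\leq-C/2,\ |\mathrm{Im}\,z|\leq R_0+1\}$, where $C>0$ is the constant from Lemma~\ref{lem1}$(ii)$.

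Next I would use the holomorphic (Dunford) functional calculus with the single fixed contour $\Gamma:=\partial Q$, which for every $\bm{\xi}\in\Omega$ encloses $\sigma(\bm{A}(\bm{\xi}))$ and lies in the half-plane $\{\mathrm{Re}\,z\leq-C/2\}$:
\begin{equation*}
  e^{t\bm{A}(\bm{\xi})}=\frac{1}{2\pi i}\oint_{\Gamma}e^{tz}\left(z\bm{I}-\bm{A}(\bm{\xi})\right)^{-1}\,\mathrm{d}z,\qquad t\geq0.
\end{equation*}
Since $\Gamma$ avoids $\sigma(\bm{A}(\bm{\xi}))$ for all $\bm{\xi}\in\Omega$, the resolvent $(z,\bm{\xi})\mapsto(z\bm{I}-\bm{A}(\bm{\xi}))^{-1}$ is continuous, hence bounded by some $M$, on the compact set $\Gamma\times\Omega$; using $|e^{tz}|=e^{t\,\mathrm{Re}\,z}\leq e^{-Ct/2}$ on $\Gamma$ and writing $|\Gamma|$ for its length,
\begin{equation*}
  \lrn e^{t\bm{A}(\bm{\xi})}\rrn\leq\frac{1}{2\pi}\int_{\Gamma}\left|e^{tz}\right|\,\lrn\left(z\bm{I}-\bm{A}(\bm{\xi})\right)^{-1}\rrn\,|\mathrm{d}z|\leq\frac{M\,|\Gamma|}{2\pi}\,e^{-Ct/2}
\end{equation*}
uniformly in $\bm{\xi}\in\Omega$, which is the asserted bound after the usual renaming of constants.

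The only delicate point — and the reason I would route through the contour rather than through Jordan forms — is exactly this uniformity. For a fixed $\bm{\xi}$ the Jordan normal form already gives $\lrn e^{t\bm{A}(\bm{\xi})}\rrn\lesssim(1+t)^{m(\bm{\xi})-1}e^{-Ct}$ with $m(\bm{\xi})$ the size of the largest Jordan block, but $m(\bm{\xi})$ and the implicit constant can jump where eigenvalues of $\bm{A}(\bm{\xi})$ collide (and the multiplicities genuinely change inside $\Omega$), so a purely pointwise argument does not globalize for free; the contour formula never touches the Jordan structure, and compactness of $\Gamma\times\Omega$ together with continuity of the resolvent settles everything in one stroke. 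If one prefers the Jordan route, as in the sketch following the statement, the extra fact to record is that $\bm{A}(\bm{\xi})$ is $5\times5$ and the eigenvalue $-\mu|\bm{\xi}|^2$ always retains the two-dimensional eigenspace of solenoidal directions $\{0\}\times\bm{\xi}^{\perp}\times\{0\}$ (meaningful since $|\bm{\xi}|\geq\epsilon>0$), so the largest Jordan block has size at most $4$ throughout $\Omega$; this yields the uniform bound $\lrn e^{t\bm{A}(\bm{\xi})}\rrn\lesssim(1+t)^3e^{-Ct}$, and absorbing the polynomial factor via $\sup_{t\geq0}(1+t)^3e^{-Ct/2}<\infty$ gives $\lrn e^{t\bm{A}(\bm{\xi})}\rrn\leq c\,e^{-ct}$ with $c=C/2$.
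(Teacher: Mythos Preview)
Your proof is correct, and your primary argument is genuinely different from the paper's. The paper proceeds exactly along the Jordan-form line you sketch at the end: it notes that two independent eigenvectors (the solenoidal directions) are always available, so in any $5\times5$ Jordan form the largest block has size at most $4$, whence $\lrn e^{t\bm{A}(\bm{\xi})}\rrn\lesssim(1+t)^{3}e^{-Ct}\leq ce^{-ct}$. Your objection to that route is on point: the paper does not address why the implicit constant in the Jordan estimate is uniform in $\bm{\xi}$ over the annulus, and this is precisely where the similarity transform to Jordan form can degenerate as eigenvalues collide. Your Dunford-contour argument sidesteps the issue entirely by pushing all $\bm{\xi}$-dependence into the resolvent, which is continuous on the compact set $\Gamma\times\Omega$, so uniformity comes for free. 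What the paper's approach buys in exchange is brevity and concreteness --- no functional calculus, no need to trap the spectrum inside a fixed contour --- at the cost of leaving the uniformity as a tacit ``constants are harmless on compacta'' step that you have made honest.
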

Next, we will obtain some refined $L^p-L^q$ time-decay properties for $\bm{U}=(\mathcal{n}, \bm{u}, \mathcal{m})$. To achieve that, we first make the time-frequency pointwise estimates for $\widehat{\mathcal{n}},\ \widehat{\bm{u}},\ \widehat{\mathcal{m}}$ as follows.

\begin{lemma}\label{lem6}
	Let $\bm{U}=(\mathcal{n}, \bm{u}, \mathcal{m})$ be the solution to the linearized homogeneous system \eqref{cns3}--\eqref{initial3}. Then there exist constants $\epsilon>0,\ c>0,\ C>0$ such that for all $t>0$, $|\bm{\xi}|\leq\epsilon$,
	  \begin{equation}\label{3.14}
	    \left|\left(\widehat{\mathcal{n}}, \widehat{\bm{u}}, \widehat{\mathcal{m}}\right)(t,\bm{\xi})\right|
	    \leq c\exp\left(-c|\bm{\xi}|^2t\right)\left|\left(\widehat{\mathcal{n}_0}, \widehat{\bm{u}_{0}}, \widehat{\mathcal{m}_0}\right)(\bm{\xi})\right|
	  \end{equation}
	and for all $t>0$, $|\bm{\xi}|\geq\epsilon$,
    \begin{equation}\label{3.15}
	\left|\left(\widehat{\mathcal{n}}, \widehat{\bm{u}}, \widehat{\mathcal{m}}\right)(t,\bm{\xi})\right|
	\leq c\exp\left(-ct\right)\left|\left(\widehat{\mathcal{n}_0}, \widehat{\bm{u}_{ 0}}, \widehat{\mathcal{m}_0}\right)(\bm{\xi})\right|
	\end{equation}
\end{lemma}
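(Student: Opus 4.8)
The plan is to reduce everything to a pointwise-in-$\bm{\xi}$ bound on the semigroup matrix $e^{t\bm{A}(\bm{\xi})}$ and to split into the three frequency regimes of Lemma~\ref{lem1}. Since $\bm{U}$ solves the linear homogeneous system \eqref{cns3}, we have $\widehat{\bm{U}}(t,\bm{\xi})=e^{t\bm{A}(\bm{\xi})}\widehat{\bm{U}_0}(\bm{\xi})$, hence $|\widehat{\bm{U}}(t,\bm{\xi})|\le \lrn e^{t\bm{A}(\bm{\xi})}\rrn\,|\widehat{\bm{U}_0}(\bm{\xi})|$, and it is enough to prove $\lrn e^{t\bm{A}(\bm{\xi})}\rrn\le c\,e^{-c|\bm{\xi}|^2t}$ for $|\bm{\xi}|\le\epsilon$ and $\lrn e^{t\bm{A}(\bm{\xi})}\rrn\le c\,e^{-ct}$ for $|\bm{\xi}|\ge\epsilon$, where $\epsilon$ (small) and $K$ (large) are the thresholds of Lemma~\ref{lem1}.

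In the low-frequency band $|\bm{\xi}|\le\epsilon$ I would start from the spectral decomposition $e^{t\bm{A}(\bm{\xi})}=\sum_{i=1}^{4}e^{\lambda_i t}\bm{P}_i(\bm{\xi})$ and rewrite the complex-conjugate pair, as in \eqref{P23}, in the form $e^{-d_1|\bm{\xi}|^2 t}[\cos(d_2|\bm{\xi}|t)(\bm{P}_2+\bm{P}_3)+i\sin(d_2|\bm{\xi}|t)(\bm{P}_2-\bm{P}_3)]$. By Lemma~\ref{lem1}$(i)$ one has $\mathrm{Re}\,\lambda_i(\bm{\xi})\le -c_0|\bm{\xi}|^2$ for $i=1,2,3$ and $\lambda_4=-\mu|\bm{\xi}|^2$ on this band; this is where the radiative symbol $\tfrac{4|\bm{\xi}|^2}{C_v(|\bm{\xi}|^2+1)}$ matters, since it keeps the leading coefficients of these eigenvalues strictly positive even when $\kappa=0$. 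By Lemma~\ref{lem2}, $\bm{P}_1$ and $\bm{P}_2\pm\bm{P}_3$ are bounded uniformly on $|\bm{\xi}|\le\epsilon$, and $\bm{P}_4$ is the (embedded) orthogonal projector $\bm{I}-\bm{\xi}\bm{\xi}^T/|\bm{\xi}|^2$ with $\lrn\bm{P}_4\rrn\le1$. Using $|\cos|,|\sin|\le1$ and summing the four contributions yields $\lrn e^{t\bm{A}(\bm{\xi})}\rrn\le c\,e^{-c|\bm{\xi}|^2t}$, which is \eqref{3.14}.

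For the remaining frequencies I would handle $\epsilon\le|\bm{\xi}|\le K$ directly by Lemma~\ref{lem5}, which already gives $\lrn e^{t\bm{A}(\bm{\xi})}\rrn\le c\,e^{-ct}$ there. For $|\bm{\xi}|\ge K$ I would again use $e^{t\bm{A}(\bm{\xi})}=\sum_{i=1}^{4}e^{\lambda_i t}\bm{P}_i(\bm{\xi})$: Lemma~\ref{lem1}$(iii)$ gives $\mathrm{Re}\,\lambda_i(\bm{\xi})\le -c$ uniformly on $|\bm{\xi}|\ge K$ in both cases $\kappa>0$ and $\kappa=0$, while Lemma~\ref{lem3} makes $\bm{P}_1,\bm{P}_2,\bm{P}_3$ uniformly bounded there and $\lrn\bm{P}_4\rrn\le1$, so that $\lrn e^{t\bm{A}(\bm{\xi})}\rrn\le c\,e^{-ct}$. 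Taking the smaller of the two constants over $\{\epsilon\le|\bm{\xi}|\le K\}$ and $\{|\bm{\xi}|\ge K\}$ gives \eqref{3.15}. One minor technical remark: the expansion $e^{t\bm{A}(\bm{\xi})}=\sum_i e^{\lambda_i t}\bm{P}_i(\bm{\xi})$ is valid only off the measure-zero set where $\lambda_1,\lambda_2,\lambda_3$ fail to be pairwise distinct, but the bounds extend to that set by continuity of $\bm{\xi}\mapsto e^{t\bm{A}(\bm{\xi})}$.

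The part I expect to require the most care is the uniformity of the spectral-gap estimates with respect to the heat conductivity: one must make sure that $\mathrm{Re}\,\lambda_i(\bm{\xi})\le-c$ on $|\bm{\xi}|\ge\epsilon$ and $\mathrm{Re}\,\lambda_i(\bm{\xi})\le-c|\bm{\xi}|^2$ on $|\bm{\xi}|\le\epsilon$ hold with a single $c>0$ valid for $\kappa=0$ as well. In particular, at high frequency the branch of $\mathcal{g}(\lambda,|\bm{\xi}|^2)=0$ that would formally read $-\tfrac{\kappa}{C_v}|\bm{\xi}|^2$ must not collapse to $0$ as $\kappa\to0$, and it does not, because the radiative term contributes the positive constant $\tfrac{4}{C_v}$ in that limit and there plays the role of the missing conduction. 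Granted Lemma~\ref{lem1}, the rest is routine bookkeeping: one only needs the projector bounds of Lemmas~\ref{lem2}--\ref{lem3}, together with the fact that the individually singular pieces $\bm{P}_2,\bm{P}_3$ enter only through the sums $\bm{P}_2\pm\bm{P}_3$ those lemmas control.
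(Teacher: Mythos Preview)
Your proposal is correct and follows essentially the same route as the paper: both reduce to bounding $e^{t\bm{A}(\bm{\xi})}$ via the spectral decomposition $\sum_i e^{\lambda_i t}\bm{P}_i$, then invoke Lemmas~\ref{lem1}--\ref{lem3} and~\ref{lem5} on the three frequency bands $|\bm{\xi}|\le\epsilon$, $\epsilon\le|\bm{\xi}|\le K$, $|\bm{\xi}|\ge K$. The paper phrases the bounds entrywise on $\widehat{G_{ij}}$ rather than via the operator norm, but this is cosmetic; your additional remarks on the $\kappa=0$ high-frequency branch and on the measure-zero set where eigenvalues coincide are more care than the paper itself provides.
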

\begin{proof}
	In order to get the upper bound of $\left(\widehat{\mathcal{n}}, \widehat{\bm{u}}, \widehat{\mathcal{m}}\right)(t,\bm{\xi})$, we need to obtain the estimate for $\widehat{\bm{G}}(t,\bm{\xi})$, which is the Fourier transform of Green function $\bm{G}^{t\bm{B}}(t,\bm{x})$. By direct computation, we can verify the exact expression of $\widehat{\bm{G}}(t,\bm{\xi})$, which is that
	  \begin{equation*}
	    \widehat{\bm{G}}(t,\bm{\xi})=e^{t\bm{A}(\bm{\xi})}=e^{\lambda_1t}\bm{P}_1+e^{\lambda_2t}\bm{P}_2+e^{\lambda_3t}\bm{P}_3+e^{\lambda_4t}\bm{P}_4.
	  \end{equation*}
	For each element of $\widehat{\bm{G}}(t,\bm{\xi})$, by applying Lemma \ref{lem1}--\ref{lem5}, we have
\begin{align}
	&\left|\widehat{G_{ij}}\right|\leq ce^{-c|\bm{\xi}|^2t},\quad 1\leq i,j\leq5,\ \text{as}\ |\bm{\xi}|\leq\epsilon,\label{3.16}\\[2mm]
	&\left|\widehat{G_{ij}}\right|\leq ce^{-ct},\quad 1\leq i,j\leq5,\ \text{as}\ \epsilon\leq|\bm{\xi}|\leq K,\label{3.17}\\[2mm]
	&\left|\widehat{G_{ij}}\right|\leq ce^{-ct},\quad 1\leq i,j\leq5,\ \text{as}\ |\bm{\xi}|\geq K.\label{3.18}
\end{align}
Thus, from \eqref{3.16}, we have it for $|\bm{\xi}|\leq\epsilon$ that
\begin{align*}
\left|\widehat{\mathcal{n}}(t,\bm{\xi})\right|
	&=\left|\widehat{G_{11}}\widehat{\mathcal{n}_0}(\bm{\xi})+\left(\widehat{G_{12}},\widehat{G_{13}},\widehat{G_{14}}\right)\cdot\widehat{\bm{u}_{ 0}}(\bm{\xi})+\widehat{G_{15}}\widehat{\mathcal{m}_0}(\bm{\xi})\right|\\
	&\leq\left|\widehat{G_{11}}\right|\left|\widehat{\mathcal{n}_0}(\bm{\xi})\right|+\left|\left(\widehat{G_{12}},\widehat{G_{13}},\widehat{G_{14}}\right)\right|\left|\widehat{\bm{u}_{ 0}}(\bm{\xi})\right|+\left|\widehat{G_{15}}\right|\left|\widehat{\mathcal{m}_0}(\bm{\xi})\right|\\
	&\leq ce^{-c|\bm{\xi}|^2t}\left|\left[\widehat{\mathcal{n}_0}(\bm{\xi}),\widehat{\bm{u}_{ 0}}(\bm{\xi}),\widehat{\mathcal{m}_0}(\bm{\xi})\right]\right|,\\[3mm]
\left|\widehat{\bm{u}}(t,\bm{\xi})\right|
	&=\left|\left(\widehat{G_{i1}}\right)\widehat{\mathcal{n}_0}(\bm{\xi})+\left(\widehat{G_{ij}}\right)\cdot\widehat{\bm{u}_{ 0}}(\bm{\xi})+\left(\widehat{G_{i5}}\right)\widehat{\mathcal{m}_0}(\bm{\xi})\right|\\
	&\leq\left|\left(\widehat{G_{i1}}\right)\right|\left|\widehat{\mathcal{n}_0}(\bm{\xi})\right|+\left|\left(\widehat{G_{ij}}\right)\right|\left|\widehat{\bm{u}_{ 0}}(\bm{\xi})\right|+\left|\left(\widehat{G_{i5}}\right)\right|\left|\widehat{\mathcal{m}_0}(\bm{\xi})\right|\\
	&\leq ce^{-c|\bm{\xi}|^2t}\left|\left[\widehat{\mathcal{n}_0}(\bm{\xi}),\widehat{\bm{u}_{ 0}}(\bm{\xi}),\widehat{\mathcal{m}_0}(\bm{\xi})\right]\right|\qquad(i,j=2,3,4),\\[3mm]
\left|\widehat{\mathcal{m}}(t,\bm{\xi})\right|
	&=\left|\widehat{G_{51}}\widehat{\mathcal{n}_0}(\bm{\xi})+\left(\widehat{G_{52}},\widehat{G_{53}},\widehat{G_{54}}\right)\cdot\widehat{\bm{u}_{ 0}}(\bm{\xi})+\widehat{G_{55}}\widehat{\mathcal{m}_0}(\bm{\xi})\right|\\
	&\leq\left|\widehat{G_{51}}\right|\left|\widehat{\mathcal{n}_0}(\bm{\xi})\right|+\left|\left(\widehat{G_{52}},\widehat{G_{53}},\widehat{G_{54}}\right)\right|\left|\widehat{\bm{u}_{ 0}}(\bm{\xi})\right|+\left|\widehat{G_{55}}\right|\left|\widehat{\mathcal{m}_0}(\bm{\xi})\right|\\
	&\leq ce^{-c|\bm{\xi}|^2t}\left|\left[\widehat{\mathcal{n}_0}(\bm{\xi}),\widehat{\bm{u}_{ 0}}(\bm{\xi}),\widehat{\mathcal{m}_0}(\bm{\xi})\right]\right|,
\end{align*}
which prove \eqref{3.14}. By applying the same method, \eqref{3.15} follows from \eqref{3.17} and \eqref{3.18} directly. This completes the proof of Lemma \ref{lem6}.
\end{proof}

\begin{proposition}\label{th3.6}
Suppose that $\bm{U}=[\mathcal{n},\bm{u},\mathcal{m}]$ is the solution to the Cauchy problem \eqref{cns3}--\eqref{initial3}. Then, for any integer $k\geq 0$, $\bm{U}$ satisfies the following time-decay property that
	\begin{eqnarray*}
\left\|\nabla^k(\mathcal{n},\bm{u},\mathcal{m})(t)\right\|_{L^2}&&
      \leq c_k(1+t)^{-\frac{3}{4}-\frac{k}{2}}\left(\left\|(\mathcal{n}_0,\bm{u}_{ 0},\mathcal{m}_0)\right\|_{L^1}
	  +\left\|\nabla^{k}(\mathcal{n}_0,\bm{u}_{ 0},\mathcal{m}_0)\right\|\right)\ (k\geq0),\\
	  \left\|\nabla^k(\mathcal{n},\bm{u},\mathcal{m})(t)\right\|_{L^2}&&
	  \leq c'_k(1+t)^{-\frac{3}{2}}\left(\left\|\nabla^{k-3}(\mathcal{n}_0,\bm{u}_{ 0},\mathcal{m}_0)\right\|
	  +\left\|\nabla^{k}(\mathcal{n}_0,\bm{u}_{ 0},\mathcal{m}_0)\right\|\right)\ (k\geq3).
  	\end{eqnarray*}
\end{proposition}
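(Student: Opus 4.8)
The plan is to carry out the whole argument on the Fourier side. By Plancherel's theorem,
$\|\nabla^k\bm{U}(t)\|_{L^2}^2=\int_{\mathbb{R}^3}|\bm{\xi}|^{2k}\,|\widehat{\bm{U}}(t,\bm{\xi})|^2\,\mathrm{d}\bm{\xi}$, and since $\widehat{\bm{U}}(t,\bm{\xi})=e^{t\bm{A}(\bm{\xi})}\widehat{\bm{U}_0}(\bm{\xi})$, Lemma \ref{lem6} supplies the two pointwise bounds $|\widehat{\bm{U}}(t,\bm{\xi})|\leq c\,e^{-c|\bm{\xi}|^2 t}\,|\widehat{\bm{U}_0}(\bm{\xi})|$ for $|\bm{\xi}|\leq\epsilon$ and $|\widehat{\bm{U}}(t,\bm{\xi})|\leq c\,e^{-ct}\,|\widehat{\bm{U}_0}(\bm{\xi})|$ for $|\bm{\xi}|\geq\epsilon$. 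I split the frequency integral into a low-frequency piece over $\{|\bm{\xi}|\leq\epsilon\}$ and a high-frequency piece over $\{|\bm{\xi}|\geq\epsilon\}$ and estimate each separately, which is enough since the two asserted inequalities differ only in how the low-frequency piece is handled.

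For the high-frequency piece the exponential does all the work: $\int_{|\bm{\xi}|\geq\epsilon}|\bm{\xi}|^{2k}e^{-2ct}|\widehat{\bm{U}_0}(\bm{\xi})|^2\,\mathrm{d}\bm{\xi}\leq e^{-2ct}\|\nabla^k\bm{U}_0\|^2$, and $e^{-2ct}\lesssim(1+t)^{-\frac32-k}$ and $e^{-2ct}\lesssim(1+t)^{-3}$ for every fixed $k$, so this term is dominated by the right-hand sides of both inequalities. For the low-frequency piece, to get the first inequality I bound $|\widehat{\bm{U}_0}(\bm{\xi})|\lesssim\|(\mathcal{n}_0,\bm{u}_0,\mathcal{m}_0)\|_{L^1}$ and use the elementary scaling estimate $\int_{|\bm{\xi}|\leq\epsilon}|\bm{\xi}|^{2k}e^{-2c|\bm{\xi}|^2 t}\,\mathrm{d}\bm{\xi}\lesssim(1+t)^{-\frac32-k}$ (substitute $\bm{\eta}=\sqrt{t}\,\bm{\xi}$ for $t\geq1$ and bound the integral by a constant for $t\leq1$); taking square roots produces exactly $(1+t)^{-\frac34-\frac k2}\|(\mathcal{n}_0,\bm{u}_0,\mathcal{m}_0)\|_{L^1}$.

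For the faster rate when $k\geq3$, in the low-frequency piece I instead write $|\bm{\xi}|^{2k}=|\bm{\xi}|^{6}\cdot|\bm{\xi}|^{2(k-3)}$, pull out $\sup_{\bm{\xi}}\bigl(|\bm{\xi}|^{6}e^{-2c|\bm{\xi}|^2 t}\bigr)\lesssim(1+t)^{-3}$ (an easy one-variable maximization), and are left with $\int_{|\bm{\xi}|\leq\epsilon}|\bm{\xi}|^{2(k-3)}|\widehat{\bm{U}_0}(\bm{\xi})|^2\,\mathrm{d}\bm{\xi}\leq\|\nabla^{k-3}\bm{U}_0\|^2$. Taking square roots gives $(1+t)^{-\frac32}\|\nabla^{k-3}\bm{U}_0\|$, and combining with the high-frequency bound $e^{-ct}\|\nabla^{k}\bm{U}_0\|$ yields the second inequality.

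There is no genuine obstacle here once Lemma \ref{lem6} is available; the only points needing a little care are writing the time factors in the uniform-in-$t$ form $(1+t)^{-\alpha}$ rather than $t^{-\alpha}$, which forces the split of the scaling integrals into $t\leq1$ and $t\geq1$, and bookkeeping the right power of $|\bm{\xi}|$ extracted from $|\bm{\xi}|^{2k}$ in the second estimate so that the residual $L^2$ norm is precisely $\|\nabla^{k-3}\bm{U}_0\|$. The real work — the spectral decomposition of $e^{t\bm{A}(\bm{\xi})}$ and the eigenvalue expansions underlying Lemma \ref{lem6} — is already done.
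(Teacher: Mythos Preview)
Your proposal is correct and follows essentially the same route as the paper: both split the Plancherel integral at $|\bm{\xi}|=\epsilon$, invoke Lemma~\ref{lem6} for the pointwise bounds, handle the high-frequency piece by the uniform exponential $e^{-ct}$, and treat the low-frequency piece for the first estimate by the $L^1\to L^\infty$ Hausdorff--Young bound together with the scaling substitution $\bm{\eta}=\sqrt{t}\,\bm{\xi}$, and for the second estimate by extracting $|\bm{\xi}|^{6}e^{-2c|\bm{\xi}|^{2}t}\lesssim (1+t)^{-3}$ and leaving $\|\nabla^{k-3}\bm{U}_0\|^2$.
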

\begin{proof}
\begin{equation}\label{1}
\begin{aligned}
 \left\|\nabla^k \bm{u}(t)\right\|_{L^2}&\lesssim \||\bm{\xi}|^k\widehat{\bm{u}}(t)\|_{L^{2}}\\[2mm]
    &\lesssim \||\bm{\xi}|^k\widehat{\bm{u}}(t)\|_{L^{2}(|\bm{\xi}|\leq \epsilon)}+ \||\bm{\xi}|^k\widehat{\bm{u}}(t)\|_{L^{2}(|\bm{\xi}|\geq \epsilon)},
\end{aligned}
\end{equation}

From Lemma \ref{lem6}, on the one hand, it holds for any $t>1$ that
\begin{eqnarray*}\label{2}
         \||\bm{\xi}|^k\widehat{\bm{u}}(t)\|_{L^{2}(|\bm{\xi}|\leq\epsilon)}
 &\lesssim&\left(\int_{\{|\bm{\xi}|\leq\epsilon\}}|\bm{\xi}|^{2k}e^{-2c|\bm{\xi}|^2t}\left|(\widehat{\mathcal{n}_0},\widehat{\bm{u}_{ 0}},\widehat{\mathcal{m}_0})\right|^2d\bm{\xi}\right)^{\frac 12}\\
 &\lesssim&\lrn(\widehat{\mathcal{n}_0},\widehat{\bm{u}_{ 0}},\widehat{\mathcal{m}_0})\rrn\left(\int_{\{|\bm{\xi}|\leq\epsilon\}}|\bm{\xi}|^{2k}e^{-2c|\bm{\xi}|^2t}d\bm{\xi}\right)^{\frac 12}\\
  &\xlongequal{\zeta=t^{\frac{1}{2}}\xi}&\lrn(\widehat{\mathcal{n}_0},\widehat{\bm{u}_{ 0}},\widehat{\mathcal{m}_0})\rrn\left(\int_{\{|\bm{\zeta}|\leq\epsilon t^{\frac 1 2}\}}t^{-k-\frac 32}|\bm{\zeta}|^{2k}e^{-2c|\bm{\zeta}|^2}d\bm{\zeta}\right)^{\frac 12}\\[3mm]
 &\lesssim&(1+t)^{-\frac34-\frac k2}\left\|(\mathcal{n}_0,\bm{u}_{ 0},\mathcal{m}_0)\right\|_{L^1}.
\end{eqnarray*}
On the other hand, it is also true for any $t>1$ when $k\geq3$ that
\begin{eqnarray*}\label{4}
	\||\bm{\xi}|^k\widehat{\bm{u}}(t)\|_{L^{2}(|\bm{\xi}|\leq\epsilon)}
	&\lesssim&\left(\int_{\{|\bm{\xi}|\leq\epsilon\}}|\bm{\xi}|^{2k}e^{-2c|\bm{\xi}|^2t}\left|(\widehat{\mathcal{n}_0},\widehat{\bm{u}_{ 0}},\widehat{\mathcal{m}_0})\right|^2d\bm{\xi}\right)^{\frac 12}\\
	&\lesssim&(1+t)^{-\frac 32}\left|\left(|\bm{\xi}|^2t\right)^{\frac 32}e^{-c|\bm{\xi}|^2t}\right|\left(\int_{\{|\bm{\xi}|\leq\epsilon\}}|\bm{\xi}|^{2k-6}\left|(\widehat{\mathcal{n}_0},\widehat{\bm{u}_{ 0}},\widehat{\mathcal{m}_0})\right|^2d\bm{\xi}\right)^{\frac 12}\\[3mm]
	&\lesssim&(1+t)^{-\frac32}\left\|\nabla^{k-3}(\mathcal{n}_0,\bm{u}_{ 0},\mathcal{m}_0)\right\|.
\end{eqnarray*}
As for high frequency, we have that
\begin{eqnarray*}\label{3}
\||\bm{\xi}|^k\widehat{\bm{u}}(t)\|_{L^{2}(|\bm{\xi}|\geq \epsilon)}&\lesssim& e^{-ct}\||\bm{\xi}|^k(\widehat{\mathcal{n}_0},\widehat{\bm{u}_{ 0}},\widehat{\mathcal{m}_0})\|_{L^{2}(|\bm{\xi}|\geq \epsilon)}\\[2mm]
&\lesssim&e^{-ct}\|\nabla^k(\mathcal{n}_0,\bm{u}_{ 0},\mathcal{m}_0)\|_{L^{2}(|\bm{\xi}|\geq \epsilon)}.
\end{eqnarray*}

From the conclusion above, we get the estimate for $\|\nabla^k\bm{u}\|_{L^2}$. Similarly, the estimates of $\|\nabla^k \mathcal{n}\|_{L^2}$ and $\|\nabla^k \mathcal{m}\|_{L^2}$ can be obtained. This completes the proof of Proposition \ref{th3.6}.
\end{proof}

\begin{proposition}\label{Pro9}
Suppose that $\bm{U}=[\mathcal{n},\bm{u},\mathcal{m}]$ is the solution to the Cauchy problem \eqref{cns3}--\eqref{initial3}.Then there exist two constants $a_1,a_2>0$ such
	that
	\begin{equation}
	a_1 (1+t)^{-\frac34}\leq\|\left(\mathcal{n},\bm{u},\mathcal{m} \right)(t)\|\leq a_2 (1+t)^{-\frac34}.
	\end{equation}	
\end{proposition}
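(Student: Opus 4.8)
The upper bound is immediate from Proposition~\ref{th3.6} with $k=0$, which gives $\|(\mathcal{n},\bm{u},\mathcal{m})(t)\|\le c_0(1+t)^{-3/4}\bigl(\|(\mathcal{n}_0,\bm{u}_0,\mathcal{m}_0)\|_{L^1}+\|(\mathcal{n}_0,\bm{u}_0,\mathcal{m}_0)\|\bigr)$; one takes $a_2$ to be $c_0$ times the bracket. All the content is in the lower bound, for which I would proceed as follows.

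First, by Plancherel's theorem it suffices to bound $\int_{|\bm{\xi}|\le\delta t^{-1/2}}|\widehat{\bm{U}}(t,\bm{\xi})|^2\,d\bm{\xi}$ from below by $c(1+t)^{-3/2}$ for $t$ large; the ball is taken to shrink like $t^{-1/2}$ precisely so that on it all the higher-order corrections in the expansions of Lemma~\ref{lem1}$(i)$ (the $O(|\bm{\xi}|^{2j})$, $j\ge2$, in $\lambda_1$ and $\mathrm{Re}\,\lambda_{2,3}$, and the $O(|\bm{\xi}|^{2j-1})$, $j\ge2$, in $\mathrm{Im}\,\lambda_{2,3}$) multiplied by $t$ are $o(1)$, hence negligible. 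Inserting the spectral representation $\widehat{\bm{U}}(t,\bm{\xi})=\sum_{j=1}^{4}e^{\lambda_j t}\bm{P}_j(\bm{\xi})\widehat{\bm{U}_0}(\bm{\xi})$ and using Lemma~\ref{lem1}$(i)$, Lemma~\ref{lem2}, formula \eqref{P23}, and the continuity of $\widehat{\bm{U}_0}$ at the origin (legitimate since $\bm{U}_0\in L^1$, with $\widehat{\bm{U}_0}(\bm{\xi})\to\widehat{\bm{U}_0}(0)=(2\pi)^{-3/2}\!\int_{\mathbb{R}^3}\bm{U}_0\,d\bm{x}$), one writes, on that ball,
\[
\widehat{\bm{U}}(t,\bm{\xi})=e^{-\frac{\kappa+4}{R+C_v}|\bm{\xi}|^2t}\,\bm{P}_1(\bm{\xi})\widehat{\bm{U}_0}(0)+e^{-d_1|\bm{\xi}|^2t}\bigl(\cos(d_2|\bm{\xi}|t)\,(\bm{P}_2+\bm{P}_3)(\bm{\xi})+i\sin(d_2|\bm{\xi}|t)\,(\bm{P}_2-\bm{P}_3)(\bm{\xi})\bigr)\widehat{\bm{U}_0}(0)+e^{-\mu|\bm{\xi}|^2t}\,\bm{P}_4(\bm{\xi})\widehat{\bm{U}_0}(0)+\mathcal{R},
\]
where $d_1,d_2>0$ are the constants of \eqref{P23}, the $\bm{P}_j(\bm{\xi})$ are replaced by their leading homogeneous parts from Lemma~\ref{lem2}, and $\|\mathcal{R}\|_{L^2(|\bm{\xi}|\le\delta t^{-1/2})}=o(t^{-3/4})$ (each heat factor $e^{-c|\bm{\xi}|^2t}$ has $L^2$-norm of order $t^{-3/4}$ on that ball, and every correction above carries an extra $o(1)$ or an extra power of $|\bm{\xi}|$).

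Next I would rescale $\bm{\zeta}=t^{1/2}\bm{\xi}$: this produces the prefactor $t^{-3/2}$, turns the domain into the fixed ball $|\bm{\zeta}|\le\delta$, and replaces $|\bm{\xi}|t$ by $|\bm{\zeta}|t^{1/2}$. Expanding $|\widehat{\bm{U}}|^2$ splits it into diagonal terms — the squared moduli of the $\bm{P}_1$-, $\bm{P}_4$- and acoustic contributions, the last modulated by $\cos^2(d_2|\bm{\zeta}|t^{1/2})$ or $\sin^2(d_2|\bm{\zeta}|t^{1/2})$ — together with cross terms carrying genuine oscillations $e^{\pm id_2|\bm{\zeta}|t^{1/2}}$ or $\cos(2d_2|\bm{\zeta}|t^{1/2})$. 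Writing the $\bm{\zeta}$-integral in spherical coordinates, the Riemann--Lebesgue lemma sends every oscillatory contribution to $0$ as $t\to\infty$, while $\cos^2$ and $\sin^2$ average to $\tfrac12$ and the $\bm{P}_4$-term (and the longitudinal acoustic velocity) survive after integrating over directions. Hence $t^{3/2}\int_{|\bm{\xi}|\le\delta t^{-1/2}}|\widehat{\bm{U}}(t,\bm{\xi})|^2\,d\bm{\xi}$ converges to a quadratic form $Q(\widehat{\bm{U}_0}(0))$ whose coefficients of $|\widehat{\mathcal{n}_0}(0)|^2$, $|\widehat{\bm{u}_0}(0)|^2$ and $|\widehat{\mathcal{m}_0}(0)|^2$ are strictly positive, the only cross term being the $\widehat{\mathcal{n}_0}(0)\,\overline{\widehat{\mathcal{m}_0}(0)}$ one (every $\widehat{\bm{u}_0}(0)$-cross term is oscillatory and hence annihilated). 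Provided $Q(\widehat{\bm{U}_0}(0))>0$ — which, after a Young inequality on that cross term, is guaranteed by a condition of the type $C'|\widehat{\mathcal{n}_0}(0)|^2-c'\bigl(|\widehat{\bm{u}_0}(0)|^2+|\widehat{\mathcal{m}_0}(0)|^2\bigr)>0$ for suitable positive $C',c'$ (these being the constants referred to in Theorem~\ref{updown0}, with $\widehat{\bm{U}_0}(0)=(2\pi)^{-3/2}\int_{\mathbb{R}^3}\bm{U}_0\,d\bm{x}$) — one gets $\|(\mathcal{n},\bm{u},\mathcal{m})(t)\|^2\ge\tfrac12 Q(\widehat{\bm{U}_0}(0))(1+t)^{-3/2}$ for $t\ge t_0$, and combining this with the continuity and non-vanishing of $t\mapsto\|(\mathcal{n},\bm{u},\mathcal{m})(t)\|$ on $[0,t_0]$ yields $\|(\mathcal{n},\bm{u},\mathcal{m})(t)\|\ge a_1(1+t)^{-3/4}$ for all $t\ge0$.

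The main obstacle is the conjugate pair of oscillatory acoustic modes $e^{\lambda_{2,3}t}\bm{P}_{2,3}$: because $\mathrm{Im}\,\lambda_{2,3}$ is of order $|\bm{\xi}|$, these decay in $L^2$ at exactly the same rate $t^{-3/4}$ as the purely dissipative mode $e^{\lambda_1 t}\bm{P}_1$, so they cannot be discarded as a lower-order remainder. One must retain them and argue, through the parabolic rescaling and the Riemann--Lebesgue lemma, that after squaring and integrating they contribute positively while all their cross terms with the dissipative modes average to zero; the last delicate point is to verify that the resulting limiting form $Q$ is genuinely bounded below, which is precisely where the non-degeneracy of the mean $\int_{\mathbb{R}^3}\bm{U}_0\,d\bm{x}$ — and hence the constants $C',c'$ quoted in Theorem~\ref{updown0} — enters.
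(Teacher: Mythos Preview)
Your argument is correct, but the route differs from the paper's in a way worth noting. Both of you invoke Proposition~\ref{th3.6} for the upper bound and then turn to Plancherel and the low-frequency spectral decomposition for the lower bound; the divergence is in how the acoustic oscillation is neutralised. The paper works on a \emph{fixed} ball $|\bm{\xi}|<\epsilon$, splits $|\widehat{\mathcal{n}}|^2$ by repeated use of $|a+b|^2\ge\tfrac12|a|^2-|b|^2$ so as to isolate the single term $|\widehat{G'_{11}}\widehat{\mathcal{n}_0}|^2$, rescales $\bm{\zeta}=t^{1/2}\bm{\xi}$ to obtain a growing ball $|\bm{\zeta}|<\epsilon\sqrt{t}$, and then bounds the resulting integral $F(t)=\int|e^{-\sigma_1|\bm{\zeta}|^2}+e^{-\sigma_2|\bm{\zeta}|^2}\cos(\sigma_3|\bm{\zeta}|\sqrt{t})|^2\,d\bm{\zeta}$ from below by the elementary device of restricting to the subregion where $\cos\ge0$ (so the integrand dominates $e^{-2\sigma_1|\bm{\zeta}|^2}$). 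No Riemann--Lebesgue is used; the oscillation is handled by brute-force pointwise positivity on a set of positive measure. Your approach instead shrinks the ball with $t$, freezes $\widehat{\bm{U}_0}$ at the origin, and lets Riemann--Lebesgue annihilate every oscillatory cross term, yielding an actual limiting quadratic form $Q(\widehat{\bm{U}_0}(0))$. This is sharper---you get the precise asymptotic constant and see exactly which cross terms survive---at the cost of invoking the averaging argument rather than a direct inequality. One small point: your non-degeneracy condition is naturally stated in terms of $\widehat{\bm{U}_0}(0)=(2\pi)^{-3/2}\!\int\bm{U}_0$, whereas the paper (and Theorem~\ref{updown0}) phrases it in terms of $L^1$ norms; these are not identical, and indeed the paper's lower-bound step $T_3\gtrsim\|\mathcal{n}_0\|_{L^1}^2\cdot(\cdots)$ tacitly requires $|\widehat{\mathcal{n}_0}|$ to be bounded below near the origin, so your formulation is arguably the more honest one.
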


\begin{proof}
Dute to Proposition \ref{th3.6}, we only to show the lower bound of time-decay rates for the solution under the assumptions of \ref{th1.1}. We prove $\|\mathcal{n}\|\geq a_1 (1+t)^{-\frac34}$ first, and the other two estimates are similar.  Firstly, by the Plancherel's theorem, we know
\begin{eqnarray*}
\|\mathcal{n}\|^2=\|\widehat{\mathcal{n}}\|^2&=&\left\|\widehat{G_{11}}\widehat{\mathcal{n}_0}+\left(\widehat{G_{12}},\widehat{G_{13}},\widehat{G_{14}}\right)\cdot\widehat{\bm{u}_{ 0}}+\widehat{G_{15}}\widehat{\mathcal{m}_0}\right\|^2\\[3mm]
&\geq&\int_{|\bm{\xi}|<\epsilon}\left|\widehat{G_{11}'}\widehat{\mathcal{n}_0}+\left(\widehat{G_{12}'},\widehat{G_{13}'},\widehat{G_{14}'}\right)\cdot\widehat{\bm{u}_{ 0}}+\widehat{G_{15}'}\widehat{\mathcal{m}_0}+\widehat{\bm{R}}\cdot\widehat{\bm{U}_0}\right|^2d\bm{\xi}\\[3mm]
&\geq &\frac12\int_{|\bm{\xi}|<\epsilon}\left|\widehat{G_{11}'}\widehat{\mathcal{n}_0}+\left(\widehat{G_{12}'},\widehat{G_{13}'},\widehat{G_{14}'}\right)\cdot\widehat{\bm{u}_{ 0}}+\widehat{G_{15}'}\widehat{\mathcal{m}_0}\right|^2d\bm{\xi}
-\int_{|\bm{\xi}|<\epsilon}\left|\widehat{\bm{R}}\cdot\widehat{\bm{U}_0}\right|^2d\bm{\xi}\\[3mm]
&:=&\frac{1}{2}T_1-T_2.
\end{eqnarray*}
Here,
\begin{equation*}
	\left(\widehat{G_{11}},\widehat{G_{12}},\widehat{G_{13}},\widehat{G_{14}},\widehat{G_{15}}\right)=\left(\widehat{G_{11}'},\widehat{G_{12}'},\widehat{G_{13}'},\widehat{G_{14}'},\widehat{G_{15}'}\right)+\widehat{\bm{R}},
\end{equation*}
and $\widehat{\bm{R}}$ is the remainder terms which are in the same order as $|\bm{\xi}|$ or higher.
From Proposition \ref{th3.6}, it holds that
\begin{eqnarray*}
T_2\leq C(1+t)^{-\frac52} \|\bm{U}_0\|_{L^1}^2.
\end{eqnarray*}
As for $T_1$, we have that
\begin{eqnarray*}
T_1&\geq&\frac12\left\|\widehat{G_{11}'}\widehat{\mathcal{n}_0}\right\|^2-2\left[\left\|\left(\widehat{G_{12}'},\widehat{G_{13}'},\widehat{G_{14}'}\right)\cdot\widehat{\bm{u}_{ 0}}\right\|^2+\left\|\widehat{G_{15}'}\widehat{\mathcal{m}_0}\right\|^2\right]\\[3mm]
&:=&T_3-T_4.
\end{eqnarray*}
It follows from \eqref{P23}, Lemma \ref{lem2} and Proposition \ref{th3.6} for some positive constants $C$ and $c_0'$ that,
\begin{eqnarray*}
T_4\leq C\int_{|\bm{\xi}|<\epsilon}|e^{-d_1|\bm{\xi}|^2t}|^2|\widehat{\bm{u}}_0|^2+|e^{-d_1|\bm{\xi}|^2t}|^2|\widehat{\mathcal{m}}_0|^2d\bm{\xi}\leq c_0'(1+t)^{-\frac32}\|(\bm{u}_0,\mathcal{m}_0)\|_{L^1}^2.	
\end{eqnarray*}

And the term $T_3$ satisfies for $t>1$ that
\begin{eqnarray*}
T_3&=&\frac{1}{2}\int_{|\bm{\xi}|<\epsilon}\left|e^{-\frac{\kappa+4}{R+C_v}|\bm{\xi}|^2t}+e^{-\left(\frac{2\mu+\mu'}{2}+\frac{(\kappa+4)R}{2C_v(C_v+R)}\right)|\bm{\xi}|^2t}cos\left(\sqrt{R+\frac{R^2}{c_v}}|\bm{\xi}|t\right)\right|^2|\widehat{\mathcal{n}}_0|^2d\bm{\xi}\\[3mm]	
&\gtrsim&\|\mathcal{n}_0\|_{L^1}^2\int_{|\bm{\zeta}|<\epsilon \sqrt{t}}\left|e^{-\frac{\kappa+4}{R+C_v}|\bm{\zeta}|^2}+e^{-\left(\frac{2\mu+\mu'}{2}+\frac{(\kappa+4)R}{2C_v(C_v+R)}\right)|\bm{\zeta}|^2}cos\left(\sqrt{R+\frac{R^2}{c_v}}|\bm{\zeta}|\sqrt{t}\right)\right|^2t^{-\frac{3}{2}}d\bm{\zeta}\\[3mm]
&\gtrsim&\|\mathcal{n}_0\|_{L^1}^2(1+t)^{-\frac32}\int_{|\bm{\zeta}|<\epsilon \sqrt{t} } \left|e^{-\frac{\kappa+4}{R+C_v}|\bm{\zeta}|^2}+e^{-\left(\frac{2\mu+\mu'}{2}+\frac{(\kappa+4)R}{2C_v(C_v+R)}\right)|\bm{\zeta}|^2}cos\left(\sqrt{R+\frac{R^2}{c_v}}|\bm{\zeta}|\sqrt{t}\right)\right|^2d\bm{\zeta}\\[3mm]
&:=&C\|\mathcal{n}_0\|_{L^1}^2(1+t)^{-\frac32}\int_{|\bm{\zeta}|<\epsilon \sqrt{t}}|f(|\bm{\zeta}|^2,t)|^2  d\bm{\zeta}	\\[3mm]
&:=&C\|\mathcal{n}_0\|_{L^1}^2(1+t)^{-\frac32}F(t),
\end{eqnarray*}
where one can verify that $f(y,t)$ is continuous with respect to $(y,t)$ and $F(t)$ is a non-negative, continuous  function.For simplicity, let$\frac{\kappa+4}{R+C_v}=\sigma_1$, $\frac{2\mu+\mu'}{2}+\frac{(\kappa+4)R}{2C_v(C_v+R)}=\sigma_2$ and $\sqrt{R+\frac{R^2}{C_v}}=\sigma_3$, then 
\begin{eqnarray*}	
	f({|\bm{\zeta}|^2},t)=e^{-\sigma_1|\bm{\zeta}|^2}+e^{-\sigma_2|\bm{\zeta}|^2}cos\left(\sigma_3|\bm{\zeta}|\sqrt{t}\right).	
\end{eqnarray*}

We claim that there exist $C(\epsilon,K)>0$ such that $F(t)\geq C(\epsilon,K)$ uniformly for $t>0$. Clearly, this is true, when $1<t\leq K$ ( $K$ is a sufficiently large positive constant such that $\epsilon\sqrt{K}-\frac{1}{2\sigma_1}>\frac{2\pi}{\sigma_3\sqrt{K}}$ and we define $M=\epsilon\sqrt{K}$ ). when $t>K$, we just need to show that there exists a nonzero measure set on which the function value of $f(|\bm{\zeta}|^2,t)| $ is not zero. In fact, we have that
\begin{eqnarray}\notag
F(t)&=&\int_{|\bm{\zeta}|\sqrt{t}<\epsilon t } \left|e^{-\sigma_1|\bm{\zeta}|^2}+e^{-\sigma_2|\bm{\zeta}|^2}cos\left(\sigma_3|\bm{\zeta}|\sqrt{t}\right)\right|^2d\bm{\zeta}\\[3mm]\notag
&\geq&\int_{\left\{\zeta\Big|\frac{\sigma_3\sqrt{t}}{2\sigma_1}<\sigma_3|\bm{\zeta}|\sqrt{t}<M\sigma_3\sqrt{t},\  cos\left(\sigma_3|\bm{\zeta}|\sqrt{t}\right)\geq0\right\}} e^{-2\sigma_1|\bm{\zeta}|^2}d\bm{\zeta}\\[3mm]\notag
&\geq&c\int_{\left\{\zeta\Big|\frac{\sigma_3\sqrt{t}}{2\sigma_1}<\sigma_3r\sqrt{t}<M\sigma_3\sqrt{t},\  cos\left(\sigma_3r\sqrt{t}\right)\geq0\right\}} r^2e^{-2\sigma_1r^2}dr\\[3mm]\label{Ft}
&\geq&\frac{M-\frac{1}{2\sigma_1}}{4}M^2e^{-2\sigma_1M^2}=:C(\epsilon,K).
\end{eqnarray}


Thus, we derive that
\begin{eqnarray*}
T_3\geq C'\|\mathcal{n}_0\|_{L^1}^2(1+t)^{-\frac32},
\end{eqnarray*}
where $C'$ depends on $C(\epsilon, K)$ and is independent of $t$.
Then we have that
\begin{align*}
   T_1\geq (1+t)^{-\frac32}\left(C'\|\mathcal{n}_0\|_{L^1}^2-c_0'\|(\bm{u}_0,\mathcal{m}_0)\|_{L^1}^2\right),
\end{align*}
and here we let $c_0=\frac{c_0'}{C'}$. 
From the estimates above, we conclude it for $t> 1$ that
\begin{eqnarray*}
	\|\mathcal{n}\|=\|\widehat{\mathcal{n}}\|\geq a_1(1+t)^{-\frac34}.
\end{eqnarray*}
Similar to $\mathcal{n}$, we can conclude 
\begin{eqnarray*}
	\|\mathcal{m}(t)\|=\|\widehat{\mathcal{m}}(t)\|\geq a_1(1+t)^{-\frac34}.
\end{eqnarray*}	
As for $\bm{u}$, similar to \eqref{Ft}, we need to prove there exists a positive constant $C$ such that 
\begin{equation*}
	\int_{|\bm{\zeta}|\sqrt{t}<\epsilon t } \left|e^{-\sigma_2|\bm{\zeta}|^2}sin\left(\sigma_3|\bm{\zeta}|\sqrt{t}\right)\right|^2d\bm{\zeta}>C.
\end{equation*}
we omit the details here. Since the upper bound is given by Proposition \ref{th3.6}, the proof is completed.
\end{proof}

\section{Decay rate of the nonlinear system}
\begin{lemma}\label{nablak}
If there exists a small enough constant $\epsilon_2>0$ such that the initial data satisfies  
\begin{equation*}
\|(\rho_0-1,\bm{u}_0,\theta_0-1)\|_{L^1\cap H^{k+2}}\leq\epsilon_2,\qquad(\ \text{for any integer}\ k\geq0) 
\end{equation*}
 then there exists a positive constant $C_k$ such that
	\begin{align}\label{k}
	&\|\nabla^i(\mathcal{n},\bm{u},\mathcal{m})(t)\|_2\leq C(1+t)^{-\frac{3}{4}-\frac i2}\|(\mathcal{n}_0,\bm{u}_0,\mathcal{m}_0)\|_{L^1\cap H^{i+2}},\quad(0\leq i\leq k)\\\label{q}
	&\|\nabla^j\bm{q}\|\leq C\|\nabla^{j+1}\mathcal{m}\|\quad(0\leq j\leq k+1),\qquad\text{for any $t\geq0$}.
	\end{align}
\begin{proof}
We define the energy functional
\begin{equation*}
F_k(t)=\|\nabla^k(\mathcal{n},\bm{u},\mathcal{m})(t)\|^2_2
+\delta\sum_{k\leq|\alpha|\leq k+1}\left\langle\partial_{\bm{x}}^{\alpha}\bm{u},\nabla\partial_{\bm{x}}^{\alpha}\mathcal{n}\right\rangle(t).
\end{equation*}
Here $\delta>0$ is a fix small constant which can be decided in the same way as that in Lemma \ref{basiclemma}, \ref{kno0} and \ref{kis0}. It can be noticed that $F_k(t)$ is equivalent to $\|\nabla^k(\mathcal{n},\bm{u},\mathcal{m})(t)\|^2_2$, that is there exists a constant $C>1$, such that 
\begin{equation*}
\frac{1}{C}\|\nabla^k(\mathcal{n},\bm{u},\mathcal{m})(t)\|^2_2\leq F_k(t)\leq C\|\nabla^k(\mathcal{n},\bm{u},\mathcal{m})(t)\|^2_2.
\end{equation*}
We define
\begin{equation*}
G_k(t)=\sup_{0\leq\tau\leq t}\left\{\sum_{i=0}^{k}(1+\tau)^{\frac{3}{2}+i}F_i(\tau)\right\}.
\end{equation*}
Now we use the inductive method to prove that
\begin{align}\label{Gk}
	G_k(t)\lesssim \|(\mathcal{n}_0,\bm{u}_0,\mathcal{m}_0)\|^2_{L^1\cap H^{k+2}}+G_k^2(t).
\end{align}
First, we show that \eqref{Gk} holds for $k=0$. From Duhamel's principle, it holds that
\begin{equation}\label{Duhamel}
(\mathcal{n},\bm{u},\mathcal{m})(t,\bm{x})=e^{t\bm{B}}(\mathcal{n}_0,\bm{u}_0,\mathcal{m}_0)(\bm{x})+\int_{0}^{t}e^{(t-\tau)\bm{B}}\left({\rm R_1},{\rm R_2},{\rm R_3}\right)(\tau){\rm d}\tau,
\end{equation}
where $e^{t\bm{B}}$ is the linearized solution operator. By applying the linear estimate on $(\mathcal{n},\bm{u},\mathcal{m})(t,\bm{x})$ in Proposition \ref{th3.6}, we have
\begin{align}{\label{0jie}}
\|(\mathcal{n},\bm{u},\mathcal{m})(t)\|\leq C\|\bm{U}_0\|_{L^1\cap L^2}(1+t)^{-\frac 34}
+C  \int_{0}^{t}(1+t-\tau)^{-\frac 34}\|({\rm R_1},{\rm R_2},{\rm R_3})(\tau)\|_{L^1\cap L^2} d\tau
\end{align}
The nonlinear source terms can be estimated as follows.
\begin{align}\label{11}
&\|({\rm R_1},{\rm R_2},{\rm R_3})(t)\|_{L^1}
\lesssim\|(\mathcal{n},\bm{u},\mathcal{m})(t)\|_1\|(\mathcal{n},\bm{u},\mathcal{m})(t)\|_2
\lesssim(1+t)^{-\frac 32}G_0(t),\\\label{12}
&\|({\rm R_1},{\rm R_2},{\rm R_3})(t)\|\lesssim\|(\mathcal{n},\bm{u},\mathcal{m})(t)\|^2_2\lesssim(1+t)^{-\frac 32}G_0(t).
\end{align}
Here, the nonlinear term $\left(1-\Delta\right)^{-1}\Delta\left(\mathcal{m}^4+4\mathcal{m}^3+6\mathcal{m}^2\right)$ can be estimated by using Hausdorff-Young inequality, such as
\begin{align*}
&\left\|\left(1-\Delta\right)^{-1}\Delta\left(\mathcal{m}^4+4\mathcal{m}^3+6\mathcal{m}^2\right)(t)\right\|^2
\lesssim\left\|\frac{|\bm{\xi}|^2}{|\bm{\xi}|^2+1}\left(\widehat{\mathcal{m}^4}+4\widehat{\mathcal{m}^3}+6\widehat{\mathcal{m}^2}\right)(t)\right\|^2\\
&\qquad\quad\ \ \lesssim\left\||\bm{\xi}|^2\left(\widehat{\mathcal{m}^4}+4\widehat{\mathcal{m}^3}+6\widehat{\mathcal{m}^2}\right)(t)\right\|^2
\lesssim\left\|\nabla^2\left(\mathcal{m}^4+4\mathcal{m}^3+6\mathcal{m}^2\right)(t)\right\|^2
\lesssim\|\mathcal{m}(t)\|_2^2.
\end{align*}
Substitute \eqref{11} and \eqref{12} into \eqref{0jie} to get
\begin{align}\notag
\|(\mathcal{n},\bm{u},\mathcal{m})(t)\|
&\leq C\|\bm{U}_0\|_{L^1\cap L^2}(1+t)^{-\frac 34}
+CG_0(t)\int_{0}^{t}(1+t-\tau)^{-\frac 34}(1+\tau)^{-\frac{3}{2}}d\tau\\ \label{num0}
&\lesssim (1+t)^{-\frac 34}\left(\|\bm{U}_0\|_{L^1\cap L^2}+G_0(t)\right),
\end{align}
where we have used that
\begin{align*}
&\int_{0}^{t}(1+t-\tau)^{-\frac 34}(1+\tau)^{-\frac{3}{2}}d\tau\\
=&\int_{0}^{\frac t2}(1+t-\tau)^{-\frac 34}(1+\tau)^{-\frac{3}{2}}d\tau
+\int_{\frac t2}^{t}(1+t-\tau)^{-\frac 34}(1+\tau)^{-\frac{3}{2}}d\tau\\
\leq&(1+\frac t2)^{-\frac 34}\int_{0}^{\infty}(1+\tau)^{-\frac{3}{2}}d\tau
+(1+\frac t2)^{-\frac{3}{2}}\int_{\frac t2}^{t}(1+t-\tau)^{-\frac 34}d\tau\\[3mm]
\lesssim&(1+t)^{-\frac{3}{4}}+(1+t)^{-\frac{5}{4}}-(1+t)^{-\frac{3}{2}}\lesssim(1+t)^{-\frac{3}{4}}.
\end{align*}
From \eqref{basic}, \eqref{apriori1} and \eqref{apriori2}, we have 
\begin{equation*}
\frac{\rm d}{{\rm d}t}\left(\|(\mathcal{n},\bm{u},\mathcal{m})\|^2_2
+\delta\sum_{|\alpha|\leq1}\left\langle\partial_{\bm{x}}^{\alpha}\bm{u},\nabla\partial_{\bm{x}}^{\alpha}\mathcal{n}\right\rangle\right)
+\|\nabla(\mathcal{n},\bm{u},\mathcal{m})\|^2_1
\leq 0,
\end{equation*}
which leads to 
\begin{equation}\label{F1ieq}
\frac{\rm d}{{\rm d}t}\left(\|(\mathcal{n},\bm{u},\mathcal{m})\|^2_2
+\delta\sum_{|\alpha|\leq1}\left\langle\partial_{\bm{x}}^{\alpha}\bm{u},\nabla\partial_{\bm{x}}^{\alpha}\mathcal{n}\right\rangle\right)
+\|(\mathcal{n},\bm{u},\mathcal{m})\|^2_2
\leq\|(\mathcal{n},\bm{u},\mathcal{m})\|^2.
\end{equation}
Then \eqref{F1ieq} can be rewritten as
\begin{equation}\label{dF1}
\frac{\rm d}{{\rm d}t}F_0(t)
+F_0(t)
\leq\|(\mathcal{n},\bm{u},\mathcal{m})\|^2.
\end{equation}
Hence, by using the Gronwall's inequality, \eqref{dF1} and \eqref{num0}, we have that
\begin{align*}
F_0(t)
&\leq F_0(0)e^{-t}+C\int_{0}^{t}e^{-(t-\tau)}\|(\mathcal{n},\bm{u},\mathcal{m})(\tau)\|^2\ d\tau\\
&\lesssim F_0(0)e^{-t}+\int_{0}^{t}e^{-(t-\tau)}(1+\tau)^{-\frac 32}\ d\tau\left(\|(\mathcal{n}_0,\bm{u}_0,\mathcal{m}_0)\|^2_{L^1\cap L^2}+G_0^2(\tau)\right) \\[2mm]
&\lesssim (1+t)^{-\frac 32}\left(F_0(0)+\|(\mathcal{n}_0,\bm{u}_0,\mathcal{m}_0)\|^2_{L^1\cap L^2}+G_0^2(t)\right)\\[3mm]
&\lesssim(1+t)^{-\frac 32}\left(\|(\mathcal{n}_0,\bm{u}_0,\mathcal{m}_0)\|^2_{L^1\cap H^2}+G_0^2(t)\right),
\end{align*}
which indicates that
\begin{align*}\label{G1}
G_0(t)\lesssim \|(\mathcal{n}_0,\bm{u}_0,\mathcal{m}_0)\|^2_{L^1\cap H^2}+G_0^2(t).
\end{align*}
Now assume \eqref{Gk} holds for some positive integer $i\ (0\leq i\leq k-1)$ and from Strauss Lemma \ref{Strauss}, we know that $G_i(t)$ is bounded. Now it is left to prove \eqref{Gk} also holds for $i+1$. Here we let $i\geq3$, since when $1\leq i\leq2$, \eqref{Gk} can be proved by the same procedure below. First, we estimate the nonlinear terms as follows.
\begin{align*}
&\|({\rm R_1},{\rm R_2},{\rm R_3})(t)\|_{L^1}
\lesssim\|(\mathcal{n},\bm{u},\mathcal{m})(t)\|_1\|\nabla(\mathcal{n},\bm{u},\mathcal{m})(t)\|_1
\lesssim(1+t)^{-\frac 34-\frac 54}G_{i+1}(t),\\
&\|\nabla^{i+1}({\rm R_1},{\rm R_2},{\rm R_3})(t)\|\lesssim(1+t)^{-\frac 34-\frac 12-\frac 34-\frac {i+1}2}G_{i+1}(t)\lesssim(1+t)^{-\frac 34-\frac{i+1}2}G_{i+1}(t),\\
&\|\nabla^{i-2}({\rm R_1},{\rm R_2},{\rm R_3})(t)\|\lesssim(1+t)^{-\frac 34-\frac 12-\frac 34-\frac{i-1}2}G_{i+1}(t)\lesssim(1+t)^{-\frac 34-\frac{i+1}2}G_{i+1}(t).
\end{align*}
From Proposition \ref{th3.6} and  Duhamel's principle, we have that
\begin{align}\notag
\|\nabla^{i+1}&(\mathcal{n},\bm{u},\mathcal{m})(t)\|
\leq\ C\left(\|(\mathcal{n}_0,\bm{u}_0,\mathcal{m}_0)\|_{L^1}+\|\nabla^{i+1}(\mathcal{n}_0,\bm{u}_0,\mathcal{m}_0)\|\right)(1+t)^{-\frac 34-\frac {i+1}2}\\[1.5mm]\notag
+&C  \int_{0}^{\frac t2}(1+t-\tau)^{-\frac 34-\frac {i+1}2}\left(\|({\rm R_1},{\rm R_2},{\rm R_3})(\tau)\|_{L^1}+\|\nabla^{i+1}({\rm R_1},{\rm R_2},{\rm R_3})(\tau)\|\right) d\tau\\\notag
+&C  \int_{\frac t2}^{t}(1+t-\tau)^{-\frac 32}\left(\|\nabla^{i-2}({\rm R_1},{\rm R_2},{\rm R_3})(\tau)\|+\|\nabla^{i+1}({\rm R_1},{\rm R_2},{\rm R_3})(\tau)\|\right) d\tau\\\notag
\lesssim\ &(1+t)^{-\frac 34-\frac {i+1}2}\left(\|(\mathcal{n}_0,\bm{u}_0,\mathcal{m}_0)\|_{L^1}+\|\nabla^{i+1}(\mathcal{n}_0,\bm{u}_0,\mathcal{m}_0)\|\right)\\\notag
+&\int_{0}^{\frac t2}(1+t-\tau)^{-\frac 34-\frac {i+1}2}(1+\tau)^{-\frac 34-\frac 54}G_{i+1}(\tau) d\tau
+\int_{\frac t2}^{t}(1+t-\tau)^{-\frac 32}(1+\tau)^{-\frac 34-\frac{i+1}2}G_{i+1}(\tau) d\tau\\
\lesssim\ &(1+t)^{-\frac 34-\frac {i+1}2}\left(\|(\mathcal{n}_0,\bm{u}_0,\mathcal{m}_0)\|_{L^1}+\|\nabla^{i+1}(\mathcal{n}_0,\bm{u}_0,\mathcal{m}_0)\|+G_{i+1}(t)\right).
\end{align}
(When $1\leq i\leq2$, we don't need to divide the nonlinear estimate into two parts.) With the methods that are used in Lemma \ref{kno0} and \ref{kis0}, we can get it for some positive constant $C$ that
\begin{multline*}
\frac{\rm d}{{\rm d}t}\left(\|\nabla^{i+1}(\mathcal{n},\bm{u},\mathcal{m})\|^2_2
+\delta\sum_{i+1\leq|\alpha|\leq i+2}\left\langle\partial_{\bm{x}}^{\alpha}\bm{u},\nabla\partial_{\bm{x}}^{\alpha}\mathcal{n}\right\rangle\right)
+\|\nabla^{i+2}(\mathcal{n},\bm{u},\mathcal{m})\|^2_1\\[3mm]
\leq\|\nabla^{i+1}({\rm R_1},{\rm R_2},{\rm R_3})(t)\|\|\nabla^{i+1}(\mathcal{n},\bm{u},\mathcal{m})(t)\|,
\end{multline*} 
which leads to 
\begin{multline*}
	\frac{\rm d}{{\rm d}t}\left(\|\nabla^{i+1}(\mathcal{n},\bm{u},\mathcal{m})\|^2_2
	+\delta\sum_{i+1\leq|\alpha|\leq i+2}\left\langle\partial_{\bm{x}}^{\alpha}\bm{u},\nabla\partial_{\bm{x}}^{\alpha}\mathcal{n}\right\rangle\right)
	+\|\nabla^{i+1}(\mathcal{n},\bm{u},\mathcal{m})\|^2_2\\[3mm]
	\leq\|\nabla^{i+1}({\rm R_1},{\rm R_2},{\rm R_3})(t)\|\|\nabla^{i+1}(\mathcal{n},\bm{u},\mathcal{m})(t)\|+\|\nabla^{i+1}(\mathcal{n},\bm{u},\mathcal{m})(t)\|^2.
\end{multline*}	
And that is
\begin{equation*}
\frac{\rm d}{{\rm d}t}F_{i+1}(t)
+F_{i+1}(t)
\lesssim(1+t)^{-\frac 32-(i+1)}\left(\|(\mathcal{n}_0,\bm{u}_0,\mathcal{m}_0)\|^2_{L^1}+\|\nabla^{i+1}(\mathcal{n}_0,\bm{u}_0,\mathcal{m}_0)\|^2+G_{i+1}^2(t)\right).
\end{equation*}
Hence, by using the Gronwall's inequality, we have that
\begin{align*}
F_{i+1}(t)
&\lesssim F_{i+1}(0)e^{-t}+\int_{0}^{t}e^{-(t-\tau)}(1+\tau)^{-\frac 32-(i+1)}\left(\|\bm{U}_0\|^2_{L^1}+\|\nabla^{i+1}\bm{U}_0\|^2+G_{i+1}^2(\tau)\right)d\tau\\
&\lesssim (1+t)^{-\frac 32-(i+1)}\left(F_{i+1}(0)+\|\bm{U}_0\|^2_{L^1}+\|\nabla^{i+1}\bm{U}_0\|^2+G_{i+1}^2(t)\right),
\end{align*}
which indicates that
\begin{align}\label{Gi+1}
(1+t)^{\frac 32+(i+1)}F_{i+1}(t)
\lesssim\|\bm{U}_0\|^2_{L^1}+\|\nabla^{i+1}\bm{U}_0\|^2_2+G_{i+1}^2(t).
\end{align}
Now add \eqref{Gk} to \eqref{Gi+1}, we have
\begin{align*}\label{G}
G_{i+1}(t)
\lesssim\|\bm{U}_0\|^2_{L^1\cap H^{i+3}}+G_{i+1}^2(t),
\end{align*}
which completes the induction. With Strauss Lemma \ref{Strauss}, we can see that $G_k(t)$ is bounded, which leads to \eqref{k}. Apply $\partial_{\bm{x}}^{\alpha}\ (0\leq|\alpha|\leq k+1)$ to $\eqref{cns2}_4$, multiply it by $\partial_{\bm{x}}^{\alpha}\bm{q}$ and integrate the resultant equation over $\mathbb{R}^3$ to get
\begin{align*}
	\|\partial_{\bm{x}}^{\alpha}{\rm div}\bm{q}\|^2+\|\partial_{\bm{x}}^{\alpha}\bm{q}\|^2
	\leq 4\|\partial_{\bm{x}}^{\alpha}\nabla\mathcal{m}\|^2+\|\partial_{\bm{x}}^{\alpha}\nabla\left(\mathcal{m}^4\right)\|^2+4\|\partial_{\bm{x}}^{\alpha}\nabla\left(\mathcal{m}^3\right)\|^2+6\|\partial_{\bm{x}}^{\alpha}\nabla\left(\mathcal{m}^2\right)\|^2.
\end{align*}
We can see the fact from \eqref{k} that the nonlinear terms decay faster than the linear term $4\|\partial_{\bm{x}}^{\alpha}\nabla\mathcal{m}\|^2$, which gives \eqref{q}.
\end{proof}
\end{lemma}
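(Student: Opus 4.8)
The plan is a carefully organized bootstrap: combine the linear decay estimates of Proposition~\ref{th3.6} (through Duhamel's formula) with the weighted energy estimates \eqref{basic}, \eqref{apriori1} and \eqref{apriori2}, and run the argument by induction on the order of the derivatives. For each integer $i\geq0$ I would introduce
\begin{equation*}
F_i(t)=\|\nabla^i(\mathcal{n},\bm{u},\mathcal{m})(t)\|_2^2+\delta\sum_{i\leq|\alpha|\leq i+1}\left\langle\partial_{\bm{x}}^{\alpha}\bm{u},\nabla\partial_{\bm{x}}^{\alpha}\mathcal{n}\right\rangle(t),
\end{equation*}
which for small $\delta$ is equivalent to $\|\nabla^i(\mathcal{n},\bm{u},\mathcal{m})(t)\|_2^2$, and the accumulated weighted quantity
\begin{equation*}
G_k(t)=\sup_{0\leq\tau\leq t}\sum_{i=0}^{k}(1+\tau)^{\frac32+i}F_i(\tau).
\end{equation*}
The aim is to prove by induction that $G_k(t)\lesssim\|(\mathcal{n}_0,\bm{u}_0,\mathcal{m}_0)\|_{L^1\cap H^{k+2}}^2+G_k^2(t)$; since the data is small, Strauss' Lemma~\ref{Strauss} (with $\gamma=2$) upgrades this to a uniform bound for $G_k(t)$, and \eqref{k} then follows on taking square roots in $\|\nabla^i(\mathcal{n},\bm{u},\mathcal{m})(t)\|_2^2\lesssim F_i(t)$.

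\emph{Base case $k=0$.} Write $(\mathcal{n},\bm{u},\mathcal{m})(t)=e^{t\bm{B}}(\mathcal{n}_0,\bm{u}_0,\mathcal{m}_0)+\int_0^t e^{(t-\tau)\bm{B}}({\rm R_1},{\rm R_2},{\rm R_3})(\tau)\,\mathrm{d}\tau$ and apply the $k=0$ case of Proposition~\ref{th3.6}. Since ${\rm R_1},{\rm R_2},{\rm R_3}$ are at least quadratic in $(\mathcal{n},\bm{u},\mathcal{m})$ and its first and second derivatives, Lemma~\ref{inequality} gives $\|({\rm R_1},{\rm R_2},{\rm R_3})(t)\|_{L^1\cap L^2}\lesssim\|(\mathcal{n},\bm{u},\mathcal{m})(t)\|_1\|(\mathcal{n},\bm{u},\mathcal{m})(t)\|_2\lesssim(1+t)^{-\frac32}G_0(t)$, the nonlocal term $(1-\Delta)^{-1}\Delta(\mathcal{m}^4+4\mathcal{m}^3+6\mathcal{m}^2)$ being handled in $L^2$ by Plancherel using boundedness of $(1-\Delta)^{-1}\Delta$ on $L^2$. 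The convolution $\int_0^t(1+t-\tau)^{-\frac34}(1+\tau)^{-\frac32}\,\mathrm{d}\tau\lesssim(1+t)^{-\frac34}$ then yields $\|(\mathcal{n},\bm{u},\mathcal{m})(t)\|\lesssim(1+t)^{-\frac34}(\|\bm{U}_0\|_{L^1\cap L^2}+G_0(t))$. On the energy side, adding \eqref{basic} to \eqref{apriori1} (respectively \eqref{apriori2}) and absorbing the small terms produces $\frac{\mathrm{d}}{\mathrm{d}t}F_0(t)+F_0(t)\leq C\|(\mathcal{n},\bm{u},\mathcal{m})(t)\|^2$; Gronwall's inequality together with the bound just obtained and $\int_0^t e^{-(t-\tau)}(1+\tau)^{-\frac32}\,\mathrm{d}\tau\lesssim(1+t)^{-\frac32}$ gives $F_0(t)\lesssim(1+t)^{-\frac32}(\|\bm{U}_0\|_{L^1\cap H^2}^2+G_0^2(t))$, which is the desired inequality for $k=0$.

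\emph{Inductive step.} Assume $G_m(t)$ is bounded; I estimate $F_{m+1}$. Apply Duhamel's formula and use \emph{both} estimates of Proposition~\ref{th3.6}, splitting the time integral at $t/2$ (for $m+1\geq3$; the first steps $m+1=1,2$ need no splitting): on $[0,t/2]$ use the $L^1$-rate $(1+t-\tau)^{-\frac34-\frac{m+1}{2}}$ against $\|({\rm R_1},{\rm R_2},{\rm R_3})\|_{L^1}+\|\nabla^{m+1}({\rm R_1},{\rm R_2},{\rm R_3})\|$, and on $[t/2,t]$ use the improved rate $(1+t-\tau)^{-\frac32}$ from the $\nabla^{m-2}\!\rightarrow\!\nabla^{m+1}$ estimate against $\|\nabla^{m-2}({\rm R_1},{\rm R_2},{\rm R_3})\|+\|\nabla^{m+1}({\rm R_1},{\rm R_2},{\rm R_3})\|$. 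Distributing derivatives in the quadratic terms, placing the factor with fewer derivatives in $L^\infty$ via Lemma~\ref{inequality}, and feeding in the already-established decay of $F_0,\dots,F_m$ for that factor, one checks that each of these nonlinear norms is $\lesssim(1+t)^{-\frac34-\frac{m+1}{2}-s}G_{m+1}(t)$ with $s>0$ large enough that every $\tau$-convolution returns the rate $(1+t)^{-\frac34-\frac{m+1}{2}}$; hence $\|\nabla^{m+1}(\mathcal{n},\bm{u},\mathcal{m})(t)\|\lesssim(1+t)^{-\frac34-\frac{m+1}{2}}(\|\bm{U}_0\|_{L^1}+\|\nabla^{m+1}\bm{U}_0\|+G_{m+1}(t))$. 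The level-$(m+1)$ analogue of Lemmas~\ref{kno0} and \ref{kis0}, obtained by applying $\partial_{\bm{x}}^{\alpha}$ with $m+1\leq|\alpha|\leq m+2$ (this is why $H^{m+3}$ of the data enters), gives $\frac{\mathrm{d}}{\mathrm{d}t}F_{m+1}+F_{m+1}\lesssim\|\nabla^{m+1}({\rm R_1},{\rm R_2},{\rm R_3})\|\,\|\nabla^{m+1}(\mathcal{n},\bm{u},\mathcal{m})\|+\|\nabla^{m+1}(\mathcal{n},\bm{u},\mathcal{m})\|^2$; inserting the previous bound and applying Gronwall yields $(1+t)^{\frac32+(m+1)}F_{m+1}(t)\lesssim\|\bm{U}_0\|_{L^1\cap H^{m+3}}^2+G_{m+1}^2(t)$, which added to the inductive hypothesis closes the induction. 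Once $G_k(t)$ is bounded, \eqref{k} follows; and \eqref{q} is obtained by testing $\partial_{\bm{x}}^{\alpha}$ of $\eqref{cns2}_4$ against $\partial_{\bm{x}}^{\alpha}\bm{q}$ for $0\leq|\alpha|\leq k+1$, which gives $\|\partial_{\bm{x}}^{\alpha}\bm{q}\|^2+\|\partial_{\bm{x}}^{\alpha}{\rm div}\bm{q}\|^2\lesssim\|\nabla\partial_{\bm{x}}^{\alpha}\mathcal{m}\|^2+\|\nabla\partial_{\bm{x}}^{\alpha}(\mathcal{m}^2+\mathcal{m}^3+\mathcal{m}^4)\|^2$, where by \eqref{k} the nonlinear terms decay strictly faster than $\|\nabla^{j+1}\mathcal{m}\|$.

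\emph{Main obstacle.} The crux is the inductive step: at derivative level $m+1$ the source terms ${\rm R_2},{\rm R_3}$ still contain second-order operators, so $\nabla^{m+1}({\rm R_1},{\rm R_2},{\rm R_3})$ involves derivatives of the solution up to order $m+3$ — two levels above any level that decays, which naively breaks the induction. The resolution, as stressed in the introduction, is to carry the whole $H^2$-norm $\|\nabla^i(\mathcal{n},\bm{u},\mathcal{m})\|_2$ as one unit $F_i$ (so the two non-decaying top levels are harmlessly absorbed), to recover the dissipation of $\mathcal{n}$ through the cross term $\delta\langle\partial_{\bm{x}}^{\alpha}\bm{u},\nabla\partial_{\bm{x}}^{\alpha}\mathcal{n}\rangle$, and to use the $\nabla^{m-2}\!\rightarrow\!\nabla^{m+1}$ linear estimate on $[t/2,t]$ to convert the slow decay of the top levels into the uniform rate $(1+t)^{-3/2}$. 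The remaining effort is bookkeeping: verifying that every nonlinear convolution closes with exactly the exponent $\tfrac34+\tfrac{m+1}{2}$, losing no power and no logarithm, so that the induction propagates cleanly from $k$ to $k+1$.
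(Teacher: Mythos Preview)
Your proposal is correct and follows essentially the same approach as the paper: the same energy functionals $F_i$ and weighted quantity $G_k$, the same inductive scheme closed via Strauss' Lemma, the same Duhamel argument with the $t/2$ splitting using both linear estimates of Proposition~\ref{th3.6}, and the same treatment of $\bm{q}$ via testing $\eqref{cns2}_4$. Your identification of the ``main obstacle'' (that $\nabla^{m+1}({\rm R_1},{\rm R_2},{\rm R_3})$ reaches two derivative levels above) and its resolution through carrying the full $H^2$-block $\|\nabla^i(\cdot)\|_2$ as one unit is exactly the paper's point.
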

\begin{lemma}\label{lemma4.2}
	If there exists a small enough constant $\epsilon_3>0$ such that the initial data satisfies  
	\begin{align*}
	&\|(\mathcal{n}_0,\bm{u}_0,\mathcal{m}_0)\|_{L^1\cap H^{k}}\leq\epsilon_3\quad(\ \text{for any integer}\ k\geq2),\\[1.5mm]
    &C'\|\mathcal{n}_0\|_{L^1}^2-c_0'\|(\bm{u}_0,\mathcal{m}_0)\|_{L^1}^2>C\epsilon_3^2,
	\end{align*}
	where constants $C'$ and $c'$ are determined in Proposition \ref{Pro9}.
	Then, there exist two positive constants $a'_1,a'_2$ such that
	\begin{equation*}
	a'_1(1+t)^{-\frac{3}{4}}\leq \|(\mathcal{n},\bm{u},\mathcal{m},{\rm div}\bm{q})(t)\|\leq a'_2(1+t)^{-\frac{3}{4}},\qquad t\geq0.
	\end{equation*}
\begin{proof}
	
   From Duhamel's principle, it holds that
	\begin{eqnarray*}
		\|(\mathcal{n},\bm{u},\mathcal{m})(t,\bm{x})-e^{t\bm{B}}(\mathcal{n}_0,\bm{u}_0,\mathcal{m}_0)(\bm{x})\|
		&&\leq \int_{0}^{t}\|e^{(t-\tau)\bm{B}}({\rm R_1},{\rm R_2},{\rm R_3})(\tau)\|{\rm d}\tau\\
		&&\lesssim\int_{0}^{t}(1+t-\tau)^{-\frac{3}{4}}\|({\rm R_1},{\rm R_2},{\rm R_3})(\tau)\|_{L^1\cap L^2}\ d\tau
	\end{eqnarray*}
Now with Lemma \eqref{nablak}, the nonlinear source terms can be also estimated as
\begin{align*}
&\|({\rm R_1},{\rm R_2},{\rm R_3})(t)\|_{L^1}
\lesssim\|(\mathcal{n},\bm{u},\mathcal{m})(t)\|_2^2
\lesssim (1+t)^{-\frac 32}\|(\mathcal{n}_0,\bm{u}_0,\mathcal{m}_0)\|^2_{L^1\cap H^{2}},\\
&\|({\rm R_1},{\rm R_2},{\rm R_3})(t)\|\lesssim\|(\mathcal{n},\bm{u},\mathcal{m})(t)\|_2^2\lesssim (1+t)^{-\frac 32}\|(\mathcal{n}_0,\bm{u}_0,\mathcal{m}_0)\|^2_{L^1\cap H^{2}}.
\end{align*}
So we have
\begin{align*}
	\|(\mathcal{n},\bm{u},\mathcal{m})(t,\bm{x})-e^{t\bm{B}}(\mathcal{n}_0,\bm{u}_0,\mathcal{m}_0)(\bm{x})\|
	\leq C\epsilon_3^2\int_{0}^{t}(1+t-\tau)^{-\frac{3}{4}}(1+t)^{-\frac 32}\ d\tau
\end{align*}	
thus, from Proposition \ref{Pro9}, we know
\begin{eqnarray}\label{updown}	
	a_1(1+t)^{-\frac34}-C\epsilon_3^2(1+t)^{-\frac 34}	\leq\|(\mathcal{n},\bm{u},\mathcal{m})(t)\|\leq a_2(1+t)^{-\frac34}+C\epsilon_3^2(1+t)^{-\frac 34}.
\end{eqnarray}	
 As for ${\rm div}\bm{q}$, we can see it from \eqref{divq} that
\begin{align*}
    \|{\rm div}\bm{q}(t)\|^2
    -4\left\|\mathcal{m}(t)\right\|^2
	&\lesssim\|{\rm div}\bm{q}(t)\|^2
	-4\left\|\frac{|\bm{\xi}|^2}{|\bm{\xi}|^2+1}\widehat{\mathcal{m}}(t)\right\|^2
	\lesssim\left\|\frac{|\bm{\xi}|^2}{|\bm{\xi}|^2+1}\left(\widehat{\mathcal{m}^4}+4\widehat{\mathcal{m}^3}+6\widehat{\mathcal{m}^2}\right)(t)\right\|^2\\[2mm]
	&\lesssim\left\|\left(\mathcal{m}^4+4\mathcal{m}^3+6\mathcal{m}^2\right)(t)\right\|^2\lesssim(1+t)^{-3}\|(\mathcal{n}_0,\bm{u}_0,\mathcal{m}_0)\|_{L^1\cap H^{2}},
\end{align*}
which combined with \eqref{updown} provides the estimate for ${\rm div}\bm{q}$. Let $a'_1=\frac{a_1-C\epsilon_3^2}{2}$ and $a'_2=2\left(a_2+C\epsilon_3^2\right)$.
Now the proof is completed.
\end{proof}
\end{lemma}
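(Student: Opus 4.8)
The plan is to regard the nonlinear solution as a small perturbation of the linear one and to combine the sharp two‑sided linear decay of Proposition~\ref{Pro9} with a Duhamel estimate for the nonlinear contribution. First I would write, by Duhamel's principle, $(\mathcal{n},\bm{u},\mathcal{m})(t)=e^{t\bm{B}}(\mathcal{n}_0,\bm{u}_0,\mathcal{m}_0)+\int_0^t e^{(t-\tau)\bm{B}}({\rm R_1},{\rm R_2},{\rm R_3})(\tau)\,{\rm d}\tau$ and control the forcing. Since $k\ge2$ the data lies in $L^1\cap H^2$, so Lemma~\ref{nablak} applied with $k=0$ already yields $\|(\mathcal{n},\bm{u},\mathcal{m})(t)\|_2\lesssim(1+t)^{-3/4}\epsilon_3$. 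The terms ${\rm R_1},{\rm R_2},{\rm R_3}$ are quadratic in the unknown and its first two derivatives --- together with the extra elliptic term $(1-\Delta)^{-1}\Delta(\mathcal{m}^4+4\mathcal{m}^3+6\mathcal{m}^2)$ in ${\rm R_3}$, which is bounded by $\|\mathcal{m}\|_2$ via the Hausdorff--Young argument used in Lemma~\ref{nablak} --- so $\|({\rm R_1},{\rm R_2},{\rm R_3})(t)\|_{L^1\cap L^2}\lesssim\|(\mathcal{n},\bm{u},\mathcal{m})(t)\|_2^2\lesssim(1+t)^{-3/2}\epsilon_3^2$.

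Next I would feed these bounds into the $L^1\cap L^2\to L^2$ linear decay of Proposition~\ref{th3.6} and use the convolution estimate $\int_0^t(1+t-\tau)^{-3/4}(1+\tau)^{-3/2}\,{\rm d}\tau\lesssim(1+t)^{-3/4}$ (splitting at $\tau=t/2$) to obtain $\big\|\int_0^t e^{(t-\tau)\bm{B}}({\rm R_1},{\rm R_2},{\rm R_3})(\tau)\,{\rm d}\tau\big\|\le C\epsilon_3^2(1+t)^{-3/4}$. Combining this with the linear two‑sided bound $a_1(1+t)^{-3/4}\le\|e^{t\bm{B}}\bm{U}_0\|\le a_2(1+t)^{-3/4}$ of Proposition~\ref{Pro9} through the triangle inequality gives $(a_1-C\epsilon_3^2)(1+t)^{-3/4}\le\|(\mathcal{n},\bm{u},\mathcal{m})(t)\|\le(a_2+C\epsilon_3^2)(1+t)^{-3/4}$ for $t\ge1$; the range $0\le t\le1$ is immediate from continuity and positivity of the norm, which is why one sets $a_1'=(a_1-C\epsilon_3^2)/2$ and $a_2'=2(a_2+C\epsilon_3^2)$.

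For the ${\rm div}\bm{q}$ slot I would use the elliptic identity \eqref{divq}. Since the Fourier multiplier $(1-\Delta)^{-1}\Delta$ is bounded, Plancherel's theorem gives $\|{\rm div}\bm{q}(t)\|\le4\|\mathcal{m}(t)\|+\|\mathcal{m}^4+4\mathcal{m}^3+6\mathcal{m}^2\|\lesssim(1+t)^{-3/4}$, using the decay of $\|\mathcal{m}\|$ just established and of $\|\mathcal{m}\|_2$ from Lemma~\ref{nablak} (which makes the quartic/cubic/quadratic term decay strictly faster). This takes care of the upper bound for the combined norm $\|(\mathcal{n},\bm{u},\mathcal{m},{\rm div}\bm{q})\|$, while its lower bound is already supplied by $\|\mathcal{n}\|$ alone.

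The place where the hypotheses are genuinely used --- and the main obstacle --- is making the lower bound non‑trivial, i.e.\ ensuring $a_1-C\epsilon_3^2>0$. Now $a_1$ is strictly positive only when the lower bound of Proposition~\ref{Pro9} is, and tracing that proof shows $a_1^2$ equals, up to a fixed constant, $C'\|\mathcal{n}_0\|_{L^1}^2-c_0'\|(\bm{u}_0,\mathcal{m}_0)\|_{L^1}^2$; hence the assumed inequality $C'\|\mathcal{n}_0\|_{L^1}^2-c_0'\|(\bm{u}_0,\mathcal{m}_0)\|_{L^1}^2>C\epsilon_3^2$ is exactly what is needed to beat the nonlinear error $C\epsilon_3^2$. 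So the delicate point is not any individual estimate but the quantitative comparison of the nonlinear error constant against the sharp linear constant; every other ingredient (linear decay, a priori energy bounds, decay of the $H^2$ norm) is already available from Propositions~\ref{th3.6}, \ref{Pro9} and Lemma~\ref{nablak}.
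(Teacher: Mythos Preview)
Your proposal is correct and follows essentially the same route as the paper: Duhamel's principle, the quadratic bound $\|({\rm R_1},{\rm R_2},{\rm R_3})\|_{L^1\cap L^2}\lesssim\|(\mathcal{n},\bm{u},\mathcal{m})\|_2^2\lesssim(1+t)^{-3/2}\epsilon_3^2$ from Lemma~\ref{nablak}, the convolution estimate, the two-sided linear bound of Proposition~\ref{Pro9} via the triangle inequality, and the elliptic identity \eqref{divq} for ${\rm div}\bm{q}$. Your explicit discussion of why the hypothesis $C'\|\mathcal{n}_0\|_{L^1}^2-c_0'\|(\bm{u}_0,\mathcal{m}_0)\|_{L^1}^2>C\epsilon_3^2$ guarantees $a_1-C\epsilon_3^2>0$, and of the range $0\le t\le1$, is a welcome clarification that the paper leaves implicit.
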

Finally, it is easy to see that Theorem \ref{th1.1} follows from Proposition \ref{existence} and \ref{a priori}; Theorem \ref{nablak1} comes from Lemma \ref{nablak} and Lemma \ref{lemma4.2} indicates Theorem \ref{updown0}.
\bigbreak

\section{Acknowledgements.} Guiqiong Gong was supported by the Fundamental Research Funds for the Central Universities under contract 2682022CX045. The work was supported by the grants from the National Natural Science Foundation of China under contracts 11731008, 11671309 and 11971359. The authors express much gratitude to Professor Huijiang Zhao for his support and his suggestion.\\

\bibliographystyle{abbrvnat}

\bibliography{references}

\begin{thebibliography}{31}
\providecommand{\natexlab}[1]{#1}
\providecommand{\url}[1]{\texttt{#1}}
\expandafter\ifx\csname urlstyle\endcsname\relax
  \providecommand{\doi}[1]{doi: #1}\else
  \providecommand{\doi}{doi: \begingroup \urlstyle{rm}\Url}\fi

\bibitem[Adams(1975)]{MR0450957}
R.~A. Adams.
\newblock \emph{Sobolev spaces}.
\newblock Pure and Applied Mathematics, Vol. 65. Academic Press [Harcourt Brace
  Jovanovich, Publishers], New York-London, 1975.

\bibitem[Deng and Yang(2020)]{MR4085490}
S.~Deng and X.~Yang.
\newblock Pointwise structure of a radiation hydrodynamic model in
  one-dimension.
\newblock \emph{Math. Methods Appl. Sci.}, 43\penalty0 (6):\penalty0
  3432--3456, 2020.
\newblock ISSN 0170-4214.
\newblock \doi{10.1002/mma.6130}.
\newblock URL \url{https://doi.org/10.1002/mma.6130}.

\bibitem[di~Francesco(2007)]{MR2329017}
M.~di~Francesco.
\newblock Initial value problem and relaxation limits of the {H}amer model for
  radiating gases in several space variables.
\newblock \emph{NoDEA Nonlinear Differential Equations Appl.}, 13\penalty0
  (5-6):\penalty0 531--562, 2007.
\newblock ISSN 1021-9722.
\newblock \doi{10.1007/s00030-006-4023-y}.
\newblock URL \url{https://doi.org/10.1007/s00030-006-4023-y}.

\bibitem[Fan et~al.(2019)Fan, Ruan, and Xiang]{MR3906863}
L.~Fan, L.~Ruan, and W.~Xiang.
\newblock Asymptotic stability of a composite wave of two viscous shock waves
  for the one-dimensional radiative {E}uler equations.
\newblock \emph{Ann. Inst. H. Poincar\'{e} C Anal. Non Lin\'{e}aire},
  36\penalty0 (1):\penalty0 1--25, 2019.
\newblock ISSN 0294-1449.
\newblock \doi{10.1016/j.anihpc.2018.03.008}.
\newblock URL \url{https://doi.org/10.1016/j.anihpc.2018.03.008}.

\bibitem[Gao and Zhu(2008)]{MR2402882}
W.~Gao and C.~Zhu.
\newblock Asymptotic decay toward the planar rarefaction waves for a model
  system of the radiating gas in two dimensions.
\newblock \emph{Math. Models Methods Appl. Sci.}, 18\penalty0 (4):\penalty0
  511--541, 2008.
\newblock ISSN 0218-2025.
\newblock \doi{10.1142/S0218202508002760}.
\newblock URL \url{https://doi.org/10.1142/S0218202508002760}.

\bibitem[Gao et~al.(2008)Gao, Ruan, and Zhu]{MR2414408}
W.~Gao, L.~Ruan, and C.~Zhu.
\newblock Decay rates to the planar rarefaction waves for a model system of the
  radiating gas in {$n$} dimensions.
\newblock \emph{J. Differential Equations}, 244\penalty0 (10):\penalty0
  2614--2640, 2008.
\newblock ISSN 0022-0396.
\newblock \doi{10.1016/j.jde.2008.02.023}.
\newblock URL \url{https://doi.org/10.1016/j.jde.2008.02.023}.

\bibitem[Hong(2017)]{MR3595322}
H.~Hong.
\newblock Asymptotic behavior toward the combination of contact discontinuity
  with rarefaction waves for 1-{D} compressible viscous gas with radiation.
\newblock \emph{Nonlinear Anal. Real World Appl.}, 35:\penalty0 175--199, 2017.
\newblock ISSN 1468-1218.
\newblock \doi{10.1016/j.nonrwa.2016.07.005}.
\newblock URL \url{https://doi.org/10.1016/j.nonrwa.2016.07.005}.

\bibitem[Huang and Zhang(2019)]{MR3887119}
B.~Huang and L.~Zhang.
\newblock Asymptotic stability of planar rarefaction wave to 3{D} radiative
  hydrodynamics.
\newblock \emph{Nonlinear Anal. Real World Appl.}, 46:\penalty0 43--57, 2019.
\newblock ISSN 1468-1218.
\newblock \doi{10.1016/j.nonrwa.2018.09.003}.
\newblock URL \url{https://doi.org/10.1016/j.nonrwa.2018.09.003}.

\bibitem[Kawashima(1984)]{1984Systems}
S.~Kawashima.
\newblock Systems of a hyperbolic-parabolic composite type, with applications
  to the equations of magnetohydrodynamics.
\newblock \emph{Doctoral Thesis Kyoto Univ}, 1984.

\bibitem[Kawashima et~al.(1999)Kawashima, Nikkuni, and Nishibata]{MR1679939}
S.~Kawashima, Y.~Nikkuni, and S.~Nishibata.
\newblock The initial value problem for hyperbolic-elliptic coupled systems and
  applications to radiation hydrodynamics.
\newblock In \emph{Analysis of systems of conservation laws ({A}achen, 1997)},
  volume~99 of \emph{Chapman \& Hall/CRC Monogr. Surv. Pure Appl. Math.}, pages
  87--127. Chapman \& Hall/CRC, Boca Raton, FL, 1999.

\bibitem[Kawashima et~al.(2003)Kawashima, Nikkuni, and Nishibata]{MR2022134}
S.~Kawashima, Y.~Nikkuni, and S.~Nishibata.
\newblock Large-time behavior of solutions to hyperbolic-elliptic coupled
  systems.
\newblock \emph{Arch. Ration. Mech. Anal.}, 170\penalty0 (4):\penalty0
  297--329, 2003.
\newblock ISSN 0003-9527.
\newblock \doi{10.1007/s00205-003-0273-6}.
\newblock URL \url{https://doi.org/10.1007/s00205-003-0273-6}.

\bibitem[Lattanzio et~al.(2007)Lattanzio, Mascia, and Serre]{MR2360621}
C.~Lattanzio, C.~Mascia, and D.~Serre.
\newblock Shock waves for radiative hyperbolic-elliptic systems.
\newblock \emph{Indiana Univ. Math. J.}, 56\penalty0 (5):\penalty0 2601--2640,
  2007.
\newblock ISSN 0022-2518.
\newblock \doi{10.1512/iumj.2007.56.3043}.
\newblock URL \url{https://doi.org/10.1512/iumj.2007.56.3043}.

\bibitem[Li(2005)]{MR2164944}
D.~L. Li.
\newblock The {G}reen's function of the {N}avier-{S}tokes equations for gas
  dynamics in {$\Bbb R^3$}.
\newblock \emph{Comm. Math. Phys.}, 257\penalty0 (3):\penalty0 579--619, 2005.
\newblock ISSN 0010-3616.
\newblock \doi{10.1007/s00220-005-1351-4}.
\newblock URL \url{https://doi.org/10.1007/s00220-005-1351-4}.

\bibitem[Li et~al.(2016)Li, Yang, and Zhong]{MR3498181}
H.-L. Li, T.~Yang, and M.~Zhong.
\newblock Spectrum analysis and optimal decay rates of the bipolar
  {V}lasov-{P}oisson-{B}oltzmann equations.
\newblock \emph{Indiana Univ. Math. J.}, 65\penalty0 (2):\penalty0 665--725,
  2016.
\newblock ISSN 0022-2518.
\newblock \doi{10.1512/iumj.2016.65.5730}.
\newblock URL \url{https://doi.org/10.1512/iumj.2016.65.5730}.

\bibitem[Li and Wang()]{MR20220425}
S.~Li and J.~Wang.
\newblock Formation of singularities of solutions to a 1d compressible
  radiation hydrodynamics model.
\newblock \emph{preprint}.

\bibitem[Lin(2011)]{MR2836843}
C.~Lin.
\newblock Asymptotic stability of rarefaction waves in radiative hydrodynamics.
\newblock \emph{Commun. Math. Sci.}, 9\penalty0 (1):\penalty0 207--223, 2011.
\newblock ISSN 1539-6746.
\newblock URL \url{http://projecteuclid.org/euclid.cms/1294170332}.

\bibitem[Lin and Goudon(2011)]{MR2820208}
C.~Lin and T.~Goudon.
\newblock Global existence of the equilibrium diffusion model in radiative
  hydrodynamics.
\newblock \emph{Chinese Ann. Math. Ser. B}, 32\penalty0 (4):\penalty0 549--568,
  2011.
\newblock ISSN 0252-9599.
\newblock \doi{10.1007/s11401-011-0658-z}.
\newblock URL \url{https://doi.org/10.1007/s11401-011-0658-z}.

\bibitem[Lin et~al.(2007)Lin, Coulombel, and Goudon]{MR2371479}
C.~Lin, J.-F. Coulombel, and T.~Goudon.
\newblock Asymptotic stability of shock profiles in radiative hydrodynamics.
\newblock \emph{C. R. Math. Acad. Sci. Paris}, 345\penalty0 (11):\penalty0
  625--628, 2007.
\newblock ISSN 1631-073X.
\newblock \doi{10.1016/j.crma.2007.10.029}.
\newblock URL \url{https://doi.org/10.1016/j.crma.2007.10.029}.

\bibitem[Liu and Zhang(2016)]{MR3473451}
Q.~Liu and P.~Zhang.
\newblock Optimal time decay of the compressible micropolar fluids.
\newblock \emph{J. Differential Equations}, 260\penalty0 (10):\penalty0
  7634--7661, 2016.
\newblock ISSN 0022-0396.
\newblock \doi{10.1016/j.jde.2016.01.037}.
\newblock URL \url{https://doi.org/10.1016/j.jde.2016.01.037}.

\bibitem[Rohde et~al.(2013)Rohde, Wang, and Xie]{MR3015674}
C.~Rohde, W.~Wang, and F.~Xie.
\newblock Hyperbolic-hyperbolic relaxation limit for a 1{D} compressible
  radiation hydrodynamics model: superposition of rarefaction and contact
  waves.
\newblock \emph{Commun. Pure Appl. Anal.}, 12\penalty0 (5):\penalty0
  2145--2171, 2013.
\newblock ISSN 1534-0392.
\newblock \doi{10.3934/cpaa.2013.12.2145}.
\newblock URL \url{https://doi.org/10.3934/cpaa.2013.12.2145}.

\bibitem[Strauss(1968)]{MR0233062}
W.~A. Strauss.
\newblock Decay and asymptotics for {$ cmu=F(u)$}.
\newblock \emph{J. Functional Analysis}, 2:\penalty0 409--457, 1968.
\newblock \doi{10.1016/0022-1236(68)90004-9}.
\newblock URL \url{https://doi.org/10.1016/0022-1236(68)90004-9}.

\bibitem[Vol'Pert and Hudjaev(2007)]{2007On}
A.~I. Vol'Pert and S.~I. Hudjaev.
\newblock On the cauchy problem for composite systems of nonlinear differential
  equations.
\newblock \emph{Mathematics of the USSR-Sbornik}, 16\penalty0 (4):\penalty0
  517, 2007.

\bibitem[Wan and Wu(2019)]{MR3989198}
L.~Wan and L.-X. Wu.
\newblock Global symmetric solutions for a multi-dimensional compressible
  viscous gas with radiation in exterior domains.
\newblock \emph{Z. Angew. Math. Phys.}, 70\penalty0 (4):\penalty0 Paper No.
  130, 22, 2019.
\newblock ISSN 0044-2275.
\newblock \doi{10.1007/s00033-019-1171-6}.
\newblock URL \url{https://doi.org/10.1007/s00033-019-1171-6}.

\bibitem[Wang and Xie(2011{\natexlab{a}})]{MR2812581}
J.~Wang and F.~Xie.
\newblock Asymptotic stability of viscous contact wave for the 1{D} radiation
  hydrodynamics system.
\newblock \emph{J. Differential Equations}, 251\penalty0 (4-5):\penalty0
  1030--1055, 2011{\natexlab{a}}.
\newblock ISSN 0022-0396.
\newblock \doi{10.1016/j.jde.2011.03.011}.
\newblock URL \url{https://doi.org/10.1016/j.jde.2011.03.011}.

\bibitem[Wang and Wang(2009)]{MR2527538}
W.~Wang and W.~Wang.
\newblock The pointwise estimates of solutions for a model system of the
  radiating gas in multi-dimensions.
\newblock \emph{Nonlinear Anal.}, 71\penalty0 (3-4):\penalty0 1180--1195, 2009.
\newblock ISSN 0362-546X.
\newblock \doi{10.1016/j.na.2008.11.050}.
\newblock URL \url{https://doi.org/10.1016/j.na.2008.11.050}.

\bibitem[Wang and Xie(2011{\natexlab{b}})]{MR2815767}
W.~Wang and F.~Xie.
\newblock The initial value problem for a multi-dimensional radiation
  hydrodynamics model with viscosity.
\newblock \emph{Math. Methods Appl. Sci.}, 34\penalty0 (7):\penalty0 776--791,
  2011{\natexlab{b}}.
\newblock ISSN 0170-4214.
\newblock \doi{10.1002/mma.1398}.
\newblock URL \url{https://doi.org/10.1002/mma.1398}.

\bibitem[Wu and Wang(2018)]{MR3832845}
Z.~Wu and W.~Wang.
\newblock Green's function and pointwise estimate for a generalized
  {P}oisson-{N}ernst-{P}lanck-{N}avier-{S}tokes model in dimension three.
\newblock \emph{ZAMM Z. Angew. Math. Mech.}, 98\penalty0 (7):\penalty0
  1066--1085, 2018.
\newblock ISSN 0044-2267.
\newblock \doi{10.1002/zamm.201700109}.
\newblock URL \url{https://doi.org/10.1002/zamm.201700109}.

\bibitem[Xie(2012)]{MR2873128}
F.~Xie.
\newblock Nonlinear stability of combination of viscous contact wave with
  rarefaction waves for a 1{D} radiation hydrodynamics model.
\newblock \emph{Discrete Contin. Dyn. Syst. Ser. B}, 17\penalty0 (3):\penalty0
  1075--1100, 2012.
\newblock ISSN 1531-3492.
\newblock \doi{10.3934/dcdsb.2012.17.1075}.
\newblock URL \url{https://doi.org/10.3934/dcdsb.2012.17.1075}.

\bibitem[Zhang et~al.(2011)Zhang, Li, and Zhu]{MR2737817}
G.~Zhang, H.-L. Li, and C.~Zhu.
\newblock Optimal decay rate of the non-isentropic compressible
  {N}avier-{S}tokes-{P}oisson system in {$\Bbb R^3$}.
\newblock \emph{J. Differential Equations}, 250\penalty0 (2):\penalty0
  866--891, 2011.
\newblock ISSN 0022-0396.
\newblock \doi{10.1016/j.jde.2010.07.035}.
\newblock URL \url{https://doi.org/10.1016/j.jde.2010.07.035}.

\bibitem[Zhang and Zhao(2022)]{2204.04760}
J.~Zhang and H.~Zhao.
\newblock Global regularity for a radiation hydrodynamics model with viscosity
  and thermal conductivity, 2022.

\bibitem[Zhu(2020)]{MR4085632}
B.~Zhu.
\newblock Global symmetric classical solutions for radiative compressible
  {N}avier-{S}tokes equations with temperature-dependent viscosity
  coefficients.
\newblock \emph{Math. Methods Appl. Sci.}, 43\penalty0 (7):\penalty0
  4428--4459, 2020.
\newblock ISSN 0170-4214.
\newblock \doi{10.1002/mma.6207}.
\newblock URL \url{https://doi.org/10.1002/mma.6207}.

\end{thebibliography}

\end{document}